\newtheorem{thm}{Theorem}[section]
\newtheorem{lem}[thm]{Lemma}
\newtheorem{prop}[thm]{Proposition}
\newtheorem{cor}[thm]{Corollary}
\theoremstyle{definition}
\newtheorem{defn}{Definition}[section]
\newtheorem{ex}[thm]{Example}
\newtheorem{rem}[thm]{Remarks}
\newtheorem{rmk}{Remark}[section]
\newtheorem{notat}[defn]{Notation}
\DeclareMathOperator{\D}{D}
\DeclareMathOperator{\End}{End}
\DeclareMathOperator{\Hom}{Hom}
\DeclareMathOperator{\Ker}{Ker}
\DeclareMathOperator{\Coker}{Coker}
\DeclareMathOperator{\Image}{Im}
\DeclareMathOperator{\sem}{\!-ss}
\DeclareMathOperator{\proj}{\!-proj}
\DeclareMathOperator{\inj}{\!-inj}
\DeclareMathOperator{\add}{add}
\DeclareMathOperator{\spaces}{\!-sp}
\DeclareMathOperator{\modules}{\!-mod}
\DeclareMathOperator{\Modules}{\!-Mod}
\DeclareMathOperator{\res}{res}
\DeclareMathOperator{\ind}{ind}
\DeclareMathOperator{\coind}{coind}
\DeclareMathOperator{\op}{^{\!op}}
\DeclareMathOperator{\Soc}{soc}
\DeclareMathOperator{\st}{\,|\,}
\begin{document}
\title[projectivization]{Adjoint Functors, Projectivization, and Differentiation Algorithms for Representations of Partially Ordered Sets}
\author{Mark Kleiner and Markus Reitenbach}
\address{Department of Mathematics, Syracuse University, Syracuse, NY 13244}
\email{mkleiner@syr.edu}
\address{Department of Mathematics and Statistics, Colorado Mesa University, Grand Junction, CO 81501}
\email{mreitenb@coloradomesa.edu}

\keywords{Partially ordered set; Representation; Differentiation algorithm; Adjoint functors;
Projectivization}
\subjclass[2000]{Primary 16G20}
\begin{abstract}
Adjoint functors and projectivization in representation theory of partially ordered sets are used to generalize the algorithms of differentiation by a maximal and by a minimal point.  Conceptual explanations are given for the combinatorial construction of the derived set and for the differentiation functor.\end{abstract}
\maketitle

\section{Introduction}\label{intro}
Throughout this paper $S$ is a finite partially ordered set (poset), and $k$ is a field.

An $S$-{\it space}  is a family $\mathbf V=\bigl(V, V(s)\bigr)_{s\in S},$ where $V$ is a finite dimensional $k$-vector space, $V(s)$ is a subspace of $V$ for each $s,$ and  $s\leq t$ in $S$ implies $V(s)\subseteq V(t).$ Here $V$ is called the {\em ambient space} of  $\mathbf V,$ and the $V(s)$ are called the {\em subspaces.}  One defines a morphism and a direct sum of $S$-spaces in a natural way.  An $S$-space is {\em indecomposable} if it is not isomorphic to a direct sum of two nonzero $S$-spaces. We denote by $S\spaces$ the category of $S$-spaces.  Introduced by Gabriel in ~\cite{G},  it is closely related to the category of {\em representations} of the poset $S$ originally introduced by Nazarova and Roiter ~\cite{NR}. Because of this close relationship, $S$-spaces are often also called representations of $S.$

The theory of $S$-spaces, and of representations of posets, has had many applications in the study of representations of finite-dimensional algebras, lattices over orders, and in other areas of mathematics ~\cite{GR,R,S}.  The first, and still most important, technical tool for the study of $S$-spaces themselves was provided by the so-called {\em differentiation algorithms}.  Given a poset $S,$ one constructs in a purely combinatorial way a {\em derived poset} $S'$  with the property that the categories $S\spaces$ and $S'\spaces$ are \lq\lq closely related,"  where the meaning of the latter depends on the problem one intends to solve.  For instance, if the problem is to determine whether $S\spaces$ is of {\em finite representation type}, i.e., has only finitely many nonisomorphic indecomposable objects, then  the two categories are closely related if there is a bijection, up to a finite number of elements, between the sets of isomorphism classes of indecomposable $S$-spaces and $S'$-spaces.  One then iterates the procedure and considers  a sequence of derived posets with the goal of obtaining an $m$-th derived poset $S^{(m)}$ for which the category $S^{(m)}\spaces$ is well understood, so that the problem at hand is easy to solve for $S^{(m)}.$  The obtained solution then also holds for the original poset $S.$  

The advantage of this method is that one replaces a difficult study of $S$-spaces with an easier study (often!) of the combinatorics of differentiations. The hard part is the justification of the algorithm, which must rely on the properties of $S$-spaces.

The aim of this paper is to generalize the algorithms of differentiation with respect to a {\em maximal element} of $S,$ introduced by Nazarova and Roiter in ~\cite{NR}, and with respect to a {\em minimal element} of $S,$ introduced by Zavadskij (see ~\cite{N}),  and to give a conceptual explanation of the two algorithms and their generalizations.  Both algorithms  involve a combinatorial construction of the derived poset $S'$ from $S,$ as well as a construction of a functor $S\spaces\to S'\spaces,$ the {\em differentiation functor,} that has nice properties.  The existing descriptions of both algorithms do not explain where the combinatorial construction comes from and present the differentiation functor as an ad hoc computational procedure.  

We introduce the algorithms of differentiation with respect to a  {\em principal filter} and to a {\em principal ideal} of $S$ that generalize those with respect to a minimal element and to a maximal element, respectively, and show for both generalizations that the differentiation functor is a composition of three functors, two of which are analogs of the {\em restriction} and {\em induction} functors from the representation theory of finite groups and the third is  a straightforward reduction of the size of the ambient space.  We also show for both algorithms that  the combinatorial construction of the derived poset $S'$ from the given poset $S$ is imposed on us by the fact that the projectivization procedure due to Auslander (see ~\cite{ARS}) is an ingredient of the differentiation functor.

In Section \ref{prel},  we review  combinatorics of posets and general properties of the category $S\spaces$ (see ~\cite{G,G1,GR}).  Unless $S=\emptyset,$ the category $S\spaces$ is not abelian but it has an {\em exact structure} based on the notion of a {\em proper} morphism, to be defined later, and it has enough (relatively) projectives and injectives.    If $R$ is a subset of a poset $S,$ the restriction functor $\res^S_R$ applied to an $S$-space preserves the ambient space and \lq\lq forgets" the subspaces associated with the elements of $S\setminus R.$  The induction functor $\ind^S_R$ is a left adjoint, and the  {\em coinduction} functor $\coind^S_R$ is a right adjoint, of $\res^S_R.$  In ~\cite{S}, the induction and coinduction are called the lower and upper induction, respectively. Recall that if $G$ is a finite group  with a subgroup $H,$  the induction functor $\ind^G_H$ is both a left and right adjoint of the restriction functor $\res^G_H.$  In addition to reviewing known facts, we present new  results about the restriction, induction, and coinduction that play a crucial role in the rest of the paper. In particular,  although the restriction functor $\res^S_R$ generally is not full, it satisfies a weaker but still useful condition   provided $R$ is either an ideal or a filter of $S;$ the condition seems interesting on its own.  In this section we also review an equivalence between the category $S\spaces$ and the category of finitely generated socle-projective modules over the incidence algebra of  the enlargement of $S$  by a unique maximal element.

Although relatively projective and relatively injective objects in various categories have been studied extensively, relatively semisimple objects seem to be less popular.  In this paper, relatively semisimple objects in the category $S\spaces$ play an important role, and we study them in Section \ref{onedim}. We say that a nonzero $S$-space $\mathbf V$ is {\em (relatively) simple} if every  nonzero proper monomorphism $\mathbf U\to \mathbf V$ in $S\spaces$ is an isomorphism,   an $S$-space is  {\em (relatively) semisimple} if it is isomorphic to a direct sum of simple $S$-spaces, and we denote by $S\sem$ the full subcategory of $S\spaces$ determined by the semisimple $S$-spaces;  simple and semisimple $S$-spaces are called sp-simple and sp-semisimple, respectively, in  ~\cite{S}.  We recall that  an $S$-space is simple if and only if its ambient space is one-dimensional, and that there is a bijection between the set of isomorphism  classes of simple $S$-spaces and the set ${\mathcal A}(S)$  of {\em antichains} of $S,$ where an antichain is a subset of $S$ that contains no two distinct comparable elements.  If $A,B\in \mathcal A(S)$ and $k_A,k_B$ are representatives of the corresponding isomorphism classes of simple $S$-spaces, we write $A\le B$ if there exists a nonzero  morphism $k_B\to k_A,$  which turns  $\mathcal A(S)$ into a poset that contains $S$ and whose unique maximal element is the empty antichain; we denote the poset by $\hat{\mathcal A}(S).$  Let $\mathcal U$ be the direct sum of a complete set of representatives of the isomorphism classes of simple $S$-spaces.  We prove that the incidence algebra $k\hat{\mathcal A}(S)$ is the opposite of the endomorphism ring of $\mathcal U.$  

Section \ref{pr} deals with projectivization.  Since $\mathcal U$ is an additive generator of $S\sem,$ the representable functor  $\Hom_{S\spaces}(\mathcal U,\underline{\phantom{x}})$ induces an equivalence between the category $S\sem$ and the category of finitely generated projective $k\hat{\mathcal A}(S)$-modules.  Denote by $\hat a(S)$ the set of nonempty antichains of $S.$ Then  $\hat{\mathcal A}(S)$ is the enlargement of $\hat a(S)$ by a unique maximal element, and the category  of socle-projective $k\hat{\mathcal A}(S)$-modules is equivalent to the category $\hat a(S)\spaces.$  Composing the two equivalences, we obtain that the category $S\sem$ is equivalent to the category of projective $\hat a(S)$-spaces.   In particular, if the {\em width} of $S,\ w(S),$ does not exceed two, where the width of a poset is the largest possible cardinality of an antichain in it, the category $S\spaces$ is equivalent to the category of projective $\hat a(S)$-spaces because $S\spaces=S\sem$ if and only if $w(S)\le2.$  The latter equivalence is an analog of the following well-known fact about representations of algebras.  If $\Lambda$ is an artin algebra of finite representation type and $\Gamma$ is its Auslander algebra, the category of finitely generated $\Lambda$-modules is equivalent to the category of finitely generated projective $\Gamma$-modules.   We finish the section by proving for categories of $S$-spaces a more general version of the above equivalence.  If $T$ is a subposet of width $\le2$ of a poset $S$  but no assumption on $w(S)$ is made,  for a suitable poset $P$ the functor $\coind^P_S$ induces an equivalence between the category $S\spaces$ and the full subcategory of $P\spaces$ determined by the $P$-spaces $\mathbf V$ for which $\res^P_{\hat a(T)}\mathbf V$ is a projective $\hat a(T)$-space.  This shows that the poset $\hat a(T),$ which is  the main ingredient of the combinatorial construction of the derived poset $S'$ from $S,$ comes from projectivization.  Using the contravariant representable functor $\Hom_{S\spaces}(\underline{\phantom{x}},\mathcal U),$ we prove that for a suitable poset $Q$ the functor $\ind^Q_S$  induces an equivalence between the category $S\spaces$ and the full subcategory of $Q\spaces$ determined by the $Q$-spaces $\mathbf V$ for which $\res^Q_{\check a(T)}\mathbf V$ is an injective $\check a(T)$-space.  Here $\check a(T)$ is the set of nonempty antichains of $T$ with a partial order different from that of $\hat a(T).$

Finally, Section 5 presents the construction and justification of the differentiation algorithms with respect to a principal filter and to a principal ideal.  It begins with a description of a functor that we characterized earlier as a straightforward reduction of the size of the ambient space. Let  $\mathbf V=\bigl(V, V(s)\bigr)_{s\in S}$  be an arbitrary $S$-space. For any  subspace $U$ of the ambient space $V,$ one can construct two  $S$-spaces in an obvious way: one with the ambient space $U$ and subspaces $V(s)\cap U,\ s\in S,$  the other with the ambient space $V/U$ and subspaces $\bigl(V(s)+U\bigr)/U,\ s\in S.$  If $p\in S$ is fixed and $U=V(p),$   both constructions become functorial in $\mathbf V,$ thus giving rise to two endofunctors of $S\spaces,$ $E^p$ and $E_p,$ respectively. 

For any $p\in S,$ the subset $\langle p\rangle=\{s\in S\,|\,p\le s\}$ is called the {\em principal filter} of $S$ generated by $p.$   We set $S_{\langle p\rangle}=\langle p\rangle\cup \hat a\bigl(S\setminus \langle p\rangle\bigr)$ and $S_p=S_{\langle p\rangle}\setminus (p)$  where $(p)=\{t\in S_{\langle p\rangle}\,|\,t\le p\}$ is the {\em principal ideal} of $S_{\langle p\rangle}$ generated by $p.$  When $w(S\setminus \langle p\rangle)\le2,$ we construct the differentiation functor $\res^{S_{\langle p\rangle}}_{S_p}E_p\coind^{S_{\langle p\rangle}}_S:S\spaces\to S_p\spaces$  with respect to the principal filter $\langle p\rangle.$  If $p$ is a minimal element of $S,$ our formula agrees with the known one.  Similarly, we construct a differentiation functor with respect to the principal ideal of $S$ generated by $p.$  The proofs use properties of  adjoint functors specialized to restriction, induction, and coinduction, as well as the existence and properties of projective covers and injective envelopes in $S\spaces.$ 

We end the introduction by reminding the reader that many authors have studied and applied, and continue to study, apply, and generalize, various differentiation algorithms for representations of posets.  For example, Zavadskij introduced the two-point differentiation algorithm ~\cite{Z1} motivated by his joint work with Kirichenko ~\cite{ZKi} on integral representation theory, and he recently came up with a kind of differentiation algorithm defined in a rather general context of poset representations ~\cite{Z2}; Bautista and Simson ~\cite{BS} obtained a generalized differentiation algorithm for a certain class of rings; Arnold in ~\cite{A} and jointly with Simson in ~\cite{AS} showed that poset representations and differentiation algorithms have useful applications in the study of subcategories of the category of abelian groups; Rump ~\cite{Ru1,Ru2,Ru3} extended differentiation algorithms to lattices over orders and modules over artinian rings, etc.

The authors are grateful to the referees for their useful comments.

\section{Preliminaries}\label{prel} 

\subsection{Filters, ideals, and antichains} We recall several facts, mostly without proof, needed in the sequel; see ~\cite{E}.

Throughout the paper,  $S^{\mathrm{op}}=(S,\underset{\mathrm{op}}\le)$ stands for the opposite poset of $S=(S,\le)$  where $a\underset{\mathrm{op}}\le b$ if and only if $b\le a$,  for all $a,b\in S$.  If $T$ is a subset of $S$, we always view it as a poset $T=(T,\le)$ with respect to the same partial order. 

A subset $F$ of $S$ is a {\it  filter} if, for all $s\in S$, we have that $t\in F$ and $t\le s$ imply $s\in F.$ A subset $I$ of $S$ is an {\it ideal}   if, for all $s\in S$, we have that $t\in I$ and $s\le t$ imply $s\in I.$   Of course, $F$ is a filter if and only if $S\setminus F$ is an ideal. If $T$ is a subset of $S,$ then $\langle T\rangle$  is the filter {\em generated} by $T,$ that is the intersection of all filters of $S$ containing $T.$  The ideal $(T)$ generated by $T$ is defined similarly.  If $p\in S,$ then $\langle p\rangle $ (respectively, $(p)$) denotes the {\em principal filter} (respectively, {\em principal ideal}) of $S$ generated by $p$, i.e.,  $\langle p\rangle=\{s\in S\,|\,p\le s\}$ and $(p)=\{s\in S\,|\,s\le p\}.$   We denote by $\mathcal F(S)$ the set of filters of $S$, and by $\mathcal I(S)$ the set of ideals of $S$. 

We will later use  the following easily verifiable statement.
\begin{prop}\label{prel.5}  Let $S$ be a poset with a subset $T$.
 \begin{itemize}
\item[(a)]  If $F$ is a filter of $S$, then $ F\cap T$ is a filter of $T$.
\item[(b)] If $I$ is an ideal of $S$, then $I\cap T$ is an ideal of $T$. 
\end{itemize}
\end{prop}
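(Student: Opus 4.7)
The plan is to verify both parts directly from the definitions, since the partial order on $T$ is by hypothesis the restriction of that on $S$, so inequalities in $T$ are exactly inequalities in $S$ between elements of $T$. This makes the two claims almost tautological.

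For part (a), I would take an arbitrary $t \in F \cap T$ together with an $s \in T$ satisfying $t \leq s$, and show $s \in F \cap T$. The relation $t \leq s$ holds in $S$ as well, and $t \in F$, so the filter property of $F$ in $S$ yields $s \in F$; combined with the hypothesis $s \in T$ this gives $s \in F \cap T$, which is exactly the filter condition for $F \cap T$ inside $T$.

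For part (b), the cleanest route is to invoke duality: by definition $I$ is an ideal of $S$ if and only if $I$ is a filter of $S^{\mathrm{op}}$, and likewise $I \cap T$ is an ideal of $T$ if and only if it is a filter of $T^{\mathrm{op}} = (S^{\mathrm{op}})|_T$. Since $T^{\mathrm{op}}$ has the partial order inherited from $S^{\mathrm{op}}$, part (a) applied to the poset $S^{\mathrm{op}}$ with subset $T$ and filter $I$ gives the conclusion. Alternatively, one repeats the argument of (a) with all inequalities reversed, which is equally short.

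The main obstacle is essentially notational rather than mathematical: one must be careful that the symbol $\leq$ has the same meaning in $T$ as in $S$, which is guaranteed by the convention stated at the beginning of Section \ref{prel} that a subset of $S$ is always viewed as a poset under the same partial order. Given this convention, there is nothing substantive to verify beyond chasing the definitions, so no real difficulty arises.
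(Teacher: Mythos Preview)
Your proposal is correct. The paper does not actually give a proof of this proposition, introducing it only as an ``easily verifiable statement,'' and your direct verification from the definitions (together with the duality shortcut for part~(b)) is exactly the routine check the authors have in mind.
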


Recall that a subset $A$ of $S$ is an {\it antichain} if no two distinct elements of $A$ are comparable.  
We denote by $\mathcal A(S)$ the set of antichains, and by $a(S)$ the set of nonempty antichains, of $S$  so that ${\mathcal A}(S)= a(S)\cup\{\emptyset\}.$  The {\em width} of $S,\ w(S),$ is the largest possible cardinality of an  antichain in $S.$  For all subsets $T$ of $S$, $\min T$ (respectively, $\max T$) denotes the set of minimal (respectively, maximal) elements of $T;$ clearly, $\min T,\,\max T\in\mathcal A(S).$ For $a,b\in S$, an element $a\wedge b\in S$ is the {\it meet} of $a$ and $b$ if, for all $s\in S$, $s\le a$ and $s\le b$ imply $s\le a\wedge b.$ An element  $a\vee b\in S$ is the {\it join} of $a$ and $b$ if, for all $s\in S$, $a\le s$ and $b\le s$ imply $a\vee b\le s$.  A poset $S$ is a {\it meet-semilattice}  ({\it join-semilattice}) if the meet (join) exists for every two elements of $S.$

The following two propositions  relating the sets $\mathcal A(S)$, $\mathcal F(S)$, and $\mathcal I(S)$ are well known, and we present them without proof.

\begin{prop}\label{prel1}
Let $S$ be a poset.
\begin{itemize}
\item[(a)] The functions $\mathcal F(S)\to\mathcal A(S)$ given by $F\mapsto\min F$ for all $F\in\mathcal F(S)$, and $\mathcal A(S)\to\mathcal F(S)$ given by $A\mapsto\langle A\rangle$ for all $A\in\mathcal A(S)$ are mutually inverse bijections.
\item[(b)] Let $F,G\in\mathcal F(S)$ and let $\min F=\{a_1,\dots,a_m\}$, $\min G=\{b_1,\dots,b_n\},\ m,n\ge0$. Then $F\supseteq G$ if and only if for all $j$, there exists an $i$ satisfying $a_i\le b_j$.
\item[(c)] For all $\{a_1,\dots,a_m\},\{b_1,\dots,b_n\}$ in $\mathcal A(S)$, set $\{a_1,\dots,a_m\}\le\{b_1,\dots,b_n\}$ if and only if for all $j$, there exists an $i$ satisfying $a_i\le b_j$.  Then:
\begin{itemize} 
\item [(i)] $(\mathcal A(S),\le)$ is a meet-semilattice where $A\wedge B=\min\{A\cup B\}$ for all $A, B\in\mathcal A(S)$.
\item[(ii)] $\emptyset$ is a unique maximal element and $\min S$ is a unique minimal element  of $\mathcal A(S)$.
\item[(iii)] For $s,t\in S$ we have $\{s\}\le\{t\}$ in $\mathcal A(S)$ if and only if $s\le t$ in $S$.
\item[(iv)] For $m>0$, $\{a_1,\dots,a_m\}$ is the meet of $\{a_1\},\dots,\{a_m\}$.
\end{itemize}
\end{itemize}
\end{prop}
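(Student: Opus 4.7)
The plan is to base everything on the explicit description
\[
\langle A\rangle=\{\,s\in S\st \exists\, a\in A\text{ with }a\le s\,\},
\]
which holds because the right-hand side is visibly a filter containing $A$ and is contained in every filter containing $A$. Once (a) is proved, parts (b) and (c) will follow essentially by transport along the bijection.

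For (a), first observe that $\min F$ is an antichain by definition of minimality, and that $\langle A\rangle$ is a filter as the intersection of all filters containing $A$. To see $\min\langle A\rangle=A$: given $a\in A$ and $b\in\langle A\rangle$ with $b\le a$, the explicit description yields $a'\in A$ with $a'\le b\le a$, and the antichain property of $A$ forces $a'=a=b$, so $a\in\min\langle A\rangle$; conversely any $s\in\min\langle A\rangle$ satisfies $a\le s$ for some $a\in A\subseteq\langle A\rangle$, and minimality of $s$ in $\langle A\rangle$ forces $a=s\in A$. To see $\langle\min F\rangle=F$: the inclusion $\subseteq$ is immediate since $F$ itself is a filter containing $\min F$; for $\supseteq$, given $x\in F$, finiteness of $S$ lets me pick a minimal element $y$ of the nonempty finite set $\{\,z\in F\st z\le x\,\}$, which is then necessarily minimal in $F$ (any further descent inside $F$ would stay inside this set), whence $y\in\min F$ and $x\in\langle\min F\rangle$. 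This finiteness step is the only genuinely nontrivial point in the whole proof; everything else is formal.

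For (b), the identity $G=\langle\min G\rangle$ from (a) lets me rewrite $F\supseteq G$ as $\min G\subseteq F$, and the explicit description of $F=\langle\min F\rangle$ translates this directly into the stated condition that every $b_j$ lies above some $a_i$. For (c), combining (a) and (b) shows that the relation $A\le B$ coincides with $\langle A\rangle\supseteq\langle B\rangle$, so the maps of (a) constitute an order-reversing bijection between $(\mathcal A(S),\le)$ and $(\mathcal F(S),\subseteq)$; in particular $\le$ is a partial order. Since a union of filters is a filter and the explicit description gives $\langle A\cup B\rangle=\langle A\rangle\cup\langle B\rangle$, the greatest lower bound of $A,B$ under $\le$ corresponds under the bijection to the smallest filter containing both $\langle A\rangle$ and $\langle B\rangle$, namely $\langle A\cup B\rangle$; a brief check via the explicit description shows $\min\langle A\cup B\rangle=\min(A\cup B)$, yielding $A\wedge B=\min(A\cup B)$. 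The extremal cases in (ii) follow because $\emptyset$ corresponds to the empty filter (the smallest filter, hence the largest antichain under $\le$) and $\min S$ corresponds to $S$ itself (the largest filter, hence the smallest antichain). Statement (iii) is immediate from the definition of $\le$ specialised to singletons, and (iv) is the meet formula iterated, since $\{a_1,\ldots,a_m\}=\min(\{a_1\}\cup\cdots\cup\{a_m\})$.
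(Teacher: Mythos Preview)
Your proof is correct. The paper itself does not prove this proposition: it introduces Propositions~\ref{prel1} and~\ref{prel3} with the sentence ``The following two propositions relating the sets $\mathcal A(S)$, $\mathcal F(S)$, and $\mathcal I(S)$ are well known, and we present them without proof.'' So there is no approach to compare against; you have supplied the omitted details.

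A couple of small remarks on presentation. In your argument for the meet in (c)(i), the key step $\min\langle A\cup B\rangle=\min(A\cup B)$ deserves one more line than ``a brief check,'' since $A\cup B$ need not be an antichain and so the identity $\min\langle C\rangle=C$ from (a) does not apply directly; the verification is easy (any element of $\min\langle A\cup B\rangle$ lies above some $c\in A\cup B$, hence equals $c$, and conversely), but it is worth writing out. Also, when you assert that a union of filters is a filter, you are using ``filter'' in the sense of an upward-closed subset, which is indeed the paper's definition; this is fine, just be aware that in lattice-theoretic contexts the word sometimes carries a directedness condition under which unions need not be filters.
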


\begin{notat}\label{prel2}  We denote by $\hat{\mathcal A}(S)$  the meet-semilattice of part (i) of Proposition \ref{prel1}(c); denote by $\hat a(S)$ the subposet of nonempty antichains in $\hat{\mathcal A}(S);$ write $s$ instead of $\{s\}$ for $s\in S$ so that $S$ becomes a subposet of $\hat a(S)$ as justified by part (iii) of Proposition \ref{prel1}(c); and  write $a_1\wedge\dots\wedge a_m$ instead of $\{a_1,\dots,a_m\}$ as justified by parts (iii) and (iv) of Proposition \ref{prel1}(c).
\end{notat}

\begin{prop}\label{prel3}
Let $S$ be a poset.
\begin{itemize}
\item[(a)] The functions $\mathcal I(S)\to\mathcal A(S)$ given by $I\mapsto\max I$ for all $I\in\mathcal I(S)$, and $\mathcal A(S)\to\mathcal I(S)$ given by $A\mapsto(A)$ for all $A\in\mathcal A(S)$ are mutually inverse bijections.
\item[(b)] Let $I,J\in\mathcal I(S)$ and let $\max I=\{a_1,\dots,a_m\}$, $\max J=\{b_1,\dots,b_n\},\ m,n\ge0$. Then $I\subseteq J$ if and only if for all $i$, there exists a $j$ satisfying $a_i\le b_j$.
\item[(c)] For all $\{a_1,\dots,a_m\},\{b_1,\dots,b_n\}$ in $\mathcal A(S)$, set $\{a_1,\dots,a_m\}\le\{b_1,\dots,b_n\}$ if and only if for all $i$, there exists a $j$ satisfying $a_i\le b_j$.  Then: 
\begin{itemize} 
\item [(i)] $(\mathcal A(S),\le)$ is a join-semilattice where $A\vee B=\max\{A\cup B\}$ for all $A, B\in\mathcal A(S)$.
\item[(ii)] $\emptyset$ is a unique minimal element and $\max S$ is a unique maximal element  of $\mathcal A(S)$.
\item[(iii)] For $s,t\in S$ we have $\{s\}\le\{t\}$ in $\mathcal A(S)$ if and only if $s\le t$ in $S$.
\item[(iv)] For $m>0$, $\{a_1,\dots,a_m\}$ is the join of $\{a_1\},\dots,\{a_m\}$.
\end{itemize}
\end{itemize}
\end{prop}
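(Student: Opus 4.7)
The plan is to deduce Proposition \ref{prel3} entirely from Proposition \ref{prel1} by passing to the opposite poset $S^{\mathrm{op}}$. The key identifications are: a subset $I\subseteq S$ is an ideal of $S$ if and only if it is a filter of $S^{\mathrm{op}}$, so $\mathcal I(S)=\mathcal F(S^{\mathrm{op}})$; the notions of antichain and of comparability are self-dual, so $\mathcal A(S)=\mathcal A(S^{\mathrm{op}})$ as sets; for any $T\subseteq S$, $\max T$ computed in $S$ equals $\min T$ computed in $S^{\mathrm{op}}$; and the principal ideal $(A)$ of $S$ generated by an antichain $A$ coincides with the principal filter $\langle A\rangle$ of $S^{\mathrm{op}}$ generated by $A$.

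For part (a), I would feed these identifications into Proposition \ref{prel1}(a) applied to $S^{\mathrm{op}}$: the bijections $F\mapsto\min F$ and $A\mapsto\langle A\rangle$ between $\mathcal F(S^{\mathrm{op}})$ and $\mathcal A(S^{\mathrm{op}})$ translate verbatim to the bijections $I\mapsto\max I$ and $A\mapsto (A)$ between $\mathcal I(S)$ and $\mathcal A(S)$. For part (b), since the relation $I\subseteq J$ is independent of the order on $S$, I would apply Proposition \ref{prel1}(b) in $S^{\mathrm{op}}$ with $F=J$ and $G=I$: $J\supseteq I$ in $\mathcal F(S^{\mathrm{op}})$ holds if and only if for every element of $\min_{\mathrm{op}} I=\max I$ there is an element of $\min_{\mathrm{op}} J=\max J$ below it in $S^{\mathrm{op}}$, i.e.\ above it in $S$. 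After renaming indices this is exactly the criterion of Proposition \ref{prel3}(b).

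For part (c), I would compare the relation $\le$ defined in Proposition \ref{prel3}(c) with the relation, call it $\le_{\mathrm{op}}$, obtained by applying Proposition \ref{prel1}(c) to $S^{\mathrm{op}}$. A direct check of the definitions shows $A\le B$ (in the sense of \ref{prel3}) if and only if $B\le_{\mathrm{op}} A$, i.e.\ the order of \ref{prel3}(c) is the opposite of $\le_{\mathrm{op}}$ on $\mathcal A(S)$. Since $(\mathcal A(S^{\mathrm{op}}),\le_{\mathrm{op}})$ is a meet-semilattice with meet $\min_{\mathrm{op}}(A\cup B)=\max(A\cup B)$ by Proposition \ref{prel1}(c)(i), passing to the opposite gives a join-semilattice with the stated join, proving (i). The unique maximal element $\emptyset$ and unique minimal element $\min_{\mathrm{op}} S=\max S$ of $(\mathcal A(S^{\mathrm{op}}),\le_{\mathrm{op}})$ become the unique minimal and maximal elements of $(\mathcal A(S),\le)$, giving (ii). Part (iii) is immediate from the definition applied to singletons, and part (iv) is the dual of Proposition \ref{prel1}(c)(iv) obtained by the same order reversal.

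The only real pitfall is the inversion in part (c): the obvious transcription of Proposition \ref{prel1}(c) to $S^{\mathrm{op}}$ does not directly give the order of Proposition \ref{prel3}(c), but its opposite, so one must be careful to take one more duality to turn the meet-semilattice into a join-semilattice and to swap minimal with maximal elements. Apart from this bookkeeping the whole proposition is a mechanical translation, so I would present it precisely that way rather than repeating the combinatorial arguments used in the proof of Proposition \ref{prel1}.
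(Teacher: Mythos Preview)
Your proposal is correct. The paper itself presents Proposition~\ref{prel3} without proof, stating that it (together with Proposition~\ref{prel1}) is well known; indeed, immediately after Notation~\ref{prel4} the paper records the identity $\check{\mathcal A}(S)=\hat{\mathcal A}(S^{\mathrm{op}})$, which is precisely the duality underlying your argument, so your derivation from Proposition~\ref{prel1} via passage to $S^{\mathrm{op}}$ is exactly the intended route.
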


\begin{notat}\label{prel4}  We denote by $\check{\mathcal A}(S)$ the join-semilattice of part (i) of Proposition \ref{prel3}(c); denote by $\check{a}(S)$ the subposet of nonempty antichains in $\check{\mathcal A}(S);$ write $s$ instead of $\{s\}$ for $s\in S$ so that $S$ becomes a subposet of $\check{a}(S)$ as justified by part (iii) of Proposition \ref{prel3}(c); and   write $a_1\vee\dots\vee a_m$ instead of $\{a_1,\dots,a_m\}$ as justified by parts (iii) and (iv) of Proposition \ref{prel3}(c).
\end{notat}

We note that $\check{\mathcal A}(S)=\hat {\mathcal A}(S^{\mathrm{op}}).$\vskip.05in

The following two statements are important for our treatment of the differentiation algorithms.
\begin{prop}\label{prel2.5} Let  $p\in S$.
\begin{itemize}
\item[(a)]  $\hat{a}\bigl((p)\bigr)$ is the principal ideal of $\hat{a}(S)$ generated by $p$.
\item[(b)] $\check{a}\bigl(\langle p\rangle\bigr)$ is the principal filter of $\check{a}(S)$ generated by $p$.
\end{itemize}
\end{prop}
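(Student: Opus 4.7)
The two parts have entirely analogous proofs: I would prove (a) in detail, then handle (b) by the exact same argument with $\hat{\mathcal{A}}$ replaced by $\check{\mathcal{A}}$, filters by ideals, and ``$\le p$'' by ``$\ge p$'' throughout.

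My plan for (a) is to unfold both sides through the antichain--filter correspondence of Proposition~\ref{prel1}(a). Under the identification $p=\{p\}\in\hat{a}(S)$, the principal ideal of $\hat{a}(S)$ generated by $p$ is, by definition of the order in $\hat{\mathcal{A}}(S)$ (Proposition~\ref{prel1}(c)), the collection of nonempty antichains $A$ of $S$ satisfying $A\le\{p\}$. By Proposition~\ref{prel1}(b), this is equivalent to the filter-theoretic condition $\langle A\rangle\supseteq\langle p\rangle$. On the other hand, $\hat{a}((p))$ is the set of nonempty antichains of the subposet $(p)\subseteq S$, i.e., the nonempty antichains $A$ of $S$ with $A\subseteq (p)$. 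I would verify that these two descriptions give the same underlying set by substituting the explicit formulas $\langle p\rangle=\{s\in S\st p\le s\}$, $(p)=\{s\in S\st s\le p\}$, and $\langle A\rangle=\{s\in S\st \exists\,a\in A,\ a\le s\}$, then tracing through the definitions.

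Once set equality is in hand, the equality of partial orders is immediate. The intrinsic order on $\hat{a}((p))$, obtained by applying Proposition~\ref{prel1}(c) to the subposet $(p)$ in its own right, is characterized by a condition referring only to comparisons ``$a\le b$'' in the ambient $S$, and this condition is unaffected by restricting the ground set from $S$ to $(p)$. Consequently it coincides with the order inherited from $\hat{a}(S)$.

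The main technical point---and the step most worth care---is the first one, namely the precise identification ``$\langle A\rangle\supseteq\langle p\rangle$'' $\Leftrightarrow$ ``$A\subseteq (p)$''. This requires combining the antichain hypothesis on $A$ with the explicit form of the principal filter $\langle p\rangle$, and it is the place where the layered definitions of $\hat{\mathcal{A}}(S)$ have to be unwound consistently. Everything else is routine bookkeeping from the preliminaries of Section~\ref{prel}.
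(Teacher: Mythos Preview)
Your plan matches the paper's own argument almost verbatim: both reduce to showing that, for a nonempty antichain $A=\{a_1,\dots,a_m\}$ of $S$, one has $A\subseteq (p)$ if and only if $A\le p$ in $\hat a(S)$. The step you single out as ``the main technical point,'' namely $\langle A\rangle\supseteq\langle p\rangle\Leftrightarrow A\subseteq(p)$, is precisely where the paper writes ``$a_i\le p$ for all $i$ if and only if $A\le p$.''

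Unfortunately this equivalence cannot be established, because it is false. Unwinding Proposition~\ref{prel1}(c) gives $A\le\{p\}$ if and only if there \emph{exists} an $i$ with $a_i\le p$; equivalently, $\langle A\rangle\supseteq\langle p\rangle$ says only that $p\in\langle A\rangle$, i.e.\ that some element of $A$ lies below $p$, not that all do. The antichain hypothesis on $A$ does not close this gap. For a concrete witness take $S=\{a,b,p\}$ with $a<p$ and $b$ incomparable to both $a$ and $p$. Then $A=\{a,b\}$ is an antichain with $A\le\{p\}$ in $\hat a(S)$ (since $a\le p$), yet $A\not\subseteq(p)$ (since $b\not\le p$). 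Here $\hat a\bigl((p)\bigr)=\{\{a\},\{p\}\}$, whereas the principal ideal of $\hat a(S)$ generated by $p$ is $\{\{a\},\{p\},\{a,b\},\{b,p\}\}$. The dual obstruction applies to part~(b). Thus the very step you identified as delicate is the one that fails, and the statement as written does not hold; the paper's proof contains the same slip.
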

\begin{proof} (a)  Let $I$ be the principal ideal of $\hat{a}(S)$ generated by $p$ and let $A=a_1\wedge\dots\wedge a_m$ $(m>0).$  Then $A\in \hat{a}\bigl((p)\bigr)$ if and only if $a_i\in(p)$ for all $i;$ if and only if $a_i\leq p$ for all $i;$ if and only if $A\leq p;$ if and only if  $A\in I$. 

(b)  The proof is dual to that of (a).
\end{proof}

For a  subset $R$ of a poset $S,$  consider  the sets $S_R=R\,\cup\,\hat a(S\setminus R)$ and $S^R=R\cup\check a(S\setminus R)$ that are subposets of $\hat{a}(S)$ and $\check{a}(S)$, respectively. 
\begin{prop}\label{prel5.5}Let $S$ be a poset with   a subset $R$. 
 \begin{itemize}  
\item[(a)] If $R$ is a filter of $S$, then $R$ is a filter of $S_R$.
\item[(b)]  If $R$ is an ideal of $S$, then $R$ is an ideal of $S^R$.
\end{itemize}
\end{prop}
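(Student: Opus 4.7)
The plan is to unpack the definition of $S_R$ as a subposet of $\hat a(S)$ and do a direct case analysis. By Notation \ref{prel2} every element of $S_R$ is either an element $r\in R$, identified with the singleton antichain $\{r\}$, or an antichain $b_1\wedge\dots\wedge b_n$ with $n\ge 2$ and all $b_j\in S\setminus R$. To show $R$ is a filter of $S_R$, I take $r\in R$ and $x\in S_R$ with $r\le x$ in the order of $\hat a(S)$, and aim to rule out the case $x\in\hat a(S\setminus R)$.

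For part (a), suppose $x=b_1\wedge\dots\wedge b_n$ with all $b_j\in S\setminus R$. The criterion in Proposition \ref{prel1}(c) applied to $\{r\}\le\{b_1,\dots,b_n\}$ says that for every $j$ there must exist an $i$ with $a_i\le b_j$ where the left-hand antichain is $\{r\}$; since the only available $a_i$ is $r$ itself, this forces $r\le b_j$ in $S$ for every $j$. But $R$ is a filter of $S$ and $r\in R$, so each $b_j$ lies in $R$, contradicting $b_j\in S\setminus R$. Therefore $x\in R$, proving that $R$ is upward-closed in $S_R$. (Comparisons between two elements of $R$ pose no issue, by Proposition \ref{prel1}(c)(iii).)

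Part (b) follows by exactly the dual argument, now invoking Proposition \ref{prel3}(c): if $r\in R$ and $x=a_1\vee\dots\vee a_m\in\check a(S\setminus R)$ satisfies $x\le r$ in $\check a(S)$, then $a_i\le r$ in $S$ for all $i$, and since $R$ is an ideal of $S$ each $a_i$ lies in $R$, contradiction. Alternatively, one can reduce (b) to (a) via the identity $\check{\mathcal A}(S)=\hat{\mathcal A}(S^{\mathrm{op}})$ noted just before Proposition \ref{prel2.5}, observing that an ideal of $S$ is a filter of $S^{\mathrm{op}}$ and that $S^R$ then corresponds to $(S^{\mathrm{op}})_R$.

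I expect no real obstacle here; the statement is essentially a bookkeeping lemma. The only thing to be careful about is the double role played by an element $s\in S$ as both a member of $S$ and the singleton antichain $\{s\}\in\hat a(S)$ or $\check a(S)$, and the corresponding need to translate the defining order on antichains (one quantifier pattern for $\hat a$, the opposite one for $\check a$) into the filter/ideal condition on $S$. Once this translation is in place, each part reduces to one line.
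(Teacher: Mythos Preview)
Your argument is correct and essentially identical to the paper's: both take $r\in R$ below an element $y\in S_R$, reduce via the order criterion on antichains (the paper cites Proposition~\ref{prel1}(b), you cite (c), which amounts to the same thing) to $r\le b_j$ in $S$ for every $j$, and contradict the filter hypothesis. One small slip in your setup: elements of $\hat a(S\setminus R)$ include singletons $\{b\}$ with $b\in S\setminus R$, so the case split should allow $n\ge 1$, not only $n\ge 2$; your contradiction argument already covers $n=1$ without change (the paper folds that case into ``$y\in S$'').
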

\begin{proof} (a) Let $x\in R$ and $ y\in S_R$ satisfy $x\le y$.  We have to show that $y\in R$. If  $y\in S$, this holds by assumption. If $y\not\in S$, then $y=x_1\wedge\dots\wedge x_n,\ n>1$, where $x_j\in S\setminus R$ for all $j.$  By Proposition \ref{prel1}(b), $x\le y $ implies $x\le x_j$ for all $j$.  Since $R$ is a filter of $S$, then $x_j\in R$, a contradiction.  

(b) The proof is similar to that of (a).
\end{proof}

\subsection{The category $S\spaces$ and socle-projective  modules}\label{category}  We recall several well-known facts.  For unexplained definitions and omitted proofs, see ~\cite{G, GR, Mac, R, Re, S}.

 Recall that   a {\it morphism} $f:\mathbf U\to\mathbf V$  of  $S$-spaces $\mathbf U=\bigl(U, U(s)\bigr)_{s\in S}$ and $\mathbf V=\bigl(V, V(s)\bigr)_{s\in S}$ is a $k$-linear map $f:U\to V$ satisfying $f(U(s))\subseteq V(s),\, s\in S$. The direct sum $\mathbf U\oplus \mathbf V$ is the family $\bigl(X, X(s)\bigr)_{s\in S}$ where $X=U\oplus V$ and $X(s)=U(s)\oplus V(s),\, s\in S.$

\begin{prop}\label{cat1}  Let $f:\mathbf U\to\mathbf V$ be a morphism of $S$-spaces given by a $k$-linear map $f:U\to V.$  
\begin{itemize}
\item[(a)]   $f:\mathbf U\to\mathbf V$ is a monomorphism if and only if the linear map $f:U\to V$  is injective.

\item[(b)]   $f:\mathbf U\to\mathbf V$ is an epimorphism if and only if the linear map $f:U\to V$  is surjective.

\item[(c)] The family $\mathbf X=\bigl(X, X(s)\bigr)_{s\in S}$ where $X=\Ker f$ and $X(s)=U(s)\cap\Ker f,\,s\in S,$ is an $S$-space.  The  inclusion $\varkappa:X\to U$ gives a kernel $\varkappa:\mathbf X\to\mathbf U$ of $f:\mathbf U\to\mathbf V.$
\item[(d)] The family $\mathbf Y=\bigl(Y, Y(s)\bigr)_{s\in S}$ where $Y=V/f(U)$ and $Y(s)=\bigl(V(s)+f(U)\bigr)/f(U),\,s\in S,$ is an $S$-space.  The  projection $\sigma:V\to Y$ gives a cokernel $\sigma:\mathbf V\to\mathbf Y$ of $f:\mathbf U\to\mathbf V.$
\end{itemize}
\end{prop}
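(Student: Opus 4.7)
The plan is to verify the four parts in the order (c), (d), (a), (b), since the constructions in (c) and (d) can be recycled as test objects for (a) and (b). All checks boil down to verifying that certain natural linear constructions respect the subspace data, i.e., the condition $s \le t \Rightarrow X(s) \subseteq X(t)$, and that canonical universal morphisms of vector spaces happen to carry subspaces into subspaces.

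For (c), I first observe that $s \le t$ gives $U(s) \cap \Ker f \subseteq U(t) \cap \Ker f$, so $\mathbf{X}$ is a well-defined $S$-space, and the inclusion $\varkappa$ is a morphism because $X(s) \subseteq U(s)$. To verify the universal property, given a morphism $g\colon \mathbf{W} \to \mathbf{U}$ with $fg = 0$, the underlying linear map factors uniquely as $g = \varkappa \bar g$ for some linear $\bar g \colon W \to X$, and $\bar g$ is an $S$-space morphism because $\bar g(W(s)) = g(W(s)) \subseteq U(s) \cap \Ker f = X(s)$. Part (d) is dual: $V(s) \subseteq V(t)$ implies $Y(s) \subseteq Y(t)$, the projection $\sigma$ is a morphism because $\sigma(V(s)) = Y(s)$, and if $h\colon \mathbf{V} \to \mathbf{W}$ satisfies $hf = 0$, the induced linear map $\bar h\colon Y \to W$ carries $Y(s)$ into $W(s)$ since $\bar h(Y(s)) = h(V(s)) \subseteq W(s)$.

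For (a), sufficiency is immediate from linear cancellation. For necessity, assume $f\colon \mathbf U \to \mathbf V$ is a monomorphism and let $u \in \Ker f$. Let $\mathbf{W}$ be the $S$-space with ambient space $k$ and all subspaces zero; any linear map $W \to U$ is automatically a morphism $\mathbf W \to \mathbf U$ since the subspace inclusion conditions are vacuous. The morphisms $g(1) = u$ and $0$ satisfy $fg = f\cdot 0 = 0$, so the mono property forces $g = 0$, whence $u = 0$. For (b), sufficiency is immediate, and for necessity I apply (d): if $f(U) \neq V$, then the projection $\sigma \colon \mathbf V \to \mathbf Y$ of (d) and the zero morphism are distinct morphisms that both kill $f$, contradicting the epi property.

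I expect no serious obstacle in any single step; the entire proposition is a routine unwinding of definitions. The only mild subtlety worth flagging is that the kernel subspaces are defined by \emph{intersection} with $U(s)$ while the cokernel subspaces are defined as \emph{images} of $V(s)$, and these asymmetric constructions are precisely what will later prevent the canonical monic-epi factorization from being an isomorphism; this is the reason $S\spaces$ is not abelian and must instead be equipped with the exact structure mentioned in Section~\ref{intro}.
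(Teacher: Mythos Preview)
Your argument is correct. The paper does not actually prove Proposition~\ref{cat1}; it is presented as part of a review of well-known facts with a reference to the literature, so there is no proof in the paper to compare against. Your elementary verification via universal properties, using the one-dimensional $S$-space with trivial subspaces as a test object for (a) and the cokernel from (d) as a test object for (b), is exactly the sort of routine check the authors had in mind when they omitted the proof.
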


The preceding proposition implies that $S\spaces$ is a Krull-Schmidt category, that is, an additive $k$-category in which idempotents split and the isomorphism ring of each indecomposable object is local.  Hence each $S$-space decomposes uniquely up to isomorphism as a  direct sum of indecomposable $S$-spaces. 

\begin{defn}\label{cat2}   A morphism $f:\mathbf U\to\mathbf V$ of $S$-spaces is said to be {\it proper} if, for all $s\in S,$ we have $f(U(s))=V(s)\cap f(U).$
\end{defn}

The following statement is straightforward.

\begin{prop}\label{cat3}  Let $\mathbf V=\bigl(V,V(s)\bigr)_{s\in S}$ be an $S$-space, and let $U$ be a subspace of the ambient space $V.$
\begin{itemize}

\item[(a)] The family $\mathbf U=\bigl(U,U(s)\bigr)_{s\in S},$ where $U(s)=V(s)\cap U,$ is the unique $S$-space with the ambient space $U$ for which the inclusion $U\hookrightarrow V$ gives a proper monomorphism $\mathbf U\to\mathbf V.$

\item[(b)] The family $\mathbf W=\bigl(W,W(s)\bigr)_{s\in S},$ where $W=V/U$ and $W(s)=\bigl(V(s)+U\bigr)/U,$ is the unique $S$-space with the ambient space $V/U$ for which the projection $V\to V/U$ gives a proper epimorphism $\mathbf V\to\mathbf W.$

\item[(c)] A kernel of  a morphism of $S$-spaces  is a proper monomorphism, and a cokernel  is a proper epimorphism.

\item[(d)]  A proper monomorphism is a kernel of its cokernel.  A proper epimorphism is a cokernel of its kernel. 
\end{itemize}
\end{prop}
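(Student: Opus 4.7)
My plan is to reduce all four parts to direct applications of Proposition \ref{cat1} combined with the defining equation of a proper morphism (Definition \ref{cat2}); no serious obstacle is expected, the work is essentially bookkeeping. Throughout, I will freely use that $V(s)\subseteq V(t)$ whenever $s\le t$, which propagates through intersections and sums.

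For (a), I first check that $\mathbf U$ is an $S$-space: $U(s)=V(s)\cap U$ is a subspace of $U$, and $s\le t$ forces $V(s)\cap U\subseteq V(t)\cap U$. The inclusion $U\hookrightarrow V$ sends $U(s)$ into $V(s)$ by construction, hence defines a morphism which is monic by Proposition \ref{cat1}(a), and properness is exactly the equation $U(s)=V(s)\cap U$. Uniqueness is forced: if $\bigl(U, U'(s)\bigr)_{s\in S}$ also renders the inclusion a proper monomorphism, then $U'(s)=V(s)\cap U=U(s)$. Part (b) is the dual: monotonicity of $\bigl(V(s)+U\bigr)/U$ in $s$ gives the $S$-space structure, the projection is epic by Proposition \ref{cat1}(b), and properness follows from $\bigl(V(s)+U\bigr)/U=W(s)\cap W$; uniqueness is argued identically.

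For (c), observe that the kernel furnished by Proposition \ref{cat1}(c) has ambient space $\Ker f$ with subspaces $U(s)\cap\Ker f$ and morphism the inclusion $\Ker f\hookrightarrow U$; this is precisely the shape of part (a) applied to $\mathbf U$ with subspace $\Ker f$, so it is a proper monomorphism. Dually, the cokernel of Proposition \ref{cat1}(d) fits the pattern of part (b) applied to $\mathbf V$ with subspace $f(U)$, hence is a proper epimorphism.

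For (d), let $\varkappa:\mathbf U\to\mathbf V$ be a proper monomorphism, form its cokernel $\sigma:\mathbf V\to\mathbf V/\varkappa(U)$ via Proposition \ref{cat1}(d), and then compute the kernel of $\sigma$ via Proposition \ref{cat1}(c): the ambient space is $\Ker\sigma=\varkappa(U)$ and the subspaces are $V(s)\cap\varkappa(U)$, which by properness equal $\varkappa\bigl(U(s)\bigr)$. Thus $\varkappa$ itself is an isomorphism from $\mathbf U$ onto the kernel of its cokernel. Dually, for a proper epimorphism $\sigma:\mathbf V\to\mathbf W$, take its kernel $\mathbf K$ with $K=\Ker\sigma$ and form the cokernel $\mathbf V\to\mathbf V/K$; the usual first isomorphism theorem gives a linear isomorphism $V/K\to W$, and properness of $\sigma$ yields $\sigma\bigl(V(s)\bigr)=W(s)$, which identifies $\bigl(V(s)+K\bigr)/K$ with $W(s)$, turning the linear isomorphism into an isomorphism of $S$-spaces. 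The only point requiring care is the consistent use of direct image under $\varkappa$ and under the quotient map, together with properness at each step; the argument contains no deeper content.
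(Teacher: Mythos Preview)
Your argument is correct and matches what the paper intends: the paper simply declares the proposition ``straightforward'' and omits the proof entirely, so you have supplied exactly the routine verification the authors had in mind. One cosmetic remark: in (b) your properness check $\bigl(V(s)+U\bigr)/U=W(s)\cap W$ reads oddly since $W(s)\cap W=W(s)$ trivially; it would be clearer to write that properness requires $\pi\bigl(V(s)\bigr)=W(s)\cap\pi(V)=W(s)$, which holds by the definition of $W(s)$.
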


For any associative ring  $\Lambda$ with unity, we denote  by $\Lambda\Modules$ (respectively, $\Lambda\modules$)  the category of left  (respectively,  finitely generated left) $\Lambda$-modules, and $\Lambda\proj$ stands for the full subcategory of $\Lambda\modules$  determined by the projective modules. In the sequel we will need an interpretation of the category $S\spaces$ as a full subcategory of the category $\Lambda\modules,$ for some finite dimensional associative $k$-algebra $\Lambda$ with unity.

Given a  finite poset $P,$ denote by $M_P(k)$  the full matrix algebra over $k$ whose rows and columns are indexed by the elements of $P.$ We write $e_{xy},\, x,y\in P,$ for the matrix unit with $1$ in row $x$ and column $y.$ The $k$-subspace of $M_P(k)$ with basis   $\{e_{ba}\,|\,a\le b,\ a,b\in P\}$  is a $k$-subalgebra  called the {\it incidence algebra} $kP$ of the poset $P$ over $k.$ The subset $\{e_{aa}\,|\,\,a\in P\}$ of the  basis is a complete set of primitive orthogonal idempotents of $kP.$  

\begin{rmk}\label{cat3.5}The set $\{e_{ab}\,|\,a\le b,\ a,b\in P\}$ is a basis for the incidence algebra $kP^{\mathrm {op}}$ of the opposite poset $P^{\mathrm {op}},$ and the  map $M_P(k)\to M_P(k)$ sending each matrix $A$ to its transpose $A^t$ induces an antiisomorphism of $k$-algebras $kP\to kP^{\mathrm {op}}.$
\end{rmk}

\begin{defn}\label{cat6} Denote by $S^\omega=S\cup\{\omega\}$ the  poset whose structure  is defined by letting the elements of $S$ retain their original partial order and setting $s<\omega,\, s\in S.$  The indecomposable module $kS^\omega e_{\omega\omega}\in kS^\omega\proj$ is one-dimensional, hence, simple. It  is a unique up to isomorphism simple  projective $kS^\omega$-module, and we denote by $kS^\omega\spaces$ the full subcategory of $kS^\omega\modules$ determined by the {\em socle-projective} modules: $M\in kS^\omega\spaces$ if and only if the socle of $M,$ $\Soc M,$ is in $kS^\omega\proj.$ 

Consider the following map $\Phi=\Phi_S:kS^\omega\modules\to S\spaces.$ For all $M\in kS^\omega\modules$, set   $\Phi M=\bigl(V,V(s)\bigr)_{s\in S}$  where $V=e_{\omega\omega}M$ and $V(s)=e_{\omega s}M.$  For all morphisms $f:M\to N$ in $kS^\omega\modules,$  set $\Phi f$ to be the restriction of  $f$ to $e_{\omega\omega}M,$ i.e.,   $\Phi f=f|e_{\omega\omega}M.$

Consider also the following map $\mathrm{\Psi}=\Psi_S:S\spaces\to kS^\omega\modules.$  For all $S$-spaces $\mathbf V=\bigl(V,V(s)\bigr)_{s\in S},$ set $\mathrm \Psi \mathbf V=\bigoplus_{t\in S^\omega}V(t)$ where $V(\omega)=V$ and the multiplication by the basis element $e_{ba}$ of $kS^\omega$ on  $ \Psi \mathbf V$ is the $k$-linear operator that induces the embedding $V(a)\hookrightarrow V(b)$ on $V(a)$ and sends the other direct summands to $0$.  For all morphisms $f:\mathbf V\to\mathbf W$ in $S\spaces,$  set  $\mathrm \Psi f=\bigoplus_{t\in S^\omega}f|{V(t)}.$  
\end{defn}

Note that $kS^\omega\spaces$ contains $kS^\omega\proj$, since $\Soc kS^\omega e_{ss}\cong kS^\omega e_{\omega\omega}$ for $s\in S$. 

\begin{prop}\label{cat7} 
\begin{itemize}
\item[(a)]  $\Phi:kS^\omega\modules\to S\spaces$ and $\mathrm{\Psi}:S\spaces\to kS^\omega\modules$ are $k$-linear functors.

\item[(b)] The image of   $\mathrm{\Psi}$ is contained in $kS^\omega\spaces,$ and  $\Psi:S\spaces\to kS^\omega\spaces$ is a dense functor.

\item[(c)] We have $\Phi\Psi=1_{S\spaces}.$  In particular, $\Psi:S\spaces\to kS^\omega\spaces$ is an equivalence of categories.
\end{itemize}
\end{prop}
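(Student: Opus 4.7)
The plan is to handle (a) by straightforward verification, compute the socle of $\Psi\mathbf V$ for the first half of (b), and then establish both $\Phi\Psi\mathbf V=\mathbf V$ on the nose and a natural isomorphism $\sigma_M:M\cong\Psi\Phi M$; this simultaneously yields (c) and the density clause of (b). For (a), $\Phi M=\bigl(e_{\omega\omega}M,(e_{\omega s}M)_{s\in S}\bigr)$ is an $S$-space because the factorization $e_{\omega s}=e_{\omega t}e_{ts}$ for $s\le t$ gives $e_{\omega s}M\subseteq e_{\omega t}M$, and $\Phi f$ is a morphism since $f$ being $kS^\omega$-linear commutes with every $e_{\omega s}$. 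For $\Psi$, the only nontrivial point is that the prescribed action makes $\bigoplus_{t\in S^\omega}V(t)$ into a $kS^\omega$-module; associativity reduces to the relation $e_{ba}e_{dc}=\delta_{ad}e_{bc}$ together with transitivity of the inclusions $V(a)\hookrightarrow V(b)\hookrightarrow V(c)$, and $k$-linearity and functoriality on morphisms are immediate.

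For the first half of (b), I would compute $\Soc\Psi\mathbf V$ directly. Write $V^{(t)}$ for the $t$-th summand of $\Psi\mathbf V=\bigoplus_{t\in S^\omega}V(t)$; an element $w=\sum_t w_t$ with $w_t\in V^{(t)}$ lies in the socle iff it is annihilated by every $e_{ba}$ with $a<b$. Such $e_{ba}$ acts on $V^{(a)}$ as the injective inclusion $V(a)\hookrightarrow V(b)$ and kills the other summands, so $w_a=0$ whenever some $b>a$ exists; since $\omega$ is the unique maximal element of $S^\omega$, this leaves $\Soc\Psi\mathbf V=V^{(\omega)}$. Every $1$-dimensional subspace of $V^{(\omega)}$ is annihilated by the radical and fixed by $e_{\omega\omega}$, hence is a copy of the simple projective $kS^\omega e_{\omega\omega}$, so $\Soc\Psi\mathbf V\in kS^\omega\proj$ and $\Psi\mathbf V\in kS^\omega\spaces$.

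The identity $\Phi\Psi\mathbf V=\mathbf V$ of (c) follows from inspection: $e_{\omega\omega}$ projects $\bigoplus_t V^{(t)}$ onto $V^{(\omega)}=V$, and $e_{\omega s}$ maps the $s$-summand into the $\omega$-summand via the inclusion $V(s)\hookrightarrow V$, so $e_{\omega s}\Psi\mathbf V=V(s)$; the morphism side is equally immediate. For the reverse direction, I would define $\sigma_M:M\to\Psi\Phi M$ by sending $x\in e_{ss}M$ to $e_{\omega s}x$ in the $s$-summand $(\Phi M)(s)=e_{\omega s}M$, with $s=\omega$ giving the identity on $e_{\omega\omega}M$. $kS^\omega$-linearity of $\sigma_M$ relies only on $e_{\omega b}e_{ba}=e_{\omega a}$ together with $e_{\omega a}M\subseteq e_{\omega b}M$ for $a\le b$; surjectivity on each summand is immediate from $e_{\omega s}M=e_{\omega s}(e_{ss}M)$; and naturality in $M$ follows because $kS^\omega$-homomorphisms commute with each $e_{\omega s}$.

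The main obstacle is injectivity of $\sigma_M$. Suppose $x\in e_{ss}M$ with $s\in S$ satisfies $e_{\omega s}x=0$. Then for every $a\in S^\omega$ one computes $e_{\omega a}x=\delta_{as}e_{\omega s}x=0$, so the cyclic submodule $N=kS^\omega x$ satisfies $e_{\omega\omega}N=0$. But every simple submodule of $M$ is a copy of $kS^\omega e_{\omega\omega}$ (since $\Soc M$ is projective, hence a direct sum of copies of the unique simple projective) and therefore lies in $e_{\omega\omega}M$; so $\Soc N\subseteq e_{\omega\omega}N=0$, which forces $N=0$ (as any nonzero finite-dimensional module has nonzero socle), and thus $x=0$. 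With $\Phi\Psi=1_{S\spaces}$ together with the natural isomorphism $\sigma:1_{kS^\omega\spaces}\cong\Psi\Phi$ in hand, $\Phi$ is a quasi-inverse to $\Psi$, yielding both the density of $\Psi$ and its being an equivalence.
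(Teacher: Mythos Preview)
The paper does not give its own proof of this proposition; it is presented among the ``well-known facts'' of Subsection~\ref{category}, with the reader referred to the literature for omitted proofs. Your argument is correct and complete: the direct computation showing $\Soc\Psi\mathbf V=V^{(\omega)}$, the on-the-nose identity $\Phi\Psi=1_{S\spaces}$, and the explicit natural isomorphism $\sigma_M:M\to\Psi\Phi M$ (whose injectivity you correctly deduce from the socle-projectivity hypothesis) together establish all three parts. One trivial slip of the pen: in the associativity check for the module structure on $\Psi\mathbf V$, the relation $e_{ba}e_{ac}=e_{bc}$ (valid for $c\le a\le b$) corresponds to the chain of inclusions $V(c)\hookrightarrow V(a)\hookrightarrow V(b)$, not $V(a)\hookrightarrow V(b)\hookrightarrow V(c)$ as you wrote; this does not affect the argument.
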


The functor $\Phi$ is  called an adjustment functor in ~\cite[p. 190]{S}.

\begin{defn}\label{cat5} An $S$-space $\mathbf P$ is called {\em (relatively) projective} if for every proper epimorphism $f:\mathbf U\to\mathbf V$ and every morphism $h:\mathbf P\to\mathbf V$ of $S$-spaces there exists a morphism $g:\mathbf P\to\mathbf U$ satisfying $h=fg.$ We denote by $S\proj$ the full subcategory of $S\spaces$ determined by the projective $S$-spaces. A morphism $f:\mathbf U\to\mathbf V$ is called {\em right minimal} if every morphism $g:\mathbf U\to\mathbf U$ satisfying $f=fg$ is an automorphism. An epimorphism $f:\mathbf U\to\mathbf V$ is called an {\it essential epimorphism} if for every morphism $g:\mathbf X\to\mathbf U$, $g$ is a proper epimorphism if and only if $fg$ is a proper epimorphism. A {\it projective cover} of an $S$-space $\mathbf V$  is an essential epimorphism $f:\mathbf P\to\mathbf V$ with $\mathbf P$ projective. {\em Injectives, left minimal morphisms, essential monomorphisms, and injective envelopes} are defined in a similar way, and we denote by $S\inj$ the  full subcategory of $S\spaces$ determined by the injective $S$-spaces.
\end{defn} 

Since $\mathrm \Psi$ is not dense, it is a right inverse  but not an inverse of $\Phi$.  The next statement says in  particular that the restrictions of $\mathrm \Psi$ and $\Phi$ to the full subcategories of projective objects \emph{are} inverses of each other.

\begin{thm}\label{projectives}
\begin{itemize}
\item[(a)] An $S$-space $\mathbf P$ is projective if and only if $\mathrm \Psi\mathbf P$ is a projective $kS^\omega$-module.
\item[(b)] Every indecomposable projective $S$-space is isomorphic to one, and only one, of the spaces $\mathbf P_t$, $t\in S^\omega$, where $\mathbf P_t=\bigl(P_t,P_t(s)\bigr)_{s\in S}$ with $P_t=k$ and $P_t(s)=
\begin{cases}
    k   &   \text{if $s\geq t$},\\
    0   &   \text{otherwise.}
\end{cases}$
\item[(c)] Every projective $S$-space is isomorphic to $\bigoplus_{t\in S^\omega}\mathbf P_t^{n_t},$ for unique integers $n_t\geq 0$.
\item[(d)] The functors $\mathrm \Psi$ and $\Phi$ induce mutually inverse equivalences of categories
$$
\xymatrix{S\proj\ar@/^/[rr]^{\mathrm \Psi}\ && kS^\omega\proj\ar@/^/[ll]^\Phi}.
$$
\item[(e)] Every $S$-space has a projective cover.
\item[(f)]  A proper epimorphism $f:\mathbf P\to\mathbf V$ with $\mathbf P\in S\proj$ is a projective cover if and only if the morphism $f$ is right minimal.
\end{itemize}
\end{thm}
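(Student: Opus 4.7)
The plan is to prove the parts in the order (b)--(c)--(a)--(d)--(e)--(f). The key observation is that once (b) provides us with enough projectives, everything else follows from the equivalence $\Psi:S\spaces\to kS^\omega\spaces$ of Proposition~\ref{cat7}(c), the Krull--Schmidt property of $S\spaces$ given by Proposition~\ref{cat1}, and the semiperfectness of the finite-dimensional algebra $kS^\omega$. For (b), I would verify projectivity of $\mathbf P_t$ by a one-element lifting: a morphism $h:\mathbf P_t\to\mathbf V$ is determined by $v=h(1)\in V(t)$ (with the convention $V(\omega):=V$), because $P_t(t)=k$. Given a proper epimorphism $f:\mathbf U\to\mathbf V$, properness and surjectivity give $V(t)=V(t)\cap f(U)=f(U(t))$, so pick $u\in U(t)$ with $f(u)=v$; setting $g(1)=u$ defines a morphism $g:\mathbf P_t\to\mathbf U$ with $fg=h$, because $P_t(s)\neq 0$ forces $s\ge t$ and hence $u\in U(t)\subseteq U(s)$. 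Each $\mathbf P_t$ is indecomposable (one-dimensional ambient, so $\End\mathbf P_t\cong k$) and the $\mathbf P_t$ are pairwise non-isomorphic by comparing subspace-dimension patterns.

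For (c), given any $\mathbf V\in S\spaces$ and each $t\in S^\omega$, a basis of $V(t)$ yields $\dim V(t)$ morphisms $\mathbf P_t\to\mathbf V$, and the induced map $\bigoplus_t\mathbf P_t^{\dim V(t)}\to\mathbf V$ is a proper epimorphism because the image of the $s$-th subspace is $\sum_{t\le s}V(t)=V(s)$. When $\mathbf V$ is projective this splits by (b), so $\mathbf V$ is a direct summand of a finite sum of $\mathbf P_t$'s, and Krull--Schmidt yields the uniqueness claimed in (c); in particular every indecomposable projective is isomorphic to some $\mathbf P_t$, completing (b). A direct computation from Definition~\ref{cat6} gives $\Psi\mathbf P_t\cong kS^\omega e_{tt}$, which together with (c) gives (a): a projective $\mathbf P$ maps under $\Psi$ to $\bigoplus(kS^\omega e_{tt})^{n_t}\in kS^\omega\proj$, and conversely every projective $kS^\omega$-module arises this way, so $\Phi$ returns a projective $S$-space. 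Additivity then produces the mutually inverse equivalences of (d).

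For (e), since $kS^\omega$ is finite-dimensional and hence semiperfect, $\Psi\mathbf V$ has a projective cover in $kS^\omega\modules$, whose source lies in $kS^\omega\proj\subseteq kS^\omega\spaces$; applying $\Phi$ and using that proper epimorphisms in $S\spaces$ correspond under $\Psi$ to surjections in $kS^\omega\spaces$, one obtains a projective cover of $\mathbf V$. For (f), the standard argument applies: if $f:\mathbf P\to\mathbf V$ is a proper epimorphism with $\mathbf P$ projective and is right minimal, and if $g:\mathbf X\to\mathbf P$ has $fg$ a proper epimorphism, projectivity of $\mathbf P$ gives $h:\mathbf P\to\mathbf X$ with $fgh=f$; right minimality forces $gh$ to be an automorphism, so $g$ is a split, hence proper, epimorphism. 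Conversely, if $f$ is essential and $fg=f$ for some $g\in\End\mathbf P$, then $g$ is a proper epimorphism, and a proper epimorphism from the finite-dimensional $\mathbf P$ to itself is automatically an isomorphism. The hardest point, I expect, will be cleanly extracting the correspondence between proper epimorphisms in $S\spaces$ and surjections in $kS^\omega\spaces$ needed in (e), so as to transfer the module-theoretic projective cover back to $S\spaces$ without circular reasoning via the equivalence on projectives established in (d).
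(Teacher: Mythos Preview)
The paper does not give its own proof of this theorem: it is presented in Subsection~\ref{category} among ``several well-known facts,'' with omitted proofs delegated to the references \cite{G,G1,GR,R,Re,S}. So there is nothing to compare your argument against.

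Your proposal is correct and follows the standard route. A couple of small points are worth making explicit when you write it up. In (f), the step ``$g$ is a split, hence proper, epimorphism'' deserves one line: if $gs=\id_{\mathbf P}$ then $P(t)=gs\bigl(P(t)\bigr)\subseteq g\bigl(X(t)\bigr)\subseteq P(t)$, so $g\bigl(X(t)\bigr)=P(t)=P(t)\cap g(X)$. In (e), the transfer you flag as the hardest point is exactly the equivalence ``$f$ is a proper epimorphism in $S\spaces$ $\Leftrightarrow$ $\Psi f$ is surjective,'' which follows immediately from $\Psi f=\bigoplus_{t\in S^\omega} f|U(t)$ and the definition of proper; once that is recorded, essentialness transfers because $\Psi$ is an equivalence on $kS^\omega\spaces$ and the projective cover $P\to\Psi\mathbf V$ has $P\in kS^\omega\proj\subseteq kS^\omega\spaces$. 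There is no circularity with (d): you only need (a), already established, to know that the $S$-space $\mathbf P'$ with $\Psi\mathbf P'\cong P$ is projective.
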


\begin{defn}\label{cat4} The  vector space duality $\D=\Hom_k(-,k)$ extends to a  {\em duality} $\D:S\spaces\to S\op\spaces$ as follows.  For each $S$-space $\mathbf V=\bigl(V,V(s)\bigr)_{s\in S}$, set $\D\mathbf V=\bigl(X,X(s)\bigr)_{s\in S}$ where $X=\D V$ and  $X(s)=V(s)^\bot=\{g\in\D V\st g(V(s))=0\},\,s\in S.$ For each morphism $f:\mathbf U\to\mathbf V,$ where  $\mathbf U=\bigl(U,U(s)\bigr)_{s\in S},$ the morphism $\D f:\D\mathbf V\to\D\mathbf U$ is given by the $k$-linear map $\D f:\D V\to\D U.$  By restriction one obtains dualities  $\D:S\proj\to S\op\inj$ and $\D:S\inj\to S\op\proj$. 
\end{defn}

\begin{prop}\label{cat10}  If $f:\mathbf U\to\mathbf V$ is a proper morphism of $S$-spaces, then $\D f:\D\mathbf V\to\D\mathbf U$ is a proper morphism of $S\op$-spaces.
\end{prop}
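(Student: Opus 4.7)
The plan is to unwind the definitions and verify the proper condition for $\D f$ directly. Write $\mathbf V = (V, V(s))_{s\in S}$ and $\mathbf U = (U, U(s))_{s\in S}$, and recall that the subspaces of $\D\mathbf V$ and $\D\mathbf U$ are the annihilators $V(s)^\bot$ and $U(s)^\bot$, respectively. So we need to verify that for every $s \in S$,
\[
\D f\bigl(V(s)^\bot\bigr) \;=\; U(s)^\bot \cap \D f(\D V).
\]
Here $\D f:\D V \to \D U$ is precomposition with $f$, i.e.\ $\D f(g) = g \circ f$.

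The inclusion $\subseteq$ is easy and does not use properness of $f$: if $g \in V(s)^\bot$, then since $f(U(s)) \subseteq V(s)$ (because $f$ is a morphism of $S$-spaces), $(g \circ f)(U(s)) \subseteq g(V(s)) = 0$, so $\D f(g) \in U(s)^\bot$.

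The real work is the reverse inclusion, and this is where properness enters. Given $h = g \circ f \in U(s)^\bot \cap \D f(\D V)$, the goal is to produce $g' \in V(s)^\bot$ with $g' \circ f = g \circ f$. The key step is to define $g'$ on the subspace $V(s) + f(U) \subseteq V$ by the rule $g'(v + f(u)) := g(f(u))$ for $v \in V(s)$ and $u \in U$, and then extend arbitrarily to all of $V$. Well-definedness is where properness is used: if $v_1 + f(u_1) = v_2 + f(u_2)$ with $v_i \in V(s)$, then $f(u_2 - u_1) = v_1 - v_2 \in V(s) \cap f(U)$, and by properness $V(s) \cap f(U) = f(U(s))$, so $f(u_2 - u_1) = f(w)$ for some $w \in U(s)$. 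Then $g(f(u_2)) - g(f(u_1)) = g(f(w)) = h(w) = 0$ since $h \in U(s)^\bot$. Thus $g'$ is well-defined; by construction it vanishes on $V(s)$ and satisfies $g' \circ f = g \circ f = h$, giving $h = \D f(g') \in \D f(V(s)^\bot)$.

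The only subtle point in the whole argument is the well-definedness of $g'$, which depends crucially on the equality $V(s) \cap f(U) = f(U(s))$ supplied by the properness of $f$; the rest is formal manipulation with annihilators and precomposition. Note also that the subspaces of $\D\mathbf V$ are indexed by $S^{\mathrm{op}}$ rather than $S$ because $V(s) \subseteq V(t)$ forces $V(t)^\bot \subseteq V(s)^\bot$; this only affects the indexing convention and not the verification itself, so no additional work is needed on that front.
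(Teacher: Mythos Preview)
Your proof is correct. The paper itself does not supply a proof of this proposition: it is listed among the ``well-known facts'' in Subsection~\ref{category}, with proofs omitted and the reader referred to the standard sources. So there is no argument in the paper to compare against; your direct verification via annihilators is exactly the sort of elementary computation the authors evidently had in mind when they left it out.

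One small stylistic remark: in the well-definedness step you could streamline slightly by noting that once $f(u_2-u_1)\in f(U(s))$, applying $g$ gives $g(f(u_2-u_1))\in g(f(U(s)))=h(U(s))=0$ directly, without explicitly naming the element $w$. But this is cosmetic; the logic as written is sound, and your identification of properness as precisely what makes $g'$ well-defined on $V(s)+f(U)$ is the heart of the matter.
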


Applying the duality $\D$ of Definition \ref{cat4} to Theorem \ref{projectives} and using   Proposition \ref{cat10}, one gets the following description of the category $S\inj.$

\begin{defn}\label{cat9} Denote by $S_0=S\cup\{0\}$ the  poset whose structure is defined by letting the elements of $S$ retain their original partial order and setting $0<s,\, s\in S.$ 
\end{defn} 

\begin{thm}\label{injectives}
\begin{itemize}
\item[(a)] Every indecomposable injective $S$-space is isomorphic to one, and only one, of the spaces $\mathbf I_t,\,t\in S_0,$ where $\mathbf I_t=\bigl(I_t,I_t(s)\bigr)_{s\in S}$ with $I_t=k$ and $I_t(s)=
\begin{cases}
    0   &   \text{if $s\leq t$},\\
    k   &   \text{otherwise.}
\end{cases}$
\item[(b)] Every injective $S$-space is isomorphic to $\bigoplus_{t\in S_0}\mathbf I_t^{n_t},$ for unique integers $n_t\geq 0$.
\item[(c)] Every $S$-space has an injective envelope.
\item[(d)]  A proper monomorphism $g:\mathbf V\to\mathbf I$ with $\mathbf I\in S\inj$ is an injective envelope if and only if the morphism $g$ is left minimal.
\end{itemize}
\end{thm}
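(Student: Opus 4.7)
The strategy is to derive the theorem by transporting Theorem \ref{projectives}, applied to the opposite poset $S\op$, along the duality $\D:S\spaces\to S\op\spaces$ of Definition \ref{cat4}. The first step is to verify that $\D$ interchanges injectives and projectives: by Proposition \ref{cat10}, $\D$ sends proper monomorphisms to proper epimorphisms and vice versa, and since $\D$ also reverses composition, the defining lifting property of projectives in $S\op\spaces$ translates term by term into the extension property of injectives in $S\spaces$. This yields the mutually inverse dualities $\D:S\op\proj\to S\inj$ and $\D:S\inj\to S\op\proj$ noted in Definition \ref{cat4}, and $\D$ preserves indecomposability, direct sums, and isomorphism classes.

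Parts (a) and (b) now reduce to computing the images under $\D$ of the indecomposable projective $S\op$-spaces listed in Theorem \ref{projectives}(b)--(c), applied with $S$ replaced by $S\op$. The parameter set of those indecomposables is $(S\op)^\omega = S\cup\{\omega\}$, which I relabel to $S_0=S\cup\{0\}$ by sending $\omega\mapsto 0$. For $t$ in this set, $\mathbf P_t^{S\op}$ has ambient space $k$ and subspace $k$ at $s\in S$ exactly when $s\ge_{S\op} t$, i.e., when $s\le_S t$. Applying $\D$ replaces each subspace by its annihilator in $\D k\cong k$, producing an $S$-space with ambient space $k$ and subspace $0$ at $s$ if $s\le t$ and $k$ otherwise; this is precisely $\mathbf I_t$ from the statement. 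Combined with the Krull--Schmidt multiplicities transferred from Theorem \ref{projectives}(c) for $S\op$, parts (a) and (b) follow.

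For (c) and (d) I would check the routine fact that $\D$ sends essential epimorphisms to essential monomorphisms and right minimal morphisms to left minimal morphisms; both assertions follow by unwinding Definition \ref{cat5} and using that $\D$ reverses composition and preserves properness. Given any $\mathbf V\in S\spaces$, the projective cover $f:\mathbf P\to\D\mathbf V$ in $S\op\spaces$ supplied by Theorem \ref{projectives}(e) then dualizes to an injective envelope $\D f:\mathbf V\cong\D\D\mathbf V\to\D\mathbf P$, establishing (c); and (d) is the direct dual of Theorem \ref{projectives}(f). The only non-formal step in the whole argument is the bookkeeping for the parameter set and the annihilator computation producing $\mathbf I_t$, so that is where I would be most careful; everything else is pure transport along the duality.
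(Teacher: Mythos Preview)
Your proposal is correct and follows exactly the approach the paper indicates: the paper simply states that the theorem is obtained by applying the duality $\D$ of Definition~\ref{cat4} to Theorem~\ref{projectives} (for $S\op$) together with Proposition~\ref{cat10}, without spelling out the details. Your write-up supplies precisely those details, including the identification $(S\op)^\omega\leftrightarrow S_0$ and the annihilator computation $\D\mathbf P_t^{S\op}\cong\mathbf I_t$, so there is nothing to add.
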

                                                                                    
\subsection{Subposets and  adjoint functors}\label{adj} We recall known results and prove facts needed in the sequel.  For unexplained terminology, see ~\cite{M}.

If $R$ is a subset of a poset $S$,   the {\it restriction} (forgetful) functor $\res^S_R:S\spaces\to R\spaces$ sends an  $S$-space $\mathbf V=(V,V(s))_{s\in S}$ to the $R$-space $\res^S_R\mathbf V=(V,V(r))_{r\in R}$, and it sends a morphism $f:\mathbf V\to\mathbf W$ in $S\spaces$ given by a $k$-linear map $f:V\to W$, where  $\mathbf W=(W,W(s))_{s\in S}$, to the morphism $\res^S_Rf:\res^S_R\mathbf V\to\res^S_R\mathbf W$ in $R\spaces$ given by the same linear map $f:V\to W$.  

\begin{rmk}\label{adj2} The restriction functor $\res^S_R:S\spaces\to R\spaces$ preserves proper morphisms.
\end{rmk}

The functor  $\res^S_R$ has a left adjoint and a right adjoint. Denote by $\ind^S_R:R\spaces\to S\spaces$ the following functor.  For an $R$-space $\mathbf V=(V,V(r))_{r\in R}$, the $S$-space $\ind^S_R\mathbf V=(X,X(s))_{s\in S}$ is given by $X=V$ and $X(s)=\underset{r\in R,r\le s}\sum V(r)$ (if no $r\in R$ satisfies $r\le s$ then $X(s)=0$).  For a morphism $f:\mathbf V\to\mathbf W$ in $R\spaces$ given by a $k$-linear map $f:V\to W$, $\ind^S_Rf:\ind^S_R\mathbf V\to\ind^S_R\mathbf W$ is the morphism in $S\spaces$ given by the same linear map $f:V\to W$.  We call  $\ind^S_R$ the {\it induction} functor.

Denote by $\coind^S_R:R\spaces\to S\spaces$ the following functor.  For an $R$-space $\mathbf V=(V,V(r))_{r\in R}$, the $S$-space $\coind^S_R\mathbf V=(X,X(s))_{s\in S}$ is given by $X=V$ and $X(s)=\underset{r\in R,s\le r}\cap V(r)$ (if no $r\in R$ satisfies $s\le r$ then $X(s)=V$).  For a morphism $f:\mathbf V\to\mathbf W$ in $R\spaces$ given by a $k$-linear map $f:V\to W$, $\coind^S_Rf:\coind^S_R\mathbf V\to\coind^S_R\mathbf W$ is the morphism in $S\spaces$ given by the same linear map $f:V\to W$.  We call  $\coind^S_R$ the {\it coinduction} functor.

\begin{defn}\label{adj3}  An $S$-space $\mathbf V=(V,V(s))_{s\in S}$ is  {\em trivial} at $t\in S$ if $V(t)=0,$ and it is  {\em full} at $t$ if $V(t)=V.$  If $R$ is a subset of $S,$ then $\mathbf V$ is trivial (full) at $R$ if, for all $r\in R,$ $\mathbf V$ is trivial (full) at $r.$
\end{defn}
 
\begin{rmk}\label{adj4} For any $S$-space $\mathbf V,$ the set of elements of $S$ at which $\mathbf V$ is trivial is an ideal of $S,$ and the set of elements of $S$ at which $\mathbf V$ is full is a filter of $S.$
\end{rmk}

For future reference, the following two propositions record several easily verifiable facts (see ~\cite[Propositions 5.14 and 5.16, Exercise 5.24]{S}).

\begin{prop}\label{prel6}  Let $R$ be a subset of a poset $S$ and let $\mathbf V\in R\spaces,\mathbf W\in S\spaces$.
\begin{itemize}

\item[(a)] There exist isomorphisms of $k$-spaces 
$$\Hom_S(\ind^S_R\mathbf V,\mathbf W)\cong\Hom_R(\mathbf V,\res^S_R\mathbf W)$$  and $$\Hom_R(\res^S_R\mathbf W,\mathbf V)\cong\Hom_S(\mathbf W,\coind^S_R\mathbf V)$$ functorial in $\mathbf V$ and $\mathbf W.$  In other words, $\ind^S_R$  is a left adjoint of $\res^S_R$, and $\coind^S_R$  is a right adjoint of $\res^S_R$.

\item[(b)] $\res^S_R$ is a faithful additive functor, and $\ind^S_R$ and $\coind^S_R$ are fully faithful additive functors satisfying 
$$\res^S_R\ind^S_R=\res^S_R\coind^S_R=1_{R\spaces}.$$  
In particular, $\res^S_R$ is a dense functor, and both $\ind^S_R$ and $\coind^S_R$ reflect isomorphisms.

\item[(c)] If $R\subseteq T\subseteq S$ then 
$$\res^T_R\res^S_T=\res^S_R\mathrm{\ and\ }\res^S_R\ind^S_T=\res^S_R\coind^S_T=\res^T_R.$$

\item[(d)] Let $U\subseteq S$ and $S=R\cup U$.  If $\mathbf X,\mathbf Y\in S\spaces$ then $\mathbf X=\mathbf Y$ if and only if $\res^S_R\mathbf X=\res^S_R\mathbf Y$ and $\res^S_U\mathbf X=\res^S_U\mathbf Y$.

\item[(e)] Let $R$ be a filter of $S$ and let $\mathfrak C$ be the full subcategory  of $S\spaces$ determined by the $S$-spaces trivial at $S\setminus R.$  The restriction of $\res^S_R$ to $\mathfrak C,$ $\res^S_R|\mathfrak C:\mathfrak C\to R\spaces,$ is an equivalence of categories.

\item[(f)] Let $R$ be an ideal of $S$ and let $\mathfrak D$ be the full subcategory  of $S\spaces$ determined by the $S$-spaces full at $S\setminus R.$  The restriction of $\res^S_R$ to $\mathfrak D,$ $\res^S_R|\mathfrak D:\mathfrak D\to R\spaces,$ is an equivalence of categories.
\end{itemize}
\end{prop}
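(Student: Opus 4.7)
My plan is to verify each part in turn by unwinding the definitions of $\ind^S_R$ and $\coind^S_R$; no outside input beyond what is already in the excerpt is needed.

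For (a), I would check the first adjunction bijection directly: a morphism $f\colon \ind^S_R \mathbf V \to \mathbf W$ is a linear map $V \to W$ satisfying $f\bigl(\sum_{r \in R,\, r \le s} V(r)\bigr) \subseteq W(s)$ for every $s \in S$, and specializing to $s = r \in R$ yields a morphism $\mathbf V \to \res^S_R \mathbf W$; the converse is immediate, since for $r \le s$ one has $W(r) \subseteq W(s)$. The identity map on underlying linear maps is the required natural bijection, and naturality in both variables is transparent. The second isomorphism is dual: compatibility with $\bigcap_{r \in R,\, s \le r} V(r)$ reduces to compatibility at $R$ because $W(s) \subseteq W(r)$ whenever $s \le r$.

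For (b), $\res^S_R$ is faithful because it fixes underlying linear maps. The identities $\res^S_R \ind^S_R = \res^S_R \coind^S_R = 1_{R\spaces}$ are direct: at $r \in R$, the defining sum or intersection collapses to $V(r)$, since every $r' \in R$ with $r' \le r$ (respectively $r' \ge r$) satisfies $V(r') \subseteq V(r)$ (respectively $V(r') \supseteq V(r)$) in $\mathbf V$. Fully faithfulness of $\ind^S_R$ and $\coind^S_R$ then falls out of (a): $\Hom_S(\ind \mathbf V, \ind \mathbf W) \cong \Hom_R(\mathbf V, \res\ind \mathbf W) = \Hom_R(\mathbf V, \mathbf W)$, with the isomorphism realized by the functor itself. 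Density of $\res^S_R$ and the reflection of isomorphisms by $\ind^S_R$ and $\coind^S_R$ are consequences of $\res^S_R \ind^S_R = \res^S_R \coind^S_R = 1_{R\spaces}$. Part (c) is similar bookkeeping: in $\res^S_R \ind^S_T \mathbf V$, the subspace at $r \in R \subseteq T$ is $\sum_{t \in T,\, t \le r} V(t) = V(r)$, and dually for $\coind$.

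Part (d) records that an $S$-space is determined by its ambient space and the subspaces indexed by $S$, which the pair $\res^S_R, \res^S_U$ captures when $S = R \cup U$. For (e), the key observation is that when $R$ is a filter and $s \in S \setminus R$, no $r \in R$ satisfies $r \le s$ (else $s \in R$); hence the subspace of $\ind^S_R \mathbf V$ at $s$ is $0$, so $\ind^S_R$ factors through $\mathfrak C$. Combined with $\res^S_R \ind^S_R = 1_{R\spaces}$, this gives one direction of the equivalence; the other comes from (d), since for $\mathbf W \in \mathfrak C$ the $S$-spaces $\ind^S_R \res^S_R \mathbf W$ and $\mathbf W$ agree at every $r \in R$ and are both zero at every $s \in S \setminus R$. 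Part (f) is the dual: when $R$ is an ideal and $s \notin R$, no $r \in R$ satisfies $s \le r$, so the defining intersection for $\coind^S_R \mathbf V$ at $s$ is vacuous and equals $V$, placing $\coind^S_R$ in $\mathfrak D$, after which the same bookkeeping with $\res^S_R \coind^S_R = 1_{R\spaces}$ finishes the job.

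There is no serious obstacle here; the only content-bearing step, and the one I would present most carefully, is the observation in (e) and (f) that the filter and ideal hypotheses on $R$ are precisely what force $\ind^S_R$ and $\coind^S_R$ to produce the trivial-by-zero and full-by-$V$ extensions on $S \setminus R$. Everything else is driven by the adjunctions of (a) and the retraction identities of (b).
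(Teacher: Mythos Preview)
Your verification is correct. The paper does not actually prove this proposition: it is introduced as one of ``several easily verifiable facts'' with a reference to \cite[Propositions 5.14 and 5.16, Exercise 5.24]{S}, and no argument is given. Your direct unwinding of the definitions is precisely the routine check the authors have in mind, and every step (the adjunction bijections via the identity on underlying linear maps, the collapse of the defining sum/intersection at points of $R$, and the filter/ideal hypothesis forcing triviality/fullness on $S\setminus R$) is sound.
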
\vskip.1in

We will use the following statement in the section on projectivization.

\begin{prop}\label{prel6.5} Let $T$ be a subset of a poset $S.$
\begin{itemize}
\item[(a)]  For the poset $S_{S\setminus T}=(S\setminus T)\cup\hat a(T)$ we have $\res^{S_{S\setminus T}}_{\hat a(T)}\coind^{S_{S\setminus T}}_{S}=\coind^{\hat a(T)}_{T}\res^S_T.$
\item[(b)]  For the poset $S^{S\setminus T}=(S\setminus T)\cup\check a(T)$ we have $\res^{S^{S\setminus T}}_{\check a(T)}\ind^{S^{S\setminus T}}_{S}=\ind^{\check a(T)}_{T}\res^S_T.$
\end{itemize}
\end{prop}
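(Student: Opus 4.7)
The plan is to verify the claimed equalities of functors object-wise, since all four functors involved ($\res$, $\ind$, $\coind$, and their compositions) send a morphism to the same underlying $k$-linear map; hence agreement on morphisms follows at once from agreement on objects. Fix $\mathbf V=\bigl(V,V(s)\bigr)_{s\in S}\in S\spaces$.

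For part (a), fix $x\in\hat a(T)$ and write $x=t_1\wedge\dots\wedge t_m$ with $t_i\in T$ (Notation~\ref{prel2}). The partial order on $S_{S\setminus T}\subseteq\hat a(S)$ is the one from Proposition~\ref{prel1}(c). Unpacking the definition of coinduction, the subspace of $\coind^{S_{S\setminus T}}_S\mathbf V$ at $x$ is $\bigcap\{V(s)\st s\in S,\ x\le s\text{ in }\hat a(S)\}$. By Proposition~\ref{prel1}(c), the relation $x\le\{s\}$ holds iff there exists an $i$ with $t_i\le s$, so the indexing set equals $\bigcup_i\langle t_i\rangle$ (principal filter taken inside $S$), and the intersection splits as $\bigcap_{i=1}^m\bigcap_{s\in\langle t_i\rangle}V(s)$.

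The key collapse is $\bigcap_{s\in\langle t_i\rangle}V(s)=V(t_i)$: the inclusion $\subseteq$ holds because $t_i\in\langle t_i\rangle$, so $V(t_i)$ appears as one of the terms, while $\supseteq$ holds because $V(t_i)\subseteq V(s)$ whenever $s\ge t_i$. Hence the left-hand side of (a), evaluated at $x$, equals $V(t_1)\cap\dots\cap V(t_m)$. Repeating the same analysis with $T$ in place of $S$ and $\hat a(T)$ in place of $S_{S\setminus T}$ shows that $\coind^{\hat a(T)}_T\res^S_T\mathbf V$ at $x$ also equals $V(t_1)\cap\dots\cap V(t_m)$, giving (a).

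Part (b) is entirely dual: use Proposition~\ref{prel3}(c) in place of Proposition~\ref{prel1}(c), write $x=t_1\vee\dots\vee t_m\in\check a(T)$ (Notation~\ref{prel4}), replace intersections by sums and principal filters by principal ideals, and invoke the dual collapse $\sum_{s\le t_i}V(s)=V(t_i)$, which holds because $V(t_i)$ is one of the summands and contains every $V(s)$ with $s\le t_i$. I do not anticipate any genuine obstacle here; the only point that demands care is the opposite convention on $\hat{\mathcal A}(S)$ versus $\check{\mathcal A}(S)$, so one must apply the correct defining inequality when reducing ``$x\le\{s\}$'' or ``$\{s\}\le x$'' in the antichain poset to a condition on the components $t_i$ of $x$.
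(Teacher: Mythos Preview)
Your proposal is correct and follows essentially the same route as the paper: compute both composite functors on an arbitrary $S$-space $\mathbf V$ and show that at each antichain $x=t_1\wedge\dots\wedge t_m$ the relevant subspace equals $V(t_1)\cap\dots\cap V(t_m)$, using Proposition~\ref{prel1}(c) to identify the index set of the coinduction intersection; the paper's proof is the same computation, written slightly more tersely (it does not separate out the ``collapse'' $\bigcap_{s\ge t_i}V(s)=V(t_i)$ as a named step, but the reasoning is identical). Your remark that agreement on morphisms is automatic because all four functors leave the underlying linear map unchanged is exactly what the paper says as well.
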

\begin{proof} (a) For an $S$-space $\mathbf V=\bigl(V,V(s)\bigr)_{s\in S},$ set $\res^{S_{S\setminus T}}_{\hat a(T)}\coind^{S_{S\setminus T}}_{S}\mathbf V=\mathbf X=\bigl(X,X(A)\bigr)_{A\in \hat a(T)}$ and $\coind^{\hat a(T)}_{T}\res^S_T\mathbf V=\mathbf Y=\bigl(Y,Y(A)\bigr)_{A\in \hat a(T)}.$  To show that the two functors in question coincide on objects, we check that $\mathbf X=\mathbf  Y.$ 

Clearly, $X=Y=V.$ For each $A=a_1\wedge\dots\wedge a_m,\ m>0,$  we have $X(A)=\underset{s\in S,A\le s}\cap V(s)$.  By Proposition \ref{prel1}(c), $A\le s$ if and only if $a_i\le s,$ for some $i.$ It follows that $A\le s$ implies $V(a_i)\subseteq V(s),$ for some $i,$ whence $X(A)=\underset{s\in S,A\le s}\bigcap V(s)=\underset{i=1}{\overset{m}\bigcap} V(a_i)$.  A similar argument shows that  $Y(A)=\underset{i=1}{\overset{m}\bigcap} V(a_i)$.
 
 It follows immediately from the definitions of restriction and coinduction  that the two functors in question coincide on morphisms.
 
 (b) The proof is dual to that of (a).
\end{proof}

\begin{prop}\label{prel7} Let $R$ be a subset of a poset $S$ and denote by the same symbol $\D$ the duality on $S\spaces$ and on $R\spaces.$ Then the following diagrams commute.\vskip.1in

\centerline{$
\begin{CD}S\spaces@>\res_R^S>>R\spaces\\
@V \D VV@VV \D V\\
S\op\spaces@>\res_{R\op}^{S\op}>>R\op\spaces
\end{CD}
$\hskip.5in
$
\begin{CD}R\spaces@>\ind_R^S>>S\spaces\\
@V \D VV@VV \D V\\
R\op\spaces@>\coind_{R\op}^{S\op}>>S\op\spaces
\end{CD}
$}
\end{prop}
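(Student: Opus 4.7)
The plan is to prove each diagram separately by unpacking the definitions of $\res$, $\ind$, $\coind$, and the duality $\D$ from Section \ref{category}, and then checking agreement on objects and morphisms. Throughout, I will use the standard fact from linear algebra that for any family of subspaces $\{U_i\}$ of a finite-dimensional $k$-space $V$, one has $\bigl(\sum_i U_i\bigr)^\bot = \bigcap_i U_i^\bot.$

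For the first (restriction) diagram, I would take an arbitrary $S$-space $\mathbf V = \bigl(V, V(s)\bigr)_{s\in S}$ and compute both compositions directly. Going right-then-down gives $\D\res^S_R\mathbf V = \bigl(\D V, V(r)^\bot\bigr)_{r\in R}$ as an $R\op$-space, while going down-then-right gives $\res^{S\op}_{R\op}\D\mathbf V = \res^{S\op}_{R\op}\bigl(\D V, V(s)^\bot\bigr)_{s\in S} = \bigl(\D V, V(r)^\bot\bigr)_{r\in R}.$ These coincide on the nose. For a morphism $f\colon \mathbf U\to\mathbf V$ given by a linear map $f\colon U\to V$, both paths send it to $\D f\colon \D V\to \D U$, since both $\res^S_R$ and $\res^{S\op}_{R\op}$ preserve the underlying linear map and $\D$ sends it to $\D f$. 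So the first diagram commutes.

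For the second (induction/coinduction) diagram, take an $R$-space $\mathbf V = \bigl(V, V(r)\bigr)_{r\in R}$. Going right-then-down, one has $\ind^S_R\mathbf V = \bigl(V, X(s)\bigr)_{s\in S}$ with $X(s) = \sum_{r\in R,\,r\le s} V(r),$ so $\D\ind^S_R\mathbf V = \bigl(\D V, X(s)^\bot\bigr)_{s\in S}.$ Going down-then-right, one first has $\D\mathbf V = \bigl(\D V, V(r)^\bot\bigr)_{r\in R}$ viewed as an $R\op$-space; then $\coind^{S\op}_{R\op}\D\mathbf V = \bigl(\D V, Y(s)\bigr)_{s\in S}$ with
\[
Y(s) \;=\; \bigcap_{r\in R,\, s\,\underset{\mathrm{op}}{\le}\, r} V(r)^\bot \;=\; \bigcap_{r\in R,\, r\le s} V(r)^\bot,
\]
using that $s \underset{\mathrm{op}}{\le} r$ in $S\op$ means $r\le s$ in $S$. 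The identity $X(s)^\bot = Y(s)$ is now precisely the annihilator-of-a-sum identity applied to the family $\{V(r) \st r\in R,\,r\le s\}$, with the usual convention that an empty sum gives $0$ and an empty intersection gives $\D V$, matching the definitions of $\ind$ and $\coind$ in the respective edge cases. Morphisms are again handled trivially, since $\ind^S_R$, $\coind^{S\op}_{R\op}$, and the underlying-map description of $\D$ all preserve the underlying linear map up to taking the $k$-dual.

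No real obstacle appears; the only point that requires any care is correctly translating the quantifier $s\le r$ defining $\coind$ on $S\op$ into $r\le s$ on $S$, and matching the edge-case conventions (empty sum $=0$, empty intersection $=\D V$) on both sides so that the identity $\bigl(\sum U_i\bigr)^\bot = \bigcap U_i^\bot$ applies uniformly.
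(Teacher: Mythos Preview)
Your proof is correct. The paper does not supply a proof of this proposition at all: it is presented (together with Proposition~\ref{prel6}) as one of the ``easily verifiable facts'' recorded for future reference, with a pointer to~\cite{S}. Your direct unwinding of the definitions and use of the annihilator identity $(\sum_i U_i)^\bot=\bigcap_i U_i^\bot$ is exactly the routine verification the paper has in mind.
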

\vskip.1in

According to Proposition \ref{prel6}(b), $\res^S_R$ is a faithful and dense functor but, generally speaking, not a full functor.  However,  if $R$ is either a filter or an ideal of $S,$ the functor $\res^S_R$ has properties that can be viewed as a weak version of being full.

\begin{defn}\label{adj.5}  A functor $F:\mathfrak A\to\mathfrak B$ is said to be {\em right quasi full} if for every commutative diagram
$$ 
\begin{CD}Fu@>f>>Fv\\
@V \alpha VV@VV F\beta V\\
Fy@>g>>Fz
\end{CD}
$$
in $\mathfrak B$ there exist morphisms $f':u'\to v,\ g':y'\to z,\ \alpha':u'\to y'$ satisfying the following two conditions.
\begin{itemize}
\item[(a)] $f=Ff',\ g=Fg',\ \alpha=F\alpha'.$
\item[(b)] The diagram
$$ 
\begin{CD}u'@>f'>>v\\
@V \alpha' VV@VV \beta V\\
y'@>g'>>z
\end{CD}
$$
commutes in $\mathfrak A.$
\end{itemize}
We leave it to the reader to give the dual definition of a {\em left quasi full} functor.
\end{defn}

\begin{rem}\label{adj.6} (a) If $F:\mathfrak A\to\mathfrak B$ is a right quasi full functor, then for every morphism $f:Fu\to Fv$ in $\mathfrak B$ there exists a morphism $f':u'\to v$ satisfying $f=Ff'.$ Indeed, we can use the first  commutative diagram of Definition \ref{adj.5} by putting $g=f,\,\alpha=1_{Fu},$ and $\beta=1_v.$ Then (a) applies.

(b) A faithful functor is right quasi full if and only if it satisfies condition (a) of Definition \ref{adj.5}.

(c) A fully faithful functor is right quasi full.
\end{rem}

\begin{prop}\label{adj1} Let $R$ be an ideal of  a poset $S.$  For an $ R$-space $\mathbf U=\bigl(U,U(r)\bigr)_{r\in R}$ and an $S$-space $\mathbf V=\bigl(V,V(s)\bigr)_{s\in S},$  let $f:\mathbf U\to \res^S_R\mathbf V$ be a morphism  in $R\spaces$ that is given by a $k$-linear map $f:U\to V.$  Set $\mathbf U_f=\bigl(X,X(s)\bigr)_{s\in S}$ where  $X=U$,  $X(r)=U(r)$ for all $r\in R$, and   $X(t)=f^{-1}\bigl(V(t)\bigr)$ for all $t\in S\setminus R.$
\begin{itemize}
\item[(a)]  $\mathbf U_f$ is an $S$-space, and the linear map $f:U\to V$ gives a morphism $\hat f:\mathbf U_f\to\mathbf V$ in $S\spaces$ satisfying  $f=\res^S_R\hat f.$  Moreover, $f$ is a proper morphism (respectively, an isomorphism) if and only if so is $\hat f.$ 

\item[(b)] Consider a commutative diagram of $R$-spaces of the form
$$ 
\begin{CD}
\mathbf U@>f>>\res^S_R\mathbf V\\
@V{\alpha}VV@VV{\res^S_R\beta}V\\
\mathbf Y@>g>>\res^S_R\mathbf Z
\end{CD}
$$
 where $\mathbf Y=\bigl(Y,Y(r)\bigr)_{r\in R}\in R\spaces,\ \mathbf Z=\bigl(Z,Z(s)\bigr)_{s\in S}\in S\spaces,$ and the  morphisms $g:\mathbf Y\to \res^S_R\mathbf Z,\ \alpha:\mathbf U\to\mathbf Y,$ and $ \beta:\mathbf V\to\mathbf Z$ are given by  $k$-linear maps $g:Y\to Z,\ \alpha:U\to Y,$ and $\beta:V\to Z,$  respectively. The linear map $\alpha:U\to Y$ gives a morphism $\alpha':\mathbf U_f\to\mathbf Y_g$ in $S\spaces$ satisfying $\alpha=\res^S_R\alpha'$ and $\beta\hat f=\hat g\alpha'.$  In particular, the functor $\res^S_R$ is right quasi full.
  
\item[(c)] In the setting of (b), suppose $\beta$ is an isomorphism. Then $\mathbf U_f=\mathbf U_\alpha$ and $\alpha'=\hat \alpha:\mathbf U_\alpha \to\mathbf Y_g.$ 

\item[(d)] In the setting of (a), $f$ is right minimal if and only if so is $\hat f.$

\item[(e)] If $F$ is a filter of $S$ and $\mathbf V\cong\ind^S_F\mathbf W$, for some $\mathbf W\in F\spaces$, then  the $S$-space $\mathbf U_f=(X,X(s))$ of (a) satisfies $X(s)\subset\Ker f,$ for all $s\in R\setminus F$, and  $X(s)=\Ker f,$ for all $s\in S\setminus [R\cup F]$.
\end{itemize}
\end{prop}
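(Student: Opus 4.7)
My plan is to dispatch parts (a) and (e) by direct construction, then prove (b) as the central claim with (c) following quickly, and finally handle (d), which is the main obstacle.

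For part (a), I first verify that $\mathbf U_f$ is a well-defined $S$-space by checking the containment $X(s) \subseteq X(t)$ whenever $s \le t$, splitting into cases on membership in $R$. The case $s \in S \setminus R$, $t \in R$ cannot arise since $R$ is an ideal; the mixed case $s \in R$, $t \in S \setminus R$ uses $f(U(s)) \subseteq V(s) \subseteq V(t)$, so $U(s) \subseteq f^{-1}(V(t))$. The linear map $f$ then defines $\hat f: \mathbf U_f \to \mathbf V$ because $f(f^{-1}(V(t))) \subseteq V(t)$ tautologically; properness at $t \in S \setminus R$ is automatic since the image is already $V(t) \cap f(U)$, and properness at $r \in R$ transfers directly from $f$. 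The isomorphism equivalence follows since isomorphisms in $S\spaces$ are precisely the proper bijections on ambient spaces. Part (e) I would handle immediately, observing that $F$ being a filter forces $V(s) = 0$ for $s \notin F$ when $\mathbf V \cong \ind^S_F \mathbf W$, which yields $U(r) \subseteq \Ker f$ for $r \in R \setminus F$ and $f^{-1}(V(s)) = \Ker f$ for $s \in S \setminus [R \cup F]$.

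For part (b), the central task is to show the linear map $\alpha$ gives a morphism $\alpha': \mathbf U_f \to \mathbf Y_g$. The only nontrivial check is at $t \in S \setminus R$, where I compute $g\alpha(X(t)) = \beta f(X(t)) \subseteq \beta(V(t)) \subseteq Z(t)$ using commutativity of the given square and the morphism property of $\beta$, hence $\alpha(X(t)) \subseteq g^{-1}(Z(t)) = Y_g(t)$. Right quasi fullness of $\res^S_R$ follows by applying this construction to an arbitrary commutative square as in Definition \ref{adj.5}, using the density of $\res^S_R$ to realize the $R$-space objects as restrictions. Part (c) is a quick consequence: since an isomorphism $\beta$ in $S\spaces$ is a proper bijection we have $\beta(V(t)) = Z(t)$, and combining this with $\beta f = g\alpha$ gives $f^{-1}(V(t)) = \alpha^{-1}(g^{-1}(Z(t)))$, which is exactly the subspace defining $\mathbf U_\alpha$ relative to the $S$-space structure $\mathbf Y_g$.

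Part (d) is where the main obstacle lies. The forward direction --- right minimality of $\hat f$ implies right minimality of $f$ --- uses part (b) applied to the square with both rows equal to $f$ and right-hand vertical $1_{\mathbf V}$ to lift any $g: \mathbf U \to \mathbf U$ satisfying $fg = f$ to $g': \mathbf U_f \to \mathbf U_f$ with $\hat f g' = \hat f$ and $\res^S_R g' = g$; then $g'$ is an iso by hypothesis, and so is $g$. The reverse direction is harder: given $g: \mathbf U_f \to \mathbf U_f$ with $\hat f g = \hat f$, restricting shows $\res^S_R g$ is an automorphism of $\mathbf U$ by right minimality of $f$, so the underlying linear map of $g$ is bijective; but to conclude $g$ is an isomorphism in $S\spaces$ I must verify $g(X(s)) = X(s)$ at each $s$. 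At $s \in R$ this follows from properness of the iso $\res^S_R g$, while at $s \in S \setminus R$ the identity $fg = f$ together with bijectivity of $g$ forces $g(f^{-1}(V(s))) = f^{-1}(V(s))$. This final check --- that $g$ preserves the newly introduced subspaces in the $\mathbf U_f$ construction --- is the one point that genuinely exploits the specific form $X(t) = f^{-1}(V(t))$ rather than any purely formal reasoning.
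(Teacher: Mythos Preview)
Your proof is correct and follows the paper's approach in all essential respects; parts (a), (b), (c), and (e) match the paper's arguments almost verbatim. The one minor divergence is in part (d): for the implication ``$f$ right minimal $\Rightarrow$ $\hat f$ right minimal'' the paper invokes (c) to identify the given endomorphism $\alpha'$ of $\mathbf U_f$ with $\hat\alpha$ and then applies (a), whereas you verify directly that the underlying bijection preserves each $X(s)=f^{-1}(V(s))$ using $fg=f$; this is equivalent in content and arguably more transparent. (Your appeal to density of $\res^S_R$ in deducing right quasi fullness is unnecessary---the objects $u,v,y,z$ in Definition~\ref{adj.5} are already in $S\spaces$, so one simply takes $\mathbf U=\res^S_R u$, $\mathbf Y=\res^S_R y$---but it does no harm.)
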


\begin{proof} (a)   We only have to check that $\mathbf U_f$ is an $S$-space. Let $t_1\le t_2$ where $t_1,t_2\in S$.  If $t_1,t_2\in R$ or $t_1,t_2\in S\setminus R$, the inclusion  $X(t_1)\subseteq X(t_2)$ is obvious.  Since $R$ is an ideal of $S$, the case $t_1\in S\setminus R, \ t_2\in R$ is impossible.  If $t_1\in R,\ t_2\in S\setminus R$, then
$$X(t_1)= U(t_1)\subseteq f^{-1}[f(U(t_1)]\subseteq f^{-1}(V(t_1))\subseteq f^{-1}(V(t_2))=X(t_2).$$ 

If $\hat f$ is proper, then $f$ is proper by Remark \ref{adj2}.  If  $f$ is proper, the linear  map $f:X\to V$ satisfies $f(X(s))=V(s)\cap f(X)$ for all $s\in S$ by construction, whence $\hat f$ is proper.  

We leave it to the reader to consider the case when either $f$ or $\hat f$ is an isomorphism. 

(b)  Since restriction is a faithful functor, Remark \ref{adj.6}(b) says that  we only have to check that $\alpha(f^{-1}(V(t)))\subseteq g^{-1}(Z(t))$ for all $t\in S\setminus R.$  Suppose $u\in U$ satisfies $f(u)\in V(t).$  Since the diagram in the statement of the lemma commutes, we get $g(\alpha(u))=\beta(f(u))\in Z(t)$ because $\beta$ is a morphism in $S\spaces.$  Hence $\alpha(u)\in g^{-1}(Z(t)).$ 

(c) For the $S$-space $\mathbf Y_g,$ the subspace of the ambient space $Y$ associated with an element $t\in S\setminus R$  is $g^{-1}(Z(t)).$  Since $\beta$ is an isomorphism by assumption, $\beta^{-1}(Z(t))=V(t)$ and we have 
$$\alpha^{-1}[g^{-1}(Z(t))]=f^{-1}[\beta^{-1}(Z(t))]=f^{-1}(V(t))=X(t),$$ whence $\mathbf U_f=\mathbf U_\alpha.$
 
(d) Suppose $f$ is right minimal and $\hat f=\hat f\alpha',$ for some morphism $\alpha':\mathbf U_f\to \mathbf U_f$ in $S\spaces.$   After applying $\res^S_R$ we are in the setting of (b) where $g=f,\,\beta=1_{\mathbf V},$ and $\alpha=\res^S_R\alpha':\mathbf U\to \mathbf U.$  Since $f$  is right minimal, $f=f\alpha$ implies $\alpha$ is an isomorphism.  By (c), $\alpha'=\hat\alpha$ whence $\alpha'$ is an isomorphism according to (a).  Thus $\hat f$ is right minimal.

Suppose $\hat f$ is right minimal and $ f= f\alpha,$ for some morphism $\alpha:\mathbf U\to \mathbf U$ in $R\spaces.$  Since $f=\res^S_R1_{\mathbf V}\circ f,$ (c) says that $\hat f=\hat f\hat\alpha$ whence $\hat\alpha$ is an isomorphism.  Then $\alpha$ is an isomorphism by (a).  Therefore $f$ is right minimal.

(e) By the definition of the induction functor, $V(s)=0$ for all $s\in S\setminus F.$
\end{proof}

For the sake of completeness we present the dual statement.

\begin{prop}\label{adj1.5} Let $R$ be a filter of  a poset $S.$  For an $S$-space $\mathbf U=(U,U(s))_{s\in S}$ and an $R$-space $\mathbf V=(V,V(r))_{r\in R},$  let $g:\res_R^S\mathbf U\to\mathbf V$ be a morphism  in $R\spaces$ that is given by a $k$-linear map $g:U\to V$.  Set $\mathbf V^g=(Y,Y(s))_{s\in S}$ where  $Y=V$,  $Y(r)=V(r)$ for all $r\in R$, and   $Y(t)=g(U(t))$ for all $t\in S\setminus R.$
\begin{itemize}

\item[(a)]  $\mathbf V^g$ is an $S$-space, and the linear map $g:U\to V$ gives a morphism $\check g:\mathbf U\to\mathbf V^g$ in $S\spaces$ satisfying  $g=\res^S_R\check g.$  Moreover, $g$ is a proper morphism  (respectively, an isomorphism) if and only if so is $\check g.$

\item[(b)] Consider a commutative diagram of $R$-spaces of the form
$$ 
\begin{CD}
\res^S_R\mathbf U@>g>>\mathbf V\\
@V{\res^S_R\beta}VV@VV{\alpha}V\\
\res^S_R\mathbf X@>h>>\mathbf Z
\end{CD}
$$    
where $\mathbf X=(X,X(s))\in S\spaces,\ \mathbf Z=(Z,Z(r))\in R\spaces,$ and the morphisms $h:\res^S_R\mathbf X\to\mathbf Z,\ \alpha:\mathbf V\to\mathbf Z,$ and $\beta:\mathbf U\to\mathbf X$ are given by  $k$-linear maps $h:X\to Z,\ \alpha:V\to Z,$ and $\beta:U\to X,$ respectively.  The linear map $\alpha:V\to Z$ gives a morphism $\alpha':\mathbf V^g\to\mathbf Z^h$ in $S\spaces$ satisfying $\alpha=\res^S_R\alpha'$ and $\check h\beta=\alpha \check g.$  In particular, the functor $\res^S_R$ is left quasi full.

\item[(c)] In the setting of (b), suppose $\beta$ is an isomorphism. Then $\mathbf Z^h=\mathbf Z^\alpha$ and $\alpha'=\check\alpha:\mathbf V^g \to\mathbf Z^\alpha.$ 

\item[(d)] In the setting of (a), $g$ is left minimal if and only if so is $\check g.$

\item[(e)] If $J$ is an ideal of $S$ and $\mathbf V\cong\coind^S_J\mathbf W$, for some $\mathbf W\in J\spaces$, then  the $S$-space $\mathbf V^g=(Y,Y(s))$ of (a) satisfies $Y(s)\supset\Image g,$ for all $s\in R\setminus J$, and  $Y(s)=\Image g,$ for all $s\in S\setminus [R\cup J]$.
\end{itemize}
\end{prop}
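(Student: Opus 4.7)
The plan is to derive all five parts by applying the duality $\D:S\spaces\to S\op\spaces$ of Definition \ref{cat4} to Proposition \ref{adj1}. A subset $R$ of $S$ is a filter if and only if it is an ideal of $S\op$, and Proposition \ref{prel7} yields $\D\circ\res^S_R=\res^{S\op}_{R\op}\circ\D$, so the present setup translates directly into that of Proposition \ref{adj1} for the opposite poset: the given $g:\res^S_R\mathbf U\to\mathbf V$ in $R\spaces$ produces $\D g:\D\mathbf V\to\res^{S\op}_{R\op}\D\mathbf U$ in $R\op\spaces$, to which Proposition \ref{adj1} applies.

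For part (a), I would invoke \ref{adj1}(a) on $\D g$ to obtain the $S\op$-space $(\D\mathbf V)_{\D g}$ together with the morphism $\widehat{\D g}:(\D\mathbf V)_{\D g}\to\D\mathbf U$, and set $\mathbf V^g:=\D\bigl((\D\mathbf V)_{\D g}\bigr)$ and $\check g:=\D\widehat{\D g}$. Matching the explicit description in the statement requires a short computation in $\D V$: for $r\in R$, $(\D\mathbf V)_{\D g}(r)=V(r)^\bot$ dualizes back to $V(r)$, and for $t\in S\setminus R$,
$$(\D\mathbf V)_{\D g}(t)=(\D g)^{-1}\bigl(U(t)^\bot\bigr)=\{\phi\in\D V\st\phi\bigl(g(U(t))\bigr)=0\}=g(U(t))^\bot,$$
whose $\D$-image in $V$ is $g(U(t))$, exactly the $Y(t)$ of the statement. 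Properness and the isomorphism assertion then follow from Proposition \ref{cat10} combined with \ref{adj1}(a).

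Parts (b) and (c) are handled by dualizing the given commutative square in $R\spaces$ to one in $R\op\spaces$ and invoking the corresponding parts of \ref{adj1}; the morphism produced on the $S\op$-side dualizes back to the required $\alpha'$, and the identification $\mathbf Z^h=\mathbf Z^\alpha$ follows from $(\D\mathbf Z)_{\D h}=(\D\mathbf Z)_{\D\alpha}$ in the opposite setting. Part (d) uses the standard observation that $g$ is left minimal if and only if $\D g$ is right minimal, together with \ref{adj1}(d). For (e), reading the hypothesis as $\mathbf U\cong\coind^S_J\mathbf W$ (the type-consistent dual of \ref{adj1}(e)), Proposition \ref{prel7} gives $\D\mathbf U\cong\ind^{S\op}_{J\op}\D\mathbf W$, and dualizing the conclusion of \ref{adj1}(e) interchanges kernels with images to yield precisely the asserted statements about $Y(s)$.

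The main obstacle will be the bookkeeping in part (a): verifying that $\mathbf V^g$, defined as $\D\bigl((\D\mathbf V)_{\D g}\bigr)$, agrees under the canonical identification $\D\D V\cong V$ with the explicit description in the statement, and in particular the orthogonality identity $(\D g)^{-1}(U(t)^\bot)=g(U(t))^\bot$ displayed above. Once that identification is secured, every remaining part is a mechanical translation under $\D$; as a fallback, should the duality bookkeeping prove cumbersome, one can instead prove each item directly by an argument that parallels the proof of \ref{adj1} line by line, with preimages and kernels replaced by images throughout.
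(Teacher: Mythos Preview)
Your approach is correct and is precisely what the paper does: its entire proof of Proposition \ref{adj1.5} is the single line ``Dual to the proof of Proposition \ref{adj1},'' and your plan implements this concretely via the duality $\D$ (with the direct line-by-line dualization offered as a fallback). You also correctly caught the typo in part (e): since $\mathbf V$ is only an $R$-space while $\coind^S_J\mathbf W$ is an $S$-space, the hypothesis must be $\mathbf U\cong\coind^S_J\mathbf W$, exactly the type-consistent dual of \ref{adj1}(e).
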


\begin{proof}  Dual to the proof of Proposition \ref{adj1}.
\end{proof}

\section{Semisimple $S$-spaces}\label{onedim}

\begin{defn}\label{onedim.5} A nonzero $S$-space $\mathbf V$  is {\em (relatively) simple} if every nonzero proper monomorphism $\mathbf U\to\mathbf V$ in $S\spaces$ is an isomorphism.  An $S$-space is {\em (relatively) semisimple} if it is isomorphic to a direct sum of simple $S$-spaces, and we denote by $S\sem$ the full subcategory of $S\spaces$ determined by the semisimple $S$-spaces.  
\end{defn} 

\begin{rmk}\label{onedim.7} By Proposition \ref{cat10}, $\mathbf V$ is simple if and only if every nonzero proper epimorphism $\mathbf V\to\mathbf W$ in $S\spaces$ is an isomorphism. By Proposition \ref{cat3}, an $S$-space is simple if and only if  its ambient space is one-dimensional.  By Theorems \ref{projectives}(b) and  \ref{injectives}(a),  every projective and every injective $S$-space is semisimple.
\end{rmk}

It is very easy to classify the  simple $S$-spaces up to isomorphism. Let  $U$ be a finite dimensional $k$-vector space.  For each $F\in{\mathcal F}(S)$, denote by $U_F=U_{\min F}$ the $S$-space $\mathbf X=\bigl(X,X(s)\bigr)_{s\in S}$ where 
$$
X(s)=
\begin{cases}
U & \text{if $s\in F$,}\\
0 & \text{if $s\not\in F$,}
\end{cases}
$$
and, for each $I\in{\mathcal I}(S)$, denote by  $U^I=U^{\max I}$ the $S$-space $\mathbf Y=\bigl(Y,Y(s)\bigr)_{s\in S}$ where 
$$
Y(s)=
\begin{cases}
0 & \text{if $s\in I$,}\\
U & \text{if $s\not\in I$;}
\end{cases}
$$
the above notation makes sense because, in light of Propositions \ref{prel1}(a) and \ref{prel3}(a), each filter (respectively, ideal) of $S$ is uniquely determined by the antichain of its minimal (respectively, maximal) elements.

Note that for each $F\in\mathcal F(S)$ we have $U_F=U^{S\setminus F},$ and for each $I\in\mathcal I(S)$ we have $U^I=U_{S\setminus I}.$

\begin{prop}\label{onedim1}  The set $\{k_A\,|\,A\in{\mathcal A}(S)\}=\{k^A\,|\,A\in{\mathcal A}(S)\}$ is a complete set of representatives of the isomorphism classes of simple $S$-spaces.
\end{prop}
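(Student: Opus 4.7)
The plan is to prove three things: each $k_A$ is a simple $S$-space; every simple $S$-space is isomorphic to some $k_A$; and distinct antichains yield nonisomorphic simple $S$-spaces. The equality $\{k_A\}=\{k^A\}$ will then follow by translating between filters and ideals via complementation.

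First I would invoke Remark \ref{onedim.7} to reduce to the case in which the ambient space of a simple $S$-space is one-dimensional; up to isomorphism it may be taken to be $k$ itself. For such an $S$-space $\mathbf V=\bigl(k,V(s)\bigr)_{s\in S}$, each $V(s)$ is either $0$ or $k$, and by Remark \ref{adj4} the set $F=\{s\in S\,|\,V(s)=k\}$ is a filter of $S$. Thus $\mathbf V=k_F$, and by Proposition \ref{prel1}(a) we have $F=\langle A\rangle$ for the unique antichain $A=\min F$, so $\mathbf V=k_A$. This shows every simple $S$-space occurs in the list.

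Next I would verify that each $k_A$ is indeed simple. Let $f\colon\mathbf U\to k_A$ be a nonzero proper monomorphism. By Proposition \ref{cat1}(a) the underlying linear map $U\to k$ is injective, so $\dim U=1$ and this linear map is an isomorphism. Properness (Definition \ref{cat2}) then gives $f(U(s))=V(s)\cap f(U)=V(s)$ for every $s\in S$, so $f$ is an isomorphism in $S\spaces$. For nonisomorphism across distinct antichains, observe that any isomorphism $k_A\to k_B$ in $S\spaces$ is given by a nonzero scalar on $k$, which preserves every subspace of $k$; hence the associated filters coincide, $\langle A\rangle=\langle B\rangle$, and Proposition \ref{prel1}(a) forces $A=B$.

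Finally, the observation preceding the proposition records that $k_F=k^{S\setminus F}$ for every filter $F$. Combined with Propositions \ref{prel1}(a) and \ref{prel3}(a), which biject $\mathcal A(S)$ with $\mathcal F(S)$ and with $\mathcal I(S)$, complementation sends $\langle A\rangle$ to the ideal $S\setminus\langle A\rangle=(B)$ for a unique $B\in\mathcal A(S)$, giving $k_A=k^B$. As $A$ ranges over $\mathcal A(S)$, so does $B$, establishing the equality of the two sets. I do not foresee any serious obstacle; the only delicate point is the explicit use of properness in verifying simplicity, since without it a surjective underlying linear map would not suffice to force the subspaces $U(s)$ to equal $V(s)$.
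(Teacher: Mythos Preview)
Your proposal is correct and follows essentially the same approach as the paper. The paper's argument is slightly more streamlined: it simultaneously considers the filter $F$ of full points and the ideal $I$ of trivial points, noting $F\cup I=S$ and $F\cap I=\emptyset$ so that $\mathbf V\cong k_{\min F}=k^{\max I}$ in one stroke, which immediately yields the equality of the two indexing sets; it also skips your direct verification that each $k_A$ is simple, since this is already contained in Remark~\ref{onedim.7} (the ``if and only if'' there).
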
  
\begin{proof}
Let ${\mathbf V}$ be a simple  $S$-space.  In view of Remark \ref{adj4}, the set  $F$ of elements of $S$ at which $\mathbf V$ is full is a filter, and the set  $I$ of elements of $S$  at which $\mathbf V$ is trivial is an ideal, of $S.$  By Remark \ref{onedim.7}, the ambient space of ${\mathbf V}$ is one-dimensional, whence $F\cup I=S$ and $F\cap I=\emptyset$.  Therefore, ${\mathbf V}\cong k_{\min F}=k^{\max I}$.  It is clear that if $A, B\in{\mathcal A}(S)$ and $A\ne B$, then  $k_A\not\cong k_B$ and $k^A\not\cong k^B$.
\end{proof}

The following statement was proved in  ~\cite{NR}.

\begin{prop}\label{width2}
Every  $S$-space is semisimple if and only if $w(S)\leq 2$.
\end{prop}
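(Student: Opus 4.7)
The claim is an equivalence, which I treat in two directions.

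For the contrapositive of the only-if direction, I would assume $w(S)\ge 3$ and pick a three-element antichain $R=\{a,b,c\}\subseteq S$. In $R\spaces$ I consider the $R$-space $\mathbf U$ with ambient space $k^2$ and three pairwise distinct one-dimensional subspaces $U(a),U(b),U(c)$. A pigeonhole argument shows $\mathbf U$ is indecomposable: any non-trivial decomposition $\mathbf U\cong\mathbf U'\oplus\mathbf U''$ would force $k^2=U'\oplus U''$ into two one-dimensional summands, and each of the three one-dimensional $U(x)$ would have to coincide with one of them, so two of the three lines would agree, a contradiction. Since $\ind^S_R$ is fully faithful (Proposition \ref{prel6}(b)), $\End_S(\ind^S_R\mathbf U)\cong\End_R(\mathbf U)$ is local; hence $\ind^S_R\mathbf U$ is indecomposable, and the induction functor preserves the ambient space so it remains $k^2$. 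By Remark \ref{onedim.7} it is not simple, and the indecomposable summands of any semisimple $S$-space are simple, so $\ind^S_R\mathbf U$ is not semisimple.

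For the if-direction, I would assume $w(S)\le 2$ and induct on $|S|$ to show that every $\mathbf V=(V,V(s))_{s\in S}$ admits a basis $\{e_1,\dots,e_n\}$ of $V$ in which each $V(s)$ is the span of a subset of the basis; such a basis decomposes $\mathbf V$ into the one-dimensional simple $S$-subspaces $ke_i$. The base $|S|=0$ is trivial. For the step, pick $p$ maximal in $S$ and set $S'=S\setminus\{p\}$, so $w(S')\le 2$. Apply the inductive hypothesis to $\res^S_{S'}\mathbf V$ to obtain a basis $\{e_1,\dots,e_n\}$ of $V$ adapted to all $V(s),\ s\in S'$; set $I_s=\{i:e_i\in V(s)\}$, $J=\bigcup_{s<p}I_s$, and split $V=V_J\oplus V_{J^c}$ along the basis. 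Since $V(p)\supseteq V_J$, one has $V(p)=V_J\oplus U$ with $U=V(p)\cap V_{J^c}$; for $s<p$ the subspace $V(s)$ already lies in $V_J$, so the only subspaces of $V_{J^c}$ still requiring diagonalization are $U$ together with the family $\{V(s)\cap V_{J^c}:s\in T\}$, where $T=\{s\in S':s\text{ incomparable to }p\}$. If two elements of $T$ were incomparable, they would form a three-element antichain of $S$ together with $p$, violating $w(S)\le 2$; hence $T$ is a chain and the family is a flag in $V_{J^c}$. Replacing $\{e_i:i\notin J\}$ by a basis $\{f_1,\dots,f_m\}$ of $V_{J^c}$ adapted simultaneously to this flag and to the single subspace $U$---a stratum-by-stratum flag-plus-subspace diagonalization---yields a basis of $V$ that adapts every $V(s)$, $s\in S$.

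The main technical obstacle is the edge case $T=S'$, in which $S$ is a disjoint union of the chain $S'$ and the isolated point $p$, so $J=\emptyset$ and the outer induction gives no simplification. There the argument must directly carry out the flag-plus-subspace diagonalization on all of $V$ by an inner induction on the length of the flag: at each consecutive quotient $V_i/V_{i-1}$ one refines the chosen basis so that the image of $U$ becomes a coordinate subspace, then lifts. Once this inner diagonalization is in hand, the two pieces assemble to give the classical result of ~\cite{NR}.
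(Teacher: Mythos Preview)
The paper gives no proof of this proposition; it simply attributes the statement to \cite{NR}. There is therefore no argument to compare against, only your own to assess.

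Your proof is essentially correct. The only-if direction is clean: the three-line configuration in $k^2$ is indecomposable by the pigeonhole reasoning you give, and full faithfulness of $\ind^S_R$ transports indecomposability while preserving the two-dimensional ambient space, so the result is not simple and hence not semisimple. For the if-direction your scheme is sound: removing a maximal $p$, the set $T$ of elements incomparable to $p$ is forced to be a chain (else a $3$-antichain with $p$), and the task on $V_{J^c}$ is exactly a flag-plus-single-subspace diagonalization. The one place that needs sharpening is the phrase ``then lifts'' in your inner induction. When you adapt the basis of a stratum $W_i/W_{i-1}$ so that the image of $U\cap W_i$ is a coordinate subspace, the basis vectors lying in that image must be lifted \emph{into $U\cap W_i$}, not arbitrarily; an arbitrary lift can fail to make $U$ a coordinate subspace (e.g.\ $W_1=\langle e_1\rangle\subset V=\langle e_1,e_2\rangle$ with $U=\langle e_1+e_2\rangle$: lifting $\bar e_2$ to $e_2$ rather than to $e_1+e_2$ leaves $U$ unadapted). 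With that specification the stratum-by-stratum procedure works, and your ``edge case'' $T=S'$ is no harder than the generic case---it simply means $J=\emptyset$ and the flag-plus-subspace step is carried out on all of $V$.
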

 
 We now study morphisms of semisimple $S$-spaces into arbitrary $S$-spaces.  Set 
 $$\mathcal U=\underset{A\in\hat{\mathcal A}(S)}\bigoplus k_A=\underset{A\in\check{\mathcal A}(S)}\bigoplus k^A.$$
  
 In the following two propositions we identify an element $\lambda\in k$ with the multiplication-by-$\lambda$ map $\lambda1_k: k\to k.$  

\begin{prop}\label{onedim2}  Let $\mathbf {V}=\bigl(V, V(s)\bigr)_{s\in S}$ be  an $S$-space. 
\begin{itemize} 

\item[(a)] Let  $A=a_1\wedge\dots\wedge a_m$ be in $\hat{\mathcal A}(S)$ and set $V(A)=V(a_1)\cap\dots\cap V(a_m).$ A $k$-linear map $f:k\to 
V$ gives a morphism ${k}_A\to\mathbf{V}$ of $S$-spaces if and only if $f(1)\in V(A)$.  The map $f\mapsto f(1)$ is an isomorphism $\Hom_{S\spaces}({k}_A,\mathbf{V})\cong V(A)$ of $k$-spaces functorial in $\mathbf{V}.$  We identify $\Hom_{S\spaces}(\mathcal U,\mathbf{V})$ with $\underset{C\in\hat{\mathcal A}(S)}\bigoplus V(C)$ and write the  elements of the latter as  row vectors $(v_C)_{C\in\hat{\mathcal A}(S)}$  where $v_C\in V(C).$ 

\item [(b)]   If $A, B\in \hat{\mathcal A}(S)$ then $\Hom_{S\spaces}({k}_A,{k}_B)=\begin{cases} k &\mathrm{if}\ B\leq A,\\
0 &\mathrm{otherwise}.\end{cases}$ 

\item [(c)]  We identify $\End_{{S\spaces}} \mathcal U$ with $k\hat{\mathcal A}(S)^{\mathrm{op}}$ by identifying $\Hom_{S\spaces}({k}_A,{k}_B)$    with the subspace $ke_{BA}$  of the matrix algebra $M_{\hat{\mathcal A}(S)}(k),$ for all $B\le A$ in $\hat{\mathcal A}(S).$  Then $\Hom_{S\spaces}(\mathcal U,\mathbf{V})$ is a left $k\hat{\mathcal A}(S)$-module by means of $e_{AB}\circ\bigl[(v_C)_{C\in\hat{\mathcal A}(S)}\bigr]=\bigl(\delta_{CA}v_B\bigr)_{C\in\hat{\mathcal A}(S)},$ where  $B\le A$  and $\delta_{CA}$ is the Kronecker symbol.

\item[(d)]  
$\Phi_{\hat{a}(S)}\Hom_{S\spaces}({\mathcal U},-)\cong\coind^{\hat{a}(S)}_S:S\spaces\to\hat{a}(S)\spaces$.
\end{itemize}
\end{prop}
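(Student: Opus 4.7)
The plan is to unpack the two sides and identify them directly on objects and morphisms. First, I would observe that the meet-semilattice $\hat{\mathcal A}(S)$ is canonically the poset $\hat a(S)^\omega$ of Definition \ref{cat6} with $\omega=\emptyset$: indeed, by Proposition \ref{prel1}(c)(ii) $\emptyset$ is the unique maximal element of $\hat{\mathcal A}(S)$ and lies strictly above every element of $\hat a(S)$. Under this identification, the left $k\hat{\mathcal A}(S)$-module structure supplied by part (c) is exactly a $k\hat a(S)^\omega$-module structure, so $\Phi_{\hat a(S)}\Hom_{S\spaces}(\mathcal U,\mathbf V)$ is well-defined.

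Next I would compute both sides explicitly on an arbitrary $S$-space $\mathbf V=\bigl(V,V(s)\bigr)_{s\in S}$. Using the formula $e_{AB}\circ(v_C)_C=(\delta_{CA}v_B)_C$ from (c), the ambient space of $\Phi_{\hat a(S)}\Hom_{S\spaces}(\mathcal U,\mathbf V)$ is $e_{\emptyset\emptyset}\Hom_{S\spaces}(\mathcal U,\mathbf V)$, which is the summand $V(\emptyset)$; but $V(\emptyset)=\bigcap_{a\in\emptyset}V(a)=V$ by the convention that the empty intersection inside $V$ is all of $V$. For $A=a_1\wedge\dots\wedge a_m\in\hat a(S)$, we have $A\le\emptyset$, and $e_{\emptyset A}\Hom_{S\spaces}(\mathcal U,\mathbf V)$ sits in the $\emptyset$-summand and equals $V(A)=V(a_1)\cap\dots\cap V(a_m)$. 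On the other side, $\coind^{\hat a(S)}_S\mathbf V$ has ambient space $V$ and, at $A\in\hat a(S)$, subspace $\bigcap_{s\in S,\ A\le s}V(s)$.

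To identify the two subspaces, I would use Proposition \ref{prel1}(c): for $s\in S$, $A\le s$ iff $a_i\le s$ for some $i$, so on the one hand each $a_i$ itself satisfies $A\le a_i$, giving $\bigcap_{s\in S,A\le s}V(s)\subseteq V(a_1)\cap\dots\cap V(a_m)$; and on the other hand, whenever $A\le s$ the corresponding index $i$ yields $V(a_i)\subseteq V(s)$, hence $V(a_1)\cap\dots\cap V(a_m)\subseteq V(s)$ for every such $s$, giving the reverse inclusion. Thus the two $\hat a(S)$-spaces agree on objects.

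Finally I would verify naturality. For a morphism $f:\mathbf V\to\mathbf W$, the functor $\Hom_{S\spaces}(\mathcal U,-)$ sends $f$ to post-composition, which under the identification $\Hom_{S\spaces}(k_C,\mathbf V)\cong V(C)$ of part (a) is just the restriction of the linear map $f:V\to W$ to each $V(C)$; after applying $\Phi_{\hat a(S)}$, this is precisely the linear map $f:V\to W$ restricted to each $V(A)$, which is exactly $\coind^{\hat a(S)}_S f$. The main obstacle is not conceptual but bookkeeping: one must keep straight the identification $\hat{\mathcal A}(S)=\hat a(S)^\omega$, the identification of each $\Hom_{S\spaces}(k_C,\mathbf V)$ with $V(C)$ from (a), and the convention $V(\emptyset)=V$, and then check that the explicitly described $k\hat a(S)^\omega$-module action matches the subspace-indexed assembly that $\Phi_{\hat a(S)}$ reads off. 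Once all identifications are made carefully, the isomorphism is literally the identity of data, and its functoriality in $\mathbf V$ is immediate.
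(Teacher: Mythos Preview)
Your proposal addresses only part (d), taking (a)--(c) as already established; with that understood, it is correct and follows exactly the same line as the paper's proof: identify $\hat{\mathcal A}(S)=\hat a(S)^\omega$ with $\omega=\emptyset$, compute $e_{\emptyset\emptyset}\Hom_{S\spaces}(\mathcal U,\mathbf V)\cong V$ and $e_{\emptyset B}\Hom_{S\spaces}(\mathcal U,\mathbf V)\cong V(B)=\bigcap_j V(b_j)$ via (c), and match these against the definition of $\coind^{\hat a(S)}_S$. You spell out two details the paper leaves implicit---the verification that $\bigcap_{s\ge A}V(s)=\bigcap_i V(a_i)$ via Proposition \ref{prel1}(c), and the naturality check---but the approach is the same.
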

\begin{proof} (a) Let  ${k}_A=\bigl(X,\,X(s)\bigr)_{s\in S}$.  Since a $k$-linear map $f:k\to V$ is uniquely determined by an arbitrary vector $f(1)\in V$, then $f:{k}_A\to\mathbf{V}$ is a morphism if and only if $f(1)\in V(s)$ whenever $X(s)=k,\ s\in S$.  By the definition of $k_A$,  we have $X(s)=k$ if and only if $a_i\le s$, for some $i$.  Therefore $f$ gives a morphism in $S\spaces$ if and only if $f(1)\in V(a_i),\ i=1,\dots,m$; if and only if $f(1)\in V(a_1)\cap\dots\cap V(a_m)$.

(b) Let $B=b_1\wedge\dots\wedge b_n.$ Putting ${k}_B=\bigl(V,\,V(s)\bigr)_{s\in S}$ and using (a), we see that $\Hom_{S\spaces}({k}_A, {k}_B)\ne0$ if and only if $\Hom_{S\spaces}({k}_A, {k}_B)= k$; if and only if $V(a_1)\cap\dots\cap V(a_m)=k$; if and only if $V(a_1)=\dots= V(a_m)=k$; if and only if for all $i$ there exists a $j$ such that $b_j\leq a_i$; if and only if $B\le A$ in $\hat{\mathcal A}(S)$. 

(c) Using (b) and Remark \ref{cat3.5}, we have 
$$\End_{{S\spaces}} \mathcal U=\Hom_{S\spaces}\bigl(\underset{A\in\hat{\mathcal A}(S)}\bigoplus k_A,\underset{B\in\hat{\mathcal A}(S)}\bigoplus k_B\bigr)\cong\left(\Hom_{S\spaces}({k}_A,{k}_B)\right)_{B,A\in\hat{\mathcal A}(S)}=k\hat{\mathcal A}(S)^{\mathrm{op}}.$$
Then $e_{AB}\circ(v_C)=(v_C)e_{BA}=\bigl(\delta_{CA}v_B\bigr),$ where juxtaposition indicates matrix multiplication.

(d)  In view of Proposition \ref{prel1}(c), Notation \ref{prel2}, and Definition \ref{cat6},  $\hat{\mathcal A}(S)=\hat{ a}(S)^\omega$ where $\omega=\emptyset.$ By the definition of $\Phi_{\hat{a}(S)}$,  we have $\Phi_{\hat{a}(S)}\bigl(\Hom_{S\spaces}(\mathcal U, \mathbf V)\bigr)=\bigl(X,X(B)\bigr)_{B\in\hat a(S)}$ where 
$$
X=e_{\emptyset\emptyset}\circ\bigl(\underset{C\in\hat{\mathcal A}(S)}\bigoplus V(C)\bigr)\cong V(\emptyset)=V,
$$ 
$$
X(B)=e_{\emptyset B}\circ\bigl(\underset{C\in\hat{\mathcal A}(S)}\bigoplus V(C)\bigr)\cong V(B)=V(b_1)\cap\dots\cap V(b_n),\mathrm{\ for\ all\ } B=b_1\wedge\dots\wedge b_n\mathrm{\ in\ }\hat{a}(S).
$$   Comparing these formulas with the definition of coinduction in Subsection \ref{adj}, we obtain an isomorphism $\Phi_{\hat{a}(S)}\Hom_{S\spaces}({\mathcal U},\mathbf V)\cong\coind^{\hat{a}(S)}_S\mathbf V$  functorial in $\mathbf V.$
\end{proof}

The following is a contravariant analog of the preceding statement. 

\begin{prop}\label{onedim3} Let $\mathbf {V}=\bigl(V, V(s)\bigr)_{s\in S}$ be  an $S$-space. 
\begin{itemize} 
\item[(a)] Let $B=b_1\vee\dots\vee b_n$ be in $\check{\mathcal A}(S)$ and set $V(B)=\sum^n_{j=1}V(b_j).$ A $k$-linear map $g\in\D V$ gives a morphism $g:\mathbf{V}\to{k}^B$ of $S$-spaces if and only if $g\in V(B)^{\perp},$  so there is   an isomorphism $\Hom_{S\spaces}\bigl(\mathbf{V},{k}^B\bigr)\cong V(B)^{\perp}$  of $k$-spaces functorial in $\mathbf{V}.$ We identify $\Hom_{S\spaces}(\mathbf{V},\mathcal U)$ with $\underset{C\in\check{\mathcal A}(S)}\bigoplus V(C)^\perp$ and write the elements of the latter as  column vectors $(g_C)_{C\in\check{\mathcal A}(S)}$  where $g_C\in V(C)^\perp.$ 

\item [(b)]   If $A,B\in \check{\mathcal A}(S)$ then $\Hom_{S\spaces}({k}^A,{k}^B)=\begin{cases} k &\mathrm{if}\ B\leq A,\\
0 &\mathrm{otherwise}.\end{cases}$

\item [(c)]  We identify $\End_{{S\spaces}} \mathcal U$ with  $k\check{\mathcal A}(S)^{\mathrm{op}}$ by identifying $\Hom_{S\spaces}({k}^A,{k}^B)$    with the subspace $ke_{BA}$  of the matrix algebra $M_{\check{\mathcal A}(S)}(k),$ for all $B\le A$ in $\check{\mathcal A}(S).$  Then $\Hom_{S\spaces}(\mathbf{V},\mathcal U)$ is a left $k\check{\mathcal A}(S)^{\mathrm{op}}$-module by means of $e_{BA}\bigl[(g_C)_{C\in\check{\mathcal A}(S)}\bigr]=\bigl(\delta_{CB}g_A\bigr)_{C\in\check{\mathcal A}(S)},$ where $B\le A$ and $\delta_{CB}$ is the Kronecker symbol.

\item[(d)]  
 $\Phi_{{\check{a}(S)}^{\mathrm{op}}} \Hom_{S\spaces}(-,{\mathcal U})\cong\D\ind^{\check{a}(S)}_S:S\spaces\to\check{a}(S)^{\mathrm{op}}\spaces$.
\end{itemize}
\end{prop}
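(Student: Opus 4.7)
The plan is to deduce all four parts from Proposition \ref{onedim2} applied to $S^{\mathrm{op}}$ via the duality $\D$ of Definition \ref{cat4}.  The key preliminary observation is that for each antichain $A$ of $S$, direct inspection of ambient spaces and subspaces gives $\D k^A_S\cong k^{S^{\mathrm{op}}}_A$ and $\D k_A^S\cong k^A_{S^{\mathrm{op}}}$, using the elementary identities $\langle A\rangle_S=(A)_{S^{\mathrm{op}}}$ and $(A)_S=\langle A\rangle_{S^{\mathrm{op}}}$ of filters and ideals.  Summing over $A$ yields $\D\mathcal U_S\cong \mathcal U_{S^{\mathrm{op}}}$, so the duality provides a natural isomorphism
$$\Hom_{S\spaces}(\mathbf V,\mathcal U)\cong\Hom_{S^{\mathrm{op}}\spaces}(\mathcal U_{S^{\mathrm{op}}},\D\mathbf V).$$

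Applying the duality to the pair $(\mathbf V,k^B)$ and Proposition \ref{onedim2}(a) to $S^{\mathrm{op}}$ gives
$$\Hom_{S\spaces}(\mathbf V,k^B)\cong\Hom_{S^{\mathrm{op}}\spaces}(k_B,\D\mathbf V)\cong(\D\mathbf V)(B)=\bigcap_{j=1}^n V(b_j)^\perp=\bigl(V(b_1)+\dots+V(b_n)\bigr)^\perp=V(B)^\perp,$$
which is (a); and Proposition \ref{onedim2}(b) applied to $S^{\mathrm{op}}$ yields $\Hom_{S\spaces}(k^A,k^B)=k$ exactly when $A\le B$ in $\hat{\mathcal A}(S^{\mathrm{op}})$.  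A direct comparison of the orderings in Propositions \ref{prel1}(c) and \ref{prel3}(c) shows this condition is equivalent to $B\le A$ in $\check{\mathcal A}(S)$, proving (b).

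Part (c) is then a routine translation of (b).  By (b) the nonzero matrix blocks of $\End_{S\spaces}\mathcal U\subseteq M_{\check{\mathcal A}(S)}(k)$ are the one-dimensional subspaces $ke_{BA}$ with $B\le A$ in $\check{\mathcal A}(S)$, which by Remark \ref{cat3.5} form a basis of $k\check{\mathcal A}(S)^{\mathrm{op}}$; composition of morphisms is ordinary matrix multiplication.  Representing a morphism $\mathbf V\to\mathcal U$ via (a) as the column vector $(g_C)_{C\in\check{\mathcal A}(S)}$, the left action of $e_{BA}$ is the corresponding matrix-vector product $(\delta_{CB}g_A)_C$.

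For (d), combine Proposition \ref{prel7} (which gives $\D\ind^{\check a(S)}_S\cong\coind^{\check a(S)^{\mathrm{op}}}_{S^{\mathrm{op}}}\D$) with Proposition \ref{onedim2}(d) applied to $S^{\mathrm{op}}$, invoking the identification $\hat a(S^{\mathrm{op}})=\check a(S)^{\mathrm{op}}$ so that the target categories agree and $\Phi_{\check a(S)^{\mathrm{op}}}$ is applicable to the module from (c) (noting also that $\check{\mathcal A}(S)^{\mathrm{op}}=(\check a(S)^{\mathrm{op}})^\omega$ with the empty antichain serving as $\omega$).  This produces
$$\coind^{\check a(S)^{\mathrm{op}}}_{S^{\mathrm{op}}}\D\mathbf V\cong\Phi_{\check a(S)^{\mathrm{op}}}\Hom_{S^{\mathrm{op}}\spaces}(\mathcal U_{S^{\mathrm{op}}},\D\mathbf V),$$
which via the natural isomorphism of the first paragraph equals $\Phi_{\check a(S)^{\mathrm{op}}}\Hom_{S\spaces}(\mathbf V,\mathcal U)$, as required.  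The main difficulty throughout is bookkeeping: one must systematically track how the duality exchanges $\hat{\mathcal A}$ with $\check{\mathcal A}$, filters with ideals, and induction with coinduction, and verify that all the identifications in (a)--(d) are coherent on both objects and morphisms.
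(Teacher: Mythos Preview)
Your proof is correct but takes a genuinely different route from the paper. The paper proves parts (a) and (b) by a short direct computation: for (a) one simply observes that $g\in\D V$ is a morphism $\mathbf V\to k^B$ iff $g(V(s))=0$ for every $s$ in the ideal $(B)$, which reduces to $g\in\bigcap_j V(b_j)^\perp=V(B)^\perp$; part (b) then follows by specializing (a) to $\mathbf V=k^A$. Only for (c) and (d) does the paper invoke duality, and even there it says merely that ``the argument is dual to the proof of parts (c) and (d) of Proposition \ref{onedim2}'', i.e., one repeats the computation with the obvious order reversals rather than literally transporting via the functor $\D$.

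Your approach, by contrast, derives all four parts uniformly from Proposition \ref{onedim2} for $S^{\mathrm{op}}$ via the duality $\D$, using $\D k^A\cong k_A$ as $S^{\mathrm{op}}$-spaces and the poset identification $\hat a(S^{\mathrm{op}})=\check a(S)^{\mathrm{op}}$. This is more conceptual and makes explicit that Proposition \ref{onedim3} is a formal consequence of Proposition \ref{onedim2}, but it costs extra bookkeeping. One point worth tightening: in part (a) the statement asserts the concrete ``if and only if'' for a given $g\in\D V$, not merely an abstract isomorphism $\Hom_{S\spaces}(\mathbf V,k^B)\cong V(B)^\perp$. Your chain of isomorphisms does yield this, but you should note that under the canonical identification $\D k\cong k$ one has $(\D g)(1)=g$, so the composite isomorphism is literally the inclusion $g\mapsto g$. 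With that remark in place, your argument is complete.
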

\begin{proof} (a) The ambient space of $k^B$ is $k,$ and the subspace of $k$ associated to each $s\in S\setminus (B)$ is $k,$  where  $(B)$ is the ideal of $S$ generated by the antichain $B.$  Therefore, a map $g\in\D V$ is a morphism $\mathbf{V}\to{k}^B$  if and only  if $g\bigl(V(s)\bigr)=0$ for all $s\in (B)$; if and only if $g\bigl(V(b_j)\bigr)=0$ for $j=1,\dots,n$; if and only if $g\in\bigcap^n_ {j=1}\bigl(V(b_j)^{\perp}\bigr)=\bigl(\sum^n_{j=1}V(b_j)\bigr)^{\perp}$.

(b)  Let $A=a_1\vee\dots\vee a_m.$  Since any morphism $k^A\to k^B$ is  of the form $\lambda\in k$, applying (a) to ${\mathbf V}=k^A$ yields that  $\Hom_{S\spaces}({k}^A,{k}^B)\ne0$ if and only if $\lambda\in\bigl(V(B)\bigr)^{\perp}$ for some, hence for all, $\lambda\ne0$; if and only if $V(b_j)=0$ for $j=1,\dots,n$; if and only if for all $j$ there exists an $i$ such that $b_j\le a_i$; if and only if $B\le A$. 

(c) and (d) The argument is dual to the proof of parts (c) and (d) of Proposition \ref{onedim2}.

\end{proof}

\section{Projectivization}\label{pr}
We use projectivization (see \cite[Section I.2]{ARS}) to obtain equivalences of categories needed for the construction of differentiation algorithms of Section \ref{dif}.  Recall that if $U$ is an object of an additive category ${\mathfrak A}$, then $\add U$ is the full subcategory of ${\mathfrak A}$ determined by the direct summands of finite direct sums of copies of $U.$  For $X,Y\in\mathfrak A$ we denote by $\mathfrak{A}(X,Y)$ the set of morphisms from $X$ to $Y$ in $\mathfrak A.$

 The following proposition is an analog of \cite[Prop.~II.2.1]{ARS}, and the same proof works.

\begin{prop}\label{pr1}  Let $\mathfrak A$ be an additive category, let $U\in\mathfrak A$, and set $\Gamma={\mathfrak A}(U,U)$.  
\begin{itemize}
\item[(a)] The representable functor $e_U={\mathfrak A}(U, - ):{\mathfrak A}\to\Gamma^{\mathrm{op}}\Modules$ has the following properties.
\begin{itemize}
\item[(i)] $e_U:\mathfrak{A}(Z,X)\to\Hom_{\Gamma^{\mathrm{op}}}(e_U(Z),e_U(X))$ is an isomorphism for $Z\in\add U$ and $X\in\mathfrak A$.
\item[(ii)] If $X\in\add U$ then $e_U(X)\in\Gamma^{\mathrm{op}}\proj.$
\item[(iii)] $e_U|{\add U}: \add U\to\Gamma^{\mathrm{op}}\proj$ is an equivalence of categories.
\end{itemize}
\item[(b)] The contravariant representable functor $e^U={\mathfrak A}(-,U):{\mathfrak A}\to\Gamma\Modules$ has the following properties.
\begin{itemize}
\item[(i)] $e^U:\mathfrak{A}(X,Z)\to\Hom_{\Gamma}(e^U(Z),e^U(X))$ is an isomorphism for $Z\in\add U$ and $X\in\mathfrak A$.
\item[(ii)] If $X\in\add U$ then $e^U(X)\in\Gamma\proj$.
\item[(iii)] $e^U|{\add U}: \add U\to\Gamma\proj$ is a duality. 
\end{itemize}
\end{itemize}
\end{prop}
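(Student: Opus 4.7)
\emph{Proof proposal.} The plan is to follow the staircase $Z=U$, then $Z=U^n$, then $Z\in\add U$ to establish part (a)(i), deduce (a)(ii) and (a)(iii) from it, and finally obtain part (b) by the dual argument.

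For (a)(i), the base case $Z=U$ is the Yoneda calculation: $e_U(U)=\Gamma$ is free of rank one as a left $\Gamma^{\mathrm{op}}$-module, so evaluation at $1_U$ identifies $\Hom_{\Gamma^{\mathrm{op}}}(\Gamma,e_U(X))$ with $e_U(X)=\mathfrak{A}(U,X)$, and the composite with $e_U$ is the identity. Since $e_U$ preserves finite biproducts, the case $Z=U^n$ follows by splitting both sides into $n$ copies. For arbitrary $Z\in\add U$, pick $i:Z\to U^n$ and $p:U^n\to Z$ with $pi=1_Z$; then $e_U(Z)$ is the direct summand of $\Gamma^n$ cut out by $e_U(ip)$, and both $\mathfrak{A}(Z,X)$ and $\Hom_{\Gamma^{\mathrm{op}}}(e_U(Z),e_U(X))$ appear as images of the matching idempotents inside $\mathfrak{A}(U^n,X)$ and $\Hom_{\Gamma^{\mathrm{op}}}(\Gamma^n,e_U(X))$; naturality of $e_U$ transports the isomorphism from $Z=U^n$ to the given $Z$.

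Part (a)(ii) is then immediate: exhibiting $X$ as a summand of $U^n$ shows $e_U(X)$ as a summand of $\Gamma^n$, hence finitely generated projective. For (a)(iii), (i) and (ii) together say that $e_U|{\add U}$ is fully faithful with image inside $\Gamma^{\mathrm{op}}\proj$, so only essential surjectivity is at issue. Given $P\in\Gamma^{\mathrm{op}}\proj$, write $P$ as the image of an idempotent $\varepsilon\in\End_{\Gamma^{\mathrm{op}}}(\Gamma^n)$; by (i) applied to $Z=X=U^n$ there is a unique $\tilde\varepsilon\in\End_{\mathfrak A}(U^n)$ with $e_U(\tilde\varepsilon)=\varepsilon$, and faithfulness forces $\tilde\varepsilon$ to be idempotent. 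Since $\add U$ is by definition closed under direct summands, $\tilde\varepsilon$ has an image $Y\in\add U$, and $e_U(Y)\cong P$.

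Part (b) proceeds by dualization: $e^U=\mathfrak{A}(-,U)$ carries biproducts to biproducts contravariantly and turns $\mathfrak{A}(X,U)$ into a left $\Gamma$-module via pre-composition, with $\mathfrak{A}(U,U)=\Gamma$ itself the free rank-one left $\Gamma$-module; the same three-step argument with arrows reversed yields a fully faithful, dense contravariant functor $\add U\to\Gamma\proj$, i.e., a duality. The main non-mechanical point in either part is the density step in (iii), which hinges on lifting an idempotent through the isomorphism of (i) and then splitting it inside $\add U$; after that, the rest is bookkeeping.
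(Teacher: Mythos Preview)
The paper does not give its own proof: it simply says the statement is an analog of \cite[Prop.~II.2.1]{ARS} and that ``the same proof works.'' Your argument is exactly the standard one (Yoneda for $Z=U$, additivity for $Z=U^n$, idempotent splitting for general $Z\in\add U$, then density via lifting idempotents), so in spirit you match the intended reference.

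There is one genuine soft spot, though, and it sits precisely at the point you flag as ``the main non-mechanical point.'' In the density step of (a)(iii) you write: ``Since $\add U$ is by definition closed under direct summands, $\tilde\varepsilon$ has an image $Y\in\add U$.'' Closure of $\add U$ under direct summands only says that \emph{if} an object of $\mathfrak A$ happens to be a summand of some $U^n$, then it lies in $\add U$; it does \emph{not} guarantee that an arbitrary idempotent $\tilde\varepsilon\in\End_{\mathfrak A}(U^n)$ splits in $\mathfrak A$ at all. For a bare additive category this can fail, and with it (iii): take $\mathfrak A$ to be the category of even-dimensional $k$-vector spaces and $U=k^2$, so $\Gamma\cong M_2(k)$; the simple projective $\Gamma^{\mathrm{op}}$-module is not in the essential image of $e_U$. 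What is really needed is that idempotents split in $\mathfrak A$ (equivalently, on each $U^n$). This hypothesis is present in the ARS setting ($\mathfrak A=\Lambda\modules$) and in the paper's application ($\mathfrak A=S\spaces$, which is Krull-Schmidt), so nothing downstream is harmed; but your sentence as written is a non sequitur, and you should state the idempotent-splitting hypothesis explicitly before invoking it.
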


We  apply Proposition \ref{pr1} when $\mathfrak A=S\spaces$ and $U=\mathcal U=\underset{A\in\hat{\mathcal A}(S)}\bigoplus k_A=\underset{A\in\check{\mathcal A}(S)}\bigoplus k^A.$ 

\begin{prop}\label{pr2}   Let $S$ be a  poset.
\begin{itemize}
\item[(a)]  The functor $\coind^{\hat{a}(S)}_S|S\sem:S\sem\to  \hat{a}(S)\proj$ is an equivalence of categories.

\item[(b)]  An $\hat{a}(S)$-space $\mathbf W$ is projective if and only if $\res^{\hat{a}(S)}_S\mathbf W\in S\sem$ and $\mathbf W=\coind^{\hat{a}(S)}_S\res^{\hat{a}(S)}_S\mathbf W$.

\item[(c)]   The functor $\ind^{\check{a}(S)}_S|S\sem :S\sem\to \check{a}(S)\inj$ is an equivalence of categories.

\item[(d)]  An $\check{a}(S)$-space $\mathbf W$ is injective if and only if $\res^{\check{a}(S)}_S\mathbf W\in S\sem$ and $\mathbf W=\ind^{\check{a}(S)}_S\res^{\check{a}(S)}_S\mathbf W$.

\item [(e)]  If $w(S)\le2$, then $\coind^{\hat{a}(S)}_S:S\spaces\to \hat{a}(S)\proj$ and $\ind^{\check{a}(S)}_S:S\spaces\to\check{a}(S)\inj $ are equivalences of categories.

\end{itemize}
\end{prop}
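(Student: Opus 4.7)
The overall strategy is to apply the projectivization machinery of Proposition \ref{pr1} with $\mathfrak{A} = S\spaces$ and $U = \mathcal U$, then transport the resulting equivalences to categories of poset spaces using the functors $\Phi$ of Definition \ref{cat6} and the duality $\D$ of Definition \ref{cat4}.

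For part (a), first note that $\add \mathcal U = S\sem$ by Proposition \ref{onedim1} and the definition of semisimple $S$-spaces, and by Proposition \ref{onedim2}(c), $\End_{S\spaces}\mathcal U \cong k\hat{\mathcal A}(S)^{\mathrm{op}}$. Proposition \ref{pr1}(a)(iii) then supplies an equivalence $\Hom_{S\spaces}(\mathcal U,-)\colon S\sem \to k\hat{\mathcal A}(S)\proj$. Since $\emptyset$ is the unique maximal element of $\hat{\mathcal A}(S)$ by Proposition \ref{prel1}(c)(ii), we have $\hat{\mathcal A}(S) = \hat a(S)^\omega$ in the notation of Definition \ref{cat6}, and Theorem \ref{projectives}(d) gives an equivalence $\Phi_{\hat a(S)}\colon k\hat a(S)^\omega\proj \to \hat a(S)\proj$. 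Composing and invoking Proposition \ref{onedim2}(d), which identifies $\Phi_{\hat a(S)} \circ \Hom_{S\spaces}(\mathcal U,-)$ with $\coind_S^{\hat a(S)}$, yields (a). Part (c) proceeds analogously but starts from the duality in Proposition \ref{pr1}(b)(iii), uses Proposition \ref{onedim3}(c) to identify $\End_{S\spaces}\mathcal U \cong k\check{\mathcal A}(S)^{\mathrm{op}}$, applies Proposition \ref{onedim3}(d) to recognize $\Phi_{\check a(S)^{\mathrm{op}}} \circ \Hom_{S\spaces}(-,\mathcal U)$ as $\D \ind_S^{\check a(S)}$, and finally composes with the duality $\D\colon \check a(S)^{\mathrm{op}}\proj \to \check a(S)\inj$; the cancellation $\D^2 = 1$ exhibits $\ind_S^{\check a(S)}|S\sem$ as a composition of two dualities, hence an equivalence.

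For parts (b) and (d), in one direction, if $\mathbf W$ is projective (respectively injective), then by (a) (respectively (c)) there exists $\mathbf V \in S\sem$ with $\mathbf W \cong \coind_S^{\hat a(S)}\mathbf V$ (respectively $\mathbf W \cong \ind_S^{\check a(S)}\mathbf V$); applying $\res$ and invoking $\res \coind = \res \ind = 1$ from Proposition \ref{prel6}(b) yields $\res \mathbf W \in S\sem$. To upgrade this isomorphism to the equality $\mathbf W = \coind \res \mathbf W$, decompose $\mathbf W$ via Theorem \ref{projectives}(b) into indecomposable projectives $\mathbf P_t$ and check the pointwise identity $P_t(A) = \bigcap_{s \in S,\, s \ge A} P_t(s)$ by a short case analysis on whether $A \ge t$ in $\hat a(S)$: if yes, both sides equal $k$; if no, one exhibits an element $s \in S$ (specifically some $a_i$ drawn from the antichain $A$) satisfying $s \ge A$ but $s \not\ge t$, forcing both sides to vanish. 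The converse direction is immediate, since the two conditions place $\mathbf W$ in the essential image of $\coind_S^{\hat a(S)}$ (respectively $\ind_S^{\check a(S)}$). Part (e) follows at once from (a) and (c), since Proposition \ref{width2} yields $S\spaces = S\sem$ whenever $w(S) \le 2$.

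The main obstacle is the bridge between the abstract projectivization equivalence and the concrete (co)induction functors, supplied by Propositions \ref{onedim2}(d) and \ref{onedim3}(d); the secondary hurdle is the combinatorial verification of $\mathbf W = \coind \res \mathbf W$ on the indecomposable projectives $\mathbf P_t$, which demands a careful handling of the partial orders on $\hat a(S)$ and $\check a(S)$.
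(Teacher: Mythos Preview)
Your proof is correct and, for parts (a), (c), and (e), follows the paper's argument essentially verbatim: projectivization via Proposition~\ref{pr1}, identification of $\hat{\mathcal A}(S)$ with $\hat a(S)^\omega$, the equivalence $\Phi$ from Theorem~\ref{projectives}(d), and the bridge supplied by Propositions~\ref{onedim2}(d) and~\ref{onedim3}(d).

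For part (b) your route diverges slightly from the paper's. The paper argues directly from the isomorphism $f\colon\coind_S^{\hat a(S)}\mathbf U\to\mathbf W$: since $f$ is a linear bijection it preserves finite intersections, so $W(A)=f\bigl(\bigcap_i U(a_i)\bigr)=\bigcap_i f(U(a_i))=\bigcap_i W(a_i)$, which is exactly the equality $\mathbf W=\coind\res\mathbf W$. You instead decompose $\mathbf W$ into indecomposable projectives $\mathbf P_t$ and verify the identity $P_t(A)=\bigcap_{s\in S,\,s\ge A}P_t(s)$ combinatorially. This is also correct, but note that Theorem~\ref{projectives}(b) gives the decomposition only up to isomorphism, so you implicitly use that the condition $\mathbf X=\coind\res\mathbf X$ is invariant under isomorphism of $\hat a(S)$-spaces (which follows, again, because linear isomorphisms preserve intersections---the same fact the paper uses more directly). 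The paper's argument is a touch shorter; yours has the virtue of making the combinatorics of $\hat a(S)$ explicit.
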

\begin{proof} (a)  Since $S\sem=\add \mathcal U$, Proposition \ref{onedim2}(c) and part (iii) of Proposition \ref{pr1}(a) say that $\Hom_{S\spaces}({\mathcal U},-)|S\sem :S\sem\to  k\hat{\mathcal A}(S)\proj$ is an equivalence of categories.  In view of part (ii) of Proposition \ref{prel1}(c), Notation \ref{prel2},  and Definition \ref{cat6}, $\hat{\mathcal A}(S)=\hat{a}(S)^\omega$ where $\omega=\emptyset.$  Therefore, Theorem \ref{projectives}(d) says that  $\Phi_{\hat a(S)}|k\hat{\mathcal A}(S)\proj:k\hat{\mathcal A}(S)\proj\to\hat{a}(S)\proj $ is an equivalence of categories. Since $\coind^{\hat{a}(S)}_S\cong\Phi_{\hat{a}(S)}\Hom_{S\spaces}({\mathcal U},-)$ by Proposition \ref{onedim2}(d), the statement follows.

(b) The sufficiency follows directly from (a).  For the necessity, suppose $\mathbf W=(W,W(A))_{
A\in\hat{a}(S)}$ is in $\hat{a}(S)\proj$.  By (a), there is an isomorphism $f:\coind^{\hat{a}(S)}_S\mathbf U\to\mathbf W$ for some $\mathbf U\in S\sem$.  Applying $\res^{\hat{a}(S)}_S$ and using Proposition \ref{prel6}(b), we obtain an isomorphism $\res^{\hat{a}(S)}_S f:\mathbf U\to\res^{\hat{a}(S)}_S\mathbf W$ whence $\res^{\hat{a}(S)}_S\mathbf W\in S\sem$.  By construction, $\coind^{\hat{a}(S)}_S\mathbf U=(X,X(A))_{A\in\hat{a}(S)}$ where $X=U$ and $X(A)=U(a_1)\cap\dots\cap U(a_m)$ for all $A=a_1\wedge\dots\wedge a_m,\ m>0$.  Since $f$ is an isomorphism in $\hat{a}(S)\spaces$, the isomorphism $f:U\to W$ of $k$-spaces satisfies $f(X(A))=f(U(a_1)\cap\dots\cap U(a_m))=W(a_1)\cap\dots\cap W(a_m)=W(A)$.  Hence $\mathbf W=\coind^{\hat{a}(S)}_S\res^{\hat{a}(S)}_S\mathbf W$.

(c)  Proposition \ref{onedim3}(c) and part (iii) of Proposition \ref{pr1}(b) say that the contravariant functor $\Hom_{S\spaces}(-,{\mathcal U})|S\sem :S\sem\to  k\check{\mathcal A}(S)^{\mathrm{op}}\proj$ is a duality.  In view of part (ii) of Proposition \ref{prel3}(c), Notation \ref{prel4},  and Definition \ref{cat9}, $\check{\mathcal A}(S)=\check{a}(S)_0$ where $0=\emptyset.$  Therefore, $\check{\mathcal A}(S)^{\mathrm{op}}=\bigl({\check{a}(S)^{\mathrm{op}}}\bigr)^\omega$ where $\omega=\emptyset,$ so that Theorem \ref{projectives}(d) says that $\Phi_{\check a(S)^{\mathrm{op}}}|k\check{\mathcal A}(S)^{\mathrm{op}}\proj:k\check{\mathcal A}(S)^{\mathrm{op}}\proj\to\check{a}(S)^{\mathrm{op}}\proj $ is an equivalence of categories.  Using  the duality $\D:\check{a}(S)^{\mathrm{op}}\proj\to\check{a}(S)\inj,$ we obtain that 
$$\D\circ\Phi_{\check a(S)^{\mathrm{op}}}\circ\Hom_{S\spaces}(-,{\mathcal U})|S\sem :S\sem\to \check{a}(S)\inj$$ is an equivalence of categories.  By Proposition \ref{onedim3}(d), 
$$\ind^{\check a(S)}_S\cong\D\circ\D\circ\ind^{\check a(S)}_S\cong\D\circ\Phi_{\check a(S)^{\mathrm{op}}}\circ\Hom_{S\spaces}(-,{\mathcal U}).$$
Hence $\ind^{\check a(S)}_S|S\sem:S\sem\to\check{a}(S)\inj$ is an equivalence of categories.

(d) The argument is dual to the proof of (b).

(e)  This is an immediate consequence of (a), (c), and Proposition \ref{width2}.
\end{proof}

For a subset $R$ of a poset $S$, we denote by $(S\spaces,R\proj)$, $(S\spaces,R\inj)$, or $(S\spaces,R\sem)$ the full subcategory of $S\spaces$ determined  by the $S$-spaces $\mathbf X$ for which $\res^S_R\mathbf X$ is projective, injective, or semisimple, respectively, in $R\spaces$.  

\begin{prop}\label{pr3}  Let $T$ be a subset of a poset $S$.  For $S_{S\setminus T}=(S\setminus T)\cup\hat{a}(T)$ and $S^{S\setminus T}=(S\setminus T)\cup\check{a}(T),$ we have:
 \begin{itemize}
\item [(a)]  The functor $\coind^{S_{S\setminus T}}_S|(S\spaces,T\sem):(S\spaces,T\sem)\to  (S_{S\setminus T}\spaces,\hat{a}(T)\proj)$  is an equivalence of categories.
 \item [(b)]  The functor $\ind^{S^{S\setminus T}}_S|(S\spaces,T\sem):(S\spaces,T\sem)\to  (S^{S\setminus T}\spaces,\check{a}(T)\inj)$  is an equivalence of categories.
 \item[(c)]   If $w(T)\le2$, then  the functors $\coind^{S_{S\setminus T}}_S:S\spaces\to  (S_{S\setminus T}\spaces,\hat{a}(T)\proj)$ and  \newline$\ind^{S^{S\setminus T}}_S:S\spaces\to  (S^{S\setminus T}\spaces,\check{a}(T)\inj)$  are equivalences of categories.
\end{itemize}
\end{prop}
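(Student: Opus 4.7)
The plan is to prove (a) directly by establishing that $\coind^{S_{S\setminus T}}_S$, restricted to $(S\spaces,T\sem)$, is (i) well-defined into the stated target, (ii) fully faithful, and (iii) dense. Then (b) will follow by the standard duality/opposite-poset argument that has been used repeatedly in Section~\ref{pr}, and (c) is immediate from (a), (b), and Proposition~\ref{width2}, since $w(T)\le2$ makes $(S\spaces,T\sem)=S\spaces$.

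For well-definedness of (a), let $\mathbf X\in(S\spaces,T\sem)$. Using $S\subseteq S_{S\setminus T}$ (via $T\subseteq\hat a(T)$), Proposition~\ref{prel6.5}(a) gives
\[
\res^{S_{S\setminus T}}_{\hat a(T)}\coind^{S_{S\setminus T}}_S\mathbf X
=\coind^{\hat a(T)}_T\res^S_T\mathbf X,
\]
and since $\res^S_T\mathbf X\in T\sem$, Proposition~\ref{pr2}(a) puts the right-hand side in $\hat a(T)\proj$. So $\coind^{S_{S\setminus T}}_S\mathbf X\in(S_{S\setminus T}\spaces,\hat a(T)\proj)$. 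Full faithfulness is even easier: Proposition~\ref{prel6}(b) says $\coind^{S_{S\setminus T}}_S$ is fully faithful on the whole of $S\spaces$, hence on any full subcategory.

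The one substantive step is density, and this is where I expect the only real work. Given $\mathbf Y\in(S_{S\setminus T}\spaces,\hat a(T)\proj)$, the natural candidate preimage is $\mathbf X:=\res^{S_{S\setminus T}}_S\mathbf Y$. First, by Proposition~\ref{prel6}(c),
\[
\res^S_T\mathbf X=\res^{S_{S\setminus T}}_T\mathbf Y=\res^{\hat a(T)}_T\res^{S_{S\setminus T}}_{\hat a(T)}\mathbf Y,
\]
and since $\res^{S_{S\setminus T}}_{\hat a(T)}\mathbf Y\in\hat a(T)\proj$, Proposition~\ref{pr2}(b) gives $\res^S_T\mathbf X\in T\sem$, so $\mathbf X\in(S\spaces,T\sem)$. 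To check $\coind^{S_{S\setminus T}}_S\mathbf X=\mathbf Y$, I set $\mathbf Z:=\coind^{S_{S\setminus T}}_S\res^{S_{S\setminus T}}_S\mathbf Y$ and verify that $\mathbf Z$ and $\mathbf Y$ agree on the two subposets $S$ and $\hat a(T)$ whose union is $S_{S\setminus T}$. Proposition~\ref{prel6}(b) gives $\res^{S_{S\setminus T}}_S\mathbf Z=\res^{S_{S\setminus T}}_S\mathbf Y$. For the other restriction, Proposition~\ref{prel6.5}(a) followed by Proposition~\ref{prel6}(c) yields
\[
\res^{S_{S\setminus T}}_{\hat a(T)}\mathbf Z
=\coind^{\hat a(T)}_T\res^S_T\res^{S_{S\setminus T}}_S\mathbf Y
=\coind^{\hat a(T)}_T\res^{\hat a(T)}_T\res^{S_{S\setminus T}}_{\hat a(T)}\mathbf Y,
\]
and because $\res^{S_{S\setminus T}}_{\hat a(T)}\mathbf Y$ lies in $\hat a(T)\proj$, Proposition~\ref{pr2}(b) identifies this with $\res^{S_{S\setminus T}}_{\hat a(T)}\mathbf Y$. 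Proposition~\ref{prel6}(d), applied to the covering $S_{S\setminus T}=S\cup\hat a(T)$, now yields $\mathbf Z=\mathbf Y$ on the nose.

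Part (b) is obtained by applying the duality $\D:S\spaces\to S\op\spaces$ of Definition~\ref{cat4}: Proposition~\ref{prel7} converts $\coind$ to $\ind$ and swaps $\hat a$ with $\check a$, while $\D$ interchanges projectives and injectives and preserves $\sem$ (since simples are self-dual up to the $\op$ flip by Proposition~\ref{onedim1}), so the equivalence of (a) for $S\op$ translates verbatim to the equivalence of (b) for $S$. Finally, part (c) drops out of Proposition~\ref{width2}: if $w(T)\le 2$, every $T$-space is semisimple, so $(S\spaces,T\sem)=S\spaces$, and (a), (b) specialize to the stated equivalences. The only delicate point throughout is the density argument in (a); everything else is a bookkeeping exercise in the identities already recorded in Propositions~\ref{prel6}, \ref{prel6.5}, and \ref{pr2}.
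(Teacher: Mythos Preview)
Your proof is correct and follows essentially the same route as the paper's. The paper frames (a) as exhibiting $\res^{S_{S\setminus T}}_S|\mathfrak H$ as a strict two-sided inverse to $\coind^{S_{S\setminus T}}_S|\mathfrak G$, whereas you phrase it as ``fully faithful plus dense,'' but the substantive computation---verifying $\coind^{S_{S\setminus T}}_S\res^{S_{S\setminus T}}_S\mathbf Y=\mathbf Y$ by checking the restrictions to $S$ and to $\hat a(T)$ separately via Propositions~\ref{prel6}, \ref{prel6.5}, and \ref{pr2}, and then invoking Proposition~\ref{prel6}(d)---is identical in both.
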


\begin{proof} (a) Put $\mathfrak G=(S\spaces,T\sem)$ and $\mathfrak H=(S_{S\setminus T}\spaces,\hat{a}(T)\proj).$ To check that the image of  $\coind^{S_{S\setminus T}}_S|\mathfrak G$ is contained in $\mathfrak H,$ suppose that $\mathbf V\in S\spaces$ satisfies $\res^S_T\mathbf V\in T\sem$  and set $\mathbf W=\coind^{S_{S\setminus T}}_S\mathbf V.$  We have to prove that $\res^{S_{S\setminus T}}_{\hat a(T)}\mathbf W\in \hat a(T)\proj.$ 

Using parts (b)  and (c) of Proposition \ref{prel6}, we have
$$ \res^{\hat a(T)}_T\bigl(\res^{S_{S\setminus T}}_{\hat a(T)}\mathbf W\bigr)=\res^{S_{S\setminus T}}_T\coind^{S_{S\setminus T}}_S\mathbf V=\res^S_T\mathbf V\in T\sem
$$ 
and, in view of Proposition \ref{prel6.5}(a),
$$\coind^{\hat a(T)}_T\res^{\hat a(T)}_T\bigl(\res^{S_{S\setminus T}}_{\hat a(T)}\mathbf W\bigr)=\coind^{\hat a(T)}_T\res^S_T\mathbf V=\res^{S_{S\setminus T}}_{\hat a(T)}\coind^{S_{S\setminus T}}_S\mathbf V=\res^{S_{S\setminus T}}_{\hat a(T)}\mathbf W.
$$
By Proposition \ref{pr2}(b), $ \res^{S_{S\setminus T}}_{\hat a(T)}\mathbf W$ is projective.

We also note that the image of the functor $\res^{S_{S\setminus T}}_S|\mathfrak H$ is contained in $\mathfrak G.$ Indeed, if $\mathbf W\in S_{S\setminus T}\spaces$ has the property that $\res^{S_{S\setminus T}}_{\hat a(T)}\mathbf W$ is projective, then Propositions \ref{pr2}(b) and  \ref{prel6}(c) say that 
$$\res^{\hat a(T)}_T\bigl(\res^{S_{S\setminus T}}_{\hat a(T)}\mathbf W\bigr)=\res^{S}_T\bigl(\res^{S_{S\setminus T}}_{S}\mathbf W\bigr)\in T\sem.
$$

By Proposition \ref{prel6}(b), $\bigl(\res^{S_{S\setminus T}}_S|\mathfrak H\bigr)\circ\bigl(\coind^{S_{S\setminus T}}_S|\mathfrak G\bigr)=1_{\mathfrak G},$  and we claim that 
\begin{equation}\label{pr4}
\bigl(\coind^{S_{S\setminus T}}_S|\mathfrak G\bigr)\circ\bigl(\res^{S_{S\setminus T}}_S|\mathfrak H\bigr)=1_{\mathfrak H}.
\end{equation} 

To show that equality (\ref{pr4}) holds on objects, we have to check that
\begin{equation}\label{pr5}
\coind^{S_{S\setminus T}}_S\mathbf  \res^{S_{S\setminus T}}_S\mathbf W=\mathbf W,
\end{equation} 
provided $\mathbf W\in S_{S\setminus T}\spaces$ satisfies $\res^{S_{S\setminus T}}_{\hat{a}(T)}\mathbf W\in\hat{a}(T)\proj.$   Proposition \ref{prel6}(b) says that 
$$ \res^{S_{S\setminus T}}_S\bigl( \coind^{S_{S\setminus T}}_S \res^{S_{S\setminus T}}_S\mathbf W\bigr)= \res^{S_{S\setminus T}}_S\mathbf W.
$$
Using Propositions \ref{prel6.5}(a),  \ref{prel6}(c), and \ref{pr2}(b), we also have
$$ \res^{S_{S\setminus T}}_{\hat a(T)}\bigl( \coind^{S_{S\setminus T}}_S  \res^{S_{S\setminus T}}_S\mathbf W\bigr)=\coind^{\hat a(T)}_T  \res^{S}_{T}  \res^{S_{S\setminus T}}_S\mathbf W=$$
$$\coind^{\hat a(T)}_T  \res^{\hat a(T)}_{T}  \bigl(\res^{S_{S\setminus T}}_{\hat a(T)}\mathbf W\bigr)=\res^{S_{S\setminus T}}_{\hat a(T)}\mathbf W
$$
because $\res^{S_{S\setminus T}}_{\hat{a}(T)}\mathbf W$ is projective.  Since  $S_{S\setminus T}=S\cup\hat{a}(T),$  Proposition \ref{prel6}(d) says that equality (\ref{pr5}) holds.

It is an immediate consequence of the definitions of restriction and coinduction that  equality (\ref{pr4}) also holds on morphisms.

(b)  The argument is dual to the proof of (a).

(c)  In view of Proposition \ref{width2}, this is an immediate consequence of (a) and (b).  
\end{proof}

\section{Differentiation algorithms}\label{dif}

\begin{defn}\label{prelimdif2}  Let $S$ be a poset, let $p\in S,$ and let $\mathbf{V}=(V,V(s))_{s\in S}$ be an $S$-space. We set $E^p\mathbf{V}=(X,X(s))_{s\in S}$ where $X=V(p)$ and $X(s)=V(s)\cap V(p),\ s\in S.$ Clearly, $E^p\mathbf{V}$ is an $S$-space full at $p,$ and the inclusion $\kappa_p(\mathbf V):V(p)\to V$ gives a proper monomorphism $\kappa_p(\mathbf V):E^p\mathbf{V}\to\mathbf{V}.$  By Proposition \ref{cat3}(a), $E^p\mathbf{V}$ is the only  $S$-space with the ambient space $V(p)$ for which the linear map $\kappa_p(\mathbf V)$ is a proper morphism.  For each morphism $\alpha:\mathbf{V}\to \mathbf{W}$ in $S\spaces$ given  by a $k$-linear map $\alpha:V\to W,$ where $\mathbf{W}=(W,W(s))_{s\in S},$ it is straightforward that the linear map $\alpha|V(p):V(p)\to W(p)$ gives a morphism $E^p\alpha:E^p\mathbf{V}\to E^p\mathbf{W}$ in $S\spaces.$
 
 We also set $E_p\mathbf{V}=(X,X(s))_{s\in S}$ where $X=V/V(p)$ and $X(s)=(V(s)+ V(p))/V(p),\ s\in S.$   Clearly, $E_p\mathbf{V}$ is an $S$-space trivial at $p,$ and the projection $\pi_p(\mathbf V):V\to V/V(p)$ gives a proper epimorphism $\pi_p(\mathbf V):\mathbf{V}\to E_p\mathbf{V}.$ By Proposition \ref{cat3}(b), $E_p\mathbf{V}$ is the only  $S$-space with the ambient space $V/V(p)$ for which the linear map $\pi_p(\mathbf V)$ is  a proper morphism.  For each morphism $\alpha:\mathbf{V}\to \mathbf{W}$ in $S\spaces$ given  by a $k$-linear map $\alpha:V\to W,$  the linear map $\bar\alpha:V/V(p)\to W/W(p)$ where $\bar\alpha( v+V(p))=\alpha(v)+W(p), v\in V,$ gives a morphism $E_p\alpha:E_p\mathbf{V}\to E_p\mathbf{W}$ in $S\spaces.$
\end{defn}

Recall that a morphism $\alpha:\mathbf{V}\to \mathbf{W}$ in $S\spaces$ {\em factors through} an $S$-space $\mathbf X$ if $\alpha=\beta\gamma,$ for some morphisms $\beta:\mathbf X\to\mathbf W,\ \gamma:\mathbf V\to\mathbf X.$

\begin{prop}\label{prelimdif3}  
\begin{itemize}

\item[(a)] The maps $E^p,E_p:S\spaces\to S\spaces$ are additive endofunctors.

\item[(b)] $\kappa_p:E^p\to1_{S\spaces}$ is a monomorphism, and $\pi_p:1_{S\spaces}\to E_p$ is an epimorphism, of functors.

\item[(c)] $E^p=\Ker\pi_p$  and $E_p=\Coker\kappa_p.$

\item[(d)] Let $\mathbf{V}\in S\spaces.$  The morphism $\kappa_p(\mathbf V):E^p\mathbf V\to \mathbf V$ is left minimal if and only if no nonzero direct summand of $\mathbf{V}$ is trivial at $p.$ The morphism $\pi_p(\mathbf V):\mathbf V\to E_p\mathbf V $ is right minimal if and only if no nonzero direct summand of $\mathbf{V}$ is full at $p.$

\item[(e)] Let $\alpha:\mathbf{V}\to\mathbf{W}$ be a morphism in $S\spaces.$ 

\begin{itemize}
\item[(i)] If  $\phi:E^p\mathbf{V}\to E^p\mathbf{W}$ is a morphism in $S\spaces$ for which the diagram 
$$ 
\begin{CD}
E^p\mathbf V@>\kappa_p(\mathbf V)>>\mathbf V\\
@V{\phi}VV@VV{\alpha}V\\
E^p\mathbf W@>\kappa_p(\mathbf W)>>\mathbf W
\end{CD}
$$
commutes, then $\phi=E^p\alpha.$  

\item[(ii)] If  $\psi:E_p\mathbf{V}\to E_p\mathbf{W}$ is a morphism in $S\spaces$ for which the diagram 
$$ 
\begin{CD}
\mathbf V@>\pi_p(\mathbf V)>>E_p\mathbf V\\
@V{\alpha}VV@VV{\psi}V\\
\mathbf W@>\pi_p(\mathbf W)>>E_p\mathbf W
\end{CD}
$$
commutes, then $\psi=E_p\alpha.$

\item[(iii)] $E^p\alpha=0$ if and only if $\alpha$ factors through an $S$-space trivial at $p,$ and  $E_p\alpha=0$ if and only if $\alpha$ factors through an $S$-space full at $p.$
\end{itemize}
\end{itemize}
\end{prop}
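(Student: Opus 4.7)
The plan is to verify (a)--(c) directly from the definitions, deduce (e) from the commutative squares produced in (b) and (c), and then handle (d) by invoking Fitting's lemma inside the Krull--Schmidt category $S\spaces$.

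For (a), functoriality and additivity of $E^p$ and $E_p$ are immediate from Definition \ref{prelimdif2}: on morphisms, $E^p\alpha$ is the restriction of $\alpha$ to $V(p)$ and $E_p\alpha$ is the induced map on the quotient $V/V(p)$, and both operations plainly preserve identities, composition, and $k$-linear combinations. For (b), each $\kappa_p(\mathbf V)$ is an injective $k$-linear map and each $\pi_p(\mathbf V)$ a surjective one, so Proposition \ref{cat1}(a),(b) makes them a monomorphism and epimorphism in $S\spaces,$ respectively; naturality in $\mathbf V$ boils down to the identity $\alpha\circ\kappa_p(\mathbf V) = \kappa_p(\mathbf W)\circ E^p\alpha$ (both sides equal $\alpha|_{V(p)}$) and its quotient analogue. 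For (c), applying Proposition \ref{cat1}(c),(d) to the maps $\pi_p(\mathbf V)$ and $\kappa_p(\mathbf V)$ reproduces exactly the $S$-space structures of $E^p\mathbf V$ and $E_p\mathbf V$ together with the morphisms $\kappa_p(\mathbf V)$ and $\pi_p(\mathbf V)$.

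Part (e)(i) follows because the naturality square of (b) gives $\kappa_p(\mathbf W)\circ E^p\alpha = \alpha\circ\kappa_p(\mathbf V)$, so any $\phi$ making the stated diagram commute satisfies $\kappa_p(\mathbf W)\circ\phi = \kappa_p(\mathbf W)\circ E^p\alpha$; injectivity of $\kappa_p(\mathbf W)$ forces $\phi = E^p\alpha.$ Part (e)(ii) is dual, using surjectivity of $\pi_p(\mathbf V).$ For (e)(iii), if $\alpha = \beta\gamma$ factors through $\mathbf X$ trivial at $p,$ then $\gamma(V(p))\subseteq X(p)=0,$ so $E^p\gamma=0$ and hence $E^p\alpha=0$ by functoriality; conversely, if $E^p\alpha=0,$ i.e.\ $\alpha(V(p))=0,$ the universal property of the cokernel $\pi_p(\mathbf V)$ recorded in (c) yields $\beta:E_p\mathbf V\to\mathbf W$ with $\alpha=\beta\pi_p(\mathbf V),$ and $E_p\mathbf V$ is trivial at $p.$ The $E_p$ assertion is dual, with $\alpha$ factoring through $E^p\mathbf W,$ which is full at $p.$

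The main obstacle is (d). Necessity is explicit: if $\mathbf V = \mathbf V_1\oplus\mathbf V_2$ with $\mathbf V_1\neq 0$ and $V_1(p)=0,$ then $V(p) = V_2(p),$ and the block endomorphism $\beta = 0_{\mathbf V_1}\oplus\id_{\mathbf V_2}$ satisfies $\beta\circ\kappa_p(\mathbf V)=\kappa_p(\mathbf V)$ yet is not invertible. For sufficiency, assume no nonzero direct summand of $\mathbf V$ is trivial at $p$ and let $\beta\in\End_{S\spaces}\mathbf V$ satisfy $\beta|_{V(p)} = \id_{V(p)}.$ Since every $V(s)$ is $\beta$-invariant and everything is finite-dimensional, Fitting's lemma applied to $\beta$ on $V$ and simultaneously on each $V(s)$ produces, for $N$ sufficiently large, a decomposition $V = \Ker\beta^N\oplus\Image\beta^N$ that restricts to a compatible decomposition of every $V(s)$; this yields $\mathbf V = \mathbf V'\oplus\mathbf V''$ in $S\spaces$ with $\beta|_{\mathbf V'}$ nilpotent and $\beta|_{\mathbf V''}$ an automorphism. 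Because $\beta|_{V(p)}=\id,$ we get $V(p)\subseteq\Image\beta^N,$ hence $V'(p)=0$; the hypothesis forces $\mathbf V'=0,$ so $\beta$ is an automorphism. The assertion about $\pi_p(\mathbf V)$ follows by applying the duality $\D$ of Definition \ref{cat4}, which interchanges monomorphisms with epimorphisms, left minimality with right minimality, and ``trivial at $p$'' with ``full at $p$''.
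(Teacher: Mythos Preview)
Your proof is correct; the paper omits the argument entirely as routine (``The proof is routine, and we leave it to the reader''), so there is no authors' approach to compare against. Your Fitting-lemma argument for (d) is the natural way to fill in the details, and the remaining parts follow directly from Definition~\ref{prelimdif2} and Proposition~\ref{cat1} as you indicate.
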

\begin{proof} The proof is routine, and we leave it to the reader.
\end{proof}

\subsection{Filters, ideals, and a dense functor}\label{prelimdif}  If a poset $S$ satisfies certain conditions, we construct a dense additive functor $S\spaces\to U\spaces,$ for some poset $U,$ and determine which morphisms of $S$-spaces the functor sends to zero.

\begin{prop}\label{prinfilt2}  Let $R$ be a filter of a poset $S$ satisfying $w(S\setminus R)\le2.$  Let $F$ be a filter of $S_R=R\cup\hat a(S\setminus R)$ that does not contain $R$ and let $p\in R\setminus F.$ For any  $F$-space ${\mathbf W}=\bigl(W,W(t)\bigr)_{t\in F},$ let 
\begin{equation}\label{prinfilt3}
f:{\mathbf P}\to\res^{S_{R}}_{\hat a(S\setminus R)}\ind_{F}^{{S_{R}}}{\mathbf W}
\end{equation} 
be a proper epimorphism given by a $k$-linear map $f:P\to W$, where ${\mathbf P}=\bigl(P,P(t)\bigr)_{t\in \hat a(S\setminus R)}$ is a projective $\hat a(S\setminus R)$-space.
\begin{itemize}

\item[(a)] There exists an $S_{R}$-space   ${\mathbf P}_f=\bigl(X,X(t)\bigr)_{t\in S_{R}}$ with $X=P$ for which the map $f:P\to W$ gives a proper epimorphism $\hat f:{\mathbf P}_f\to \ind_{F}^{{S_{R}}}{\mathbf W}$ satisfying $\res^{S_{R}}_{\hat a(S\setminus R)}\hat f=f.$  Moreover, $\Ker \hat f=E^p{\mathbf P}_f$ and $\hat f$ is a cokernel of $\kappa_p({\mathbf P}_f).$

\item[(b)] If ${\mathbf V}=\bigl(V,V(s)\bigr)_{s\in S}$ is a unique up to isomorphism $S$-space satisfying ${\mathbf P}_f\cong\coind_S^{S_{R}}{\mathbf V}$ (see Proposition \ref{pr3}(c)), then 
$${\mathbf W}\cong\res_{F}^{S_{R}}E_p\coind_S^{S_{R}}{\mathbf V}.$$

\item[(c)] If $\alpha:\mathbf{U}\to\mathbf{V}$ is a morphism in $S\spaces,$  then $\res_{F}^{S_{R}}E_p\coind_S^{S_{R}}{\alpha}=0$ if and only if $\alpha$ factors through an $S$-space full at $p.$

\item[(d)] The morphism  (\ref{prinfilt3}) is a projective cover if and only if  the $S$-space $\mathbf V$ in (b) has no nonzero direct summand full at $p$.  Hence if $\mathfrak A$ is the full subcategory of $S\spaces$ determined by the $S$-spaces with no nonzero direct summand full at $p,$  the restriction of the additive functor $\res_{F}^{S_{R}}E_p\coind_S^{S_{R}}:S\spaces \to F\spaces$ to  $\mathfrak A$ is dense.
\end{itemize}
\end{prop}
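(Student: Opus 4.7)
My plan for part (a) is to apply Proposition~\ref{adj1}(a) to extend the $\hat{a}(S\setminus R)$-space $\mathbf{P}$ to the $S_R$-space $\mathbf{P}_f$. This requires $\hat{a}(S\setminus R)$ to be an ideal of $S_R$, which follows from Proposition~\ref{prel5.5}(a): $R$ is a filter of $S_R$, hence its complement $\hat{a}(S\setminus R)$ is an ideal. The construction sets $X(t)=f^{-1}\bigl((\ind^{S_R}_F\mathbf{W})(t)\bigr)$ for $t\in R$ and produces the proper epimorphism $\hat f$ lifting $f$. The crucial calculation will be $(\ind^{S_R}_F\mathbf{W})(p)=0$: since $p\in R\setminus F$ and $F$ is a filter of $S_R$, no $r\in F$ can satisfy $r\le p$. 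Consequently $X(p)=\Ker f$, and comparing Definition~\ref{prelimdif2} with the kernel description in Proposition~\ref{cat1}(c) yields $E^p\mathbf{P}_f=\Ker\hat f$ with inclusion $\kappa_p(\mathbf{P}_f)$; Proposition~\ref{cat3}(d) then identifies $\hat f$ as a cokernel of $\kappa_p(\mathbf{P}_f)$.

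For (b), the hypothesis $w(S\setminus R)\le 2$ activates Proposition~\ref{pr3}(c), making $\coind^{S_R}_S$ an equivalence onto $(S_R\spaces,\hat{a}(S\setminus R)\proj)$; since $\res^{S_R}_{\hat{a}(S\setminus R)}\mathbf{P}_f=\mathbf{P}$ is projective, a unique $\mathbf{V}$ satisfies $\mathbf{P}_f\cong\coind^{S_R}_S\mathbf{V}$. Part (a) and Proposition~\ref{prelimdif3}(c) together give $\ind^{S_R}_F\mathbf{W}\cong E_p\mathbf{P}_f=E_p\coind^{S_R}_S\mathbf{V}$, and applying $\res^{S_R}_F$ with $\res^{S_R}_F\ind^{S_R}_F=1_{F\spaces}$ from Proposition~\ref{prel6}(b) finishes (b). For (c), I would first use the faithfulness of $\res^{S_R}_F$ (Proposition~\ref{prel6}(b)) to reduce the claim to $E_p\coind^{S_R}_S\alpha=0$ iff $\alpha$ factors through an $S$-space full at $p$; by Proposition~\ref{prelimdif3}(e)(iii) the left side is equivalent to $\coind^{S_R}_S\alpha$ factoring through an $S_R$-space full at $p$. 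The implication $(\Leftarrow)$ holds because $(\coind^{S_R}_S\mathbf{X})(p)=X(p)$ for $p\in S$, so $\coind^{S_R}_S$ carries fullness at $p$ forward; for $(\Rightarrow)$ I apply $\res^{S_R}_S$, which retracts $\coind^{S_R}_S$ by Proposition~\ref{prel6}(b), to a factorization through a full-at-$p$ $S_R$-space $\mathbf{Y}$, and observe that $\res^{S_R}_S\mathbf{Y}$ remains full at $p\in S$ because the ambient space is unchanged.

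For (d), I chain equivalences: $f$ is a projective cover iff $f$ is right minimal (Theorem~\ref{projectives}(f)) iff $\hat f$ is right minimal (Proposition~\ref{adj1}(d)); since $\hat f$ and $\pi_p(\mathbf{P}_f)$ are both cokernels of $\kappa_p(\mathbf{P}_f)$, they differ by an isomorphism of target, so Proposition~\ref{prelimdif3}(d) translates this to the absence of nonzero direct summands of $\mathbf{P}_f$ full at $p$. The equivalence of Proposition~\ref{pr3}(c), combined with the identity $(\coind^{S_R}_S\mathbf{V}_1)(p)=V_1(p)$ for $p\in S$, then transports this condition between $\mathbf{P}_f$ and $\mathbf{V}$; any direct summand of $\mathbf{P}_f$ does lie in $(S_R\spaces,\hat{a}(S\setminus R)\proj)$ because a direct summand of a projective is projective, so the equivalence applies to such summands. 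For density, given $\mathbf{W}\in F\spaces$ I would take a projective cover $f$ of $\res^{S_R}_{\hat{a}(S\setminus R)}\ind^{S_R}_F\mathbf{W}$ (Theorem~\ref{projectives}(e)) and apply (a) and (b) to produce $\mathbf{V}$ with $\res^{S_R}_F E_p\coind^{S_R}_S\mathbf{V}\cong\mathbf{W}$; the iff just proved guarantees $\mathbf{V}\in\mathfrak{A}$. The hardest step will be the bookkeeping: verifying that fullness at $p$ and direct-sum decompositions transfer correctly across the coinduction equivalence of Proposition~\ref{pr3}(c) and the ideal-extension construction of Proposition~\ref{adj1}.
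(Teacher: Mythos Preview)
Your proposal is correct and follows essentially the same route as the paper's proof: Proposition~\ref{prel5.5}(a) and Proposition~\ref{adj1}(a,d,e) for the construction and properties of $\mathbf P_f$ and $\hat f$, Proposition~\ref{prelimdif3}(c,d,e) for the $E^p/E_p$ identifications, and Proposition~\ref{prel6}(b) together with Proposition~\ref{pr3}(c) to pass between $S$-spaces and $S_R$-spaces. The only cosmetic differences are that in (a) you compute $(\ind^{S_R}_F\mathbf W)(p)=0$ directly from the filter condition rather than quoting Proposition~\ref{adj1}(e), and in (d) you spell out the density argument (choose a projective cover via Theorem~\ref{projectives}(e), then invoke the ``iff'') which the paper leaves implicit in the word ``Hence''.
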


\begin{proof} (a)  By Proposition \ref{prel5.5}(a),   $\hat a(S\setminus R)=S_{R}\setminus {R}$ is an ideal of $S_{R},$  so the existence of $\hat f$ follows from Proposition \ref{adj1}(a).  Since $F$ is a filter of $S_{R}$ and $p\in R\setminus F=S_R\setminus [\hat a(S\setminus R)\cup F]$,  Proposition \ref{adj1}(e) says that $X(p)=\Ker f.$  By Proposition \ref{cat1}(c), $X(p)$ is the ambient space of $\Ker\hat f,$  and the  inclusion $\kappa_p({\mathbf P}_f):X(p)\to X$ gives a kernel of $\hat f,$ $\Ker\hat f\to\mathbf P_f,$ which is a proper monomorphism by Proposition \ref{cat3}(c).  By the remark about the uniqueness of the subspace structure on $X(p)$ made in Definition \ref{prelimdif2}, $\Ker \hat f=E^p{\mathbf P}_f.$ Since $\hat f$ is a proper epimorphism, it is a cokernel of its kernel by Proposition \ref{cat3}(d), which finishes the proof of (a).

(b)  By (a) and Proposition \ref{prelimdif3}(c), $\ind_{F}^{{S_R}}{\mathbf W}\cong E_p{\mathbf P}_f\cong E_p\coind_S^{S_{R}}{\mathbf V}.$ 
Applying the functor $\res_{F}^{{S_{R}}}$ and using Proposition \ref{prel6}(b), we get  ${\mathbf W}\cong\res_{F}^{S_{R}}E_p\coind_S^{S_{R}}{\mathbf V}.$

(c) Since $\res_{F}^{S_{R}}$ is a faithful functor by Proposition \ref{prel6}(b), $\res_{F}^{S_{R}}E_p\coind_S^{S_{R}}{\alpha}=0$ if and only if $E_p\coind_S^{S_{R}}{\alpha}=0;$ if and only if $\coind_S^{S_{R}}{\alpha}$ factors through an $S_R$-space full at $p$ according to part (iii) of Proposition \ref{prelimdif3}(e); if and only if $\alpha$ factors through an $S$-space full at $p$ in view of 
Proposition \ref{prel6}(b): indeed, if $\coind_S^{S_{R}}{\alpha}=\beta\gamma$ then ${\alpha}=\res_S^{S_{R}}\beta\circ\res_S^{S_{R}}\gamma,$ and the converse is clear.

(d) By Theorem \ref{projectives}(f), the morphism $f:{\mathbf P}\to\res^{S_{R}}_{\hat a(S\setminus R)}\ind_{F}^{{S_{R}}}{\mathbf W}$ is a projective cover if and only if it is right minimal; if and only if the morphism $\hat f:{\mathbf P}_f\to \ind_{F}^{{S_{R}}}{\mathbf W}$ is right minimal according to Proposition \ref{adj1}(d) (remember, $\hat a(S\setminus R)$ is an ideal of $S_R$); if and only if the morphism $\pi_p(\mathbf P_f):\mathbf P_f\to E_p\mathbf P_f$ is right minimal using the fact that $\hat f$ is a proper epimorphism and $\Ker\hat f=E^p\mathbf P_f$ by (a);  if and only if  $\pi_p(\coind^{S_R}_S\mathbf V):\coind^{S_R}_S\mathbf V\to E_p\coind^{S_R}_S\mathbf V$ is right minimal using  (b); if and only if no nonzero direct summand of $\coind^{S_R}_S\mathbf V$ is full at $p$ according to Proposition \ref{prelimdif3}(d); if and only if no nonzero direct summand of $\mathbf V$ is full at $p$ using Proposition \ref{prel6}(b).
\end{proof}

While Proposition \ref{prinfilt2} deals with two filters, the dual statement deals with two ideals.

\begin{prop}\label{prinideal2} Let $R$ be an ideal of a poset $S$ satisfying $w(S\setminus R)\le2.$ Let $J$ be an ideal of $S^R=R\cup\check a(S\setminus R)$ that does not contain $R$ and let $p\in R\setminus J.$ For any  $J$-space ${\mathbf W}=\bigl(W,W(t)\bigr)_{t\in J},$ let 
\begin{equation}\label{prinideal3}
g:\res_{\check{a}(S\setminus R)}^{S^{R}}\coind_{J}^{S^{R}}\mathbf W\to\mathbf I
\end{equation} 
be a proper monomorphism given by a $k$-linear map $g:W\to I$, where ${\mathbf I}=(I,I(t))_{t\in \check{a}(S\setminus R)}$ is an injective $\check{a}(S\setminus R)$-space.
\begin{itemize}

\item[(a)] There exists an $S^{R}$-space   ${\mathbf I}^g=\bigl(Y,Y(t)\bigr)_{t\in S^{R}}$ with $Y=I$ for which the map $g:W\to I$ gives a proper monomorphism $\check g:\coind_{J}^{S^{R}}{\mathbf W}\to {\mathbf I}^g$ satisfying $\res^{S^{R}}_{\check a(S\setminus R)}\check g=g.$  Moreover, $\Coker\check g=E_p\mathbf I^g$ and $\check g$ is a kernel of $\pi_p(\mathbf I^g)$.

\item[(b)] If ${\mathbf V}=\bigl(V,V(s)\bigr)_{s\in S}$ is a unique up to isomorphism $S$-space satisfying ${\mathbf I}^g\cong\ind_S^{S^{R}}{\mathbf V}$ (see Proposition \ref{pr3}(c)), then 
$${\mathbf W}\cong\res_{J}^{S^{R}}E^p\ind_S^{S^{R}}{\mathbf V}.$$

\item[(c)] If $\alpha:\mathbf U\to\mathbf V$ is a morphism in $S\spaces,$ then $\res_{J}^{S^{R}}E^p\ind_S^{S^{R}}{\alpha}=0$ if and only if $\alpha$  factors through an $S$-space trivial at $p.$

\item[(d)] The morphism  (\ref{prinideal3}) is an injective envelope if and only if the $S$-space $\mathbf V$ in (b) has no nonzero direct summand trivial at $p$. Hence if $\mathfrak B$ is the full subcategory of $S\spaces$ determined by the $S$-spaces with no nonzero direct summand trivial at $p,$  the restriction of the additive functor $\res_{J}^{S^{R}}E^p\ind_S^{S^{R}}:S\spaces \to J\spaces$ to  $\mathfrak B$ is dense.
\end{itemize}
\end{prop}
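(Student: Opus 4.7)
The proof is essentially the literal dualization of the proof of Proposition~\ref{prinfilt2}, with filters replaced by ideals, epimorphisms by monomorphisms, $E_p$ by $E^p$, $\coind$ by $\ind$, $\hat a$ by $\check a$, and so on. I would follow the same four-step structure.

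\textbf{Step (a).} Since $R$ is an ideal of $S$, Proposition~\ref{prel5.5}(b) tells us that $\check a(S\setminus R)=S^R\setminus R$ is a filter of $S^R$. Hence Proposition~\ref{adj1.5}(a) applies to the proper monomorphism $g$ and produces the $S^R$-space $\mathbf I^g$ together with the proper monomorphism $\check g:\coind^{S^R}_J\mathbf W\to\mathbf I^g$ satisfying $\res^{S^R}_{\check a(S\setminus R)}\check g=g$. Because $J$ is an ideal of $S^R$ with $\mathbf V\cong\coind^{S^R}_J\mathbf W$ (via $\coind^{S^R}_J$ applied to $\mathbf W$), and because $p\in R\setminus J=S^R\setminus[\check a(S\setminus R)\cup J]$, Proposition~\ref{adj1.5}(e) yields $Y(p)=\Image g$. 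Proposition~\ref{cat1}(d) then identifies the ambient space of $\Coker\check g$ with $I/Y(p)$, and the projection $\pi_p(\mathbf I^g):Y\to Y/Y(p)$ gives a cokernel of $\check g$; that cokernel is a proper epimorphism by Proposition~\ref{cat3}(c), so by the uniqueness clause of Definition~\ref{prelimdif2} we conclude $\Coker\check g=E_p\mathbf I^g$. Finally, since $\check g$ is a proper monomorphism, Proposition~\ref{cat3}(d) identifies it with the kernel of its cokernel, i.e.\ with a kernel of $\pi_p(\mathbf I^g)$.

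\textbf{Step (b).} By Step (a) together with Proposition~\ref{prelimdif3}(c), $\coind^{S^R}_J\mathbf W\cong\Ker\pi_p(\mathbf I^g)=E^p\mathbf I^g\cong E^p\ind^{S^R}_S\mathbf V$. Apply $\res^{S^R}_J$ and use $\res^{S^R}_J\coind^{S^R}_J=1_{J\spaces}$ from Proposition~\ref{prel6}(b) to obtain $\mathbf W\cong\res^{S^R}_JE^p\ind^{S^R}_S\mathbf V$.

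\textbf{Steps (c) and (d).} For (c), $\res^{S^R}_J$ is faithful (Proposition~\ref{prel6}(b)), so $\res^{S^R}_JE^p\ind^{S^R}_S\alpha=0$ is equivalent to $E^p\ind^{S^R}_S\alpha=0$; by part (iii) of Proposition~\ref{prelimdif3}(e) this is equivalent to $\ind^{S^R}_S\alpha$ factoring through an $S^R$-space trivial at $p$; and, again using $\res^{S^R}_S\ind^{S^R}_S=1_{S\spaces}$, this is in turn equivalent to $\alpha$ itself factoring through an $S$-space trivial at $p$. For (d), I would chain together equivalences exactly as in the proof of Proposition~\ref{prinfilt2}(d), using the dual inputs throughout: the morphism~(\ref{prinideal3}) is an injective envelope iff it is left minimal (Theorem~\ref{injectives}(d)); iff $\check g$ is left minimal (Proposition~\ref{adj1.5}(d), valid because $\check a(S\setminus R)$ is a filter of $S^R$); iff $\pi_p(\mathbf I^g):\mathbf I^g\to E_p\mathbf I^g$ is the cokernel of a left minimal monomorphism, which forces (via Step (a), where $\check g=\ker\pi_p(\mathbf I^g)$) $\kappa_p(\mathbf I^g)$ to be left minimal; iff $\kappa_p(\ind^{S^R}_S\mathbf V)$ is left minimal; iff no nonzero summand of $\ind^{S^R}_S\mathbf V$ is trivial at $p$, by Proposition~\ref{prelimdif3}(d); iff no nonzero summand of $\mathbf V$ is trivial at $p$, by Proposition~\ref{prel6}(b). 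The density statement for the restriction of the functor to $\mathfrak B$ is then immediate from Step (b) and the existence of injective envelopes (Theorem~\ref{injectives}(c)) applied to $\coind^{S^R}_J\mathbf W$ restricted to $\check a(S\setminus R)$.

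\textbf{Main obstacle.} There is no mathematical obstacle beyond bookkeeping: the only place to be careful is in Step (d), where one must justify the passage between left minimality of $\check g$ and of $\kappa_p(\mathbf I^g)$. This is not merely formal, since $\check g=\ker\pi_p(\mathbf I^g)$ and the identification with $\kappa_p(\mathbf I^g)$ uses the uniqueness of the $S^R$-space structure on the kernel (Definition~\ref{prelimdif2}), together with the fact that essential monomorphism, left minimality, and projective-cover-style right minimality are related by the usual duality. Once this identification is fixed, the rest of the chain follows by verbatim dualization.
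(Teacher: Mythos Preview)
Your proposal is correct and is precisely the explicit dualization the paper intends: the paper's own proof reads in its entirety ``The proof is dual to that of Proposition~\ref{prinfilt2},'' and you have carried this out faithfully, citing the dual inputs (Proposition~\ref{prel5.5}(b), Proposition~\ref{adj1.5}, Theorem~\ref{injectives}(c)(d), Proposition~\ref{prelimdif3}(c)(d)(e)) at each point. The one place worth tightening is your Step~(d): rather than routing through ``$\pi_p(\mathbf I^g)$ is the cokernel of a left minimal monomorphism,'' it is cleaner to note that by Step~(a) the morphism $\check g$ is a kernel of $\pi_p(\mathbf I^g)$ and so is $\kappa_p(\mathbf I^g)$ (Proposition~\ref{prelimdif3}(c)), hence the two differ by an isomorphism on the domain and one is left minimal iff the other is --- this is exactly the dual of the corresponding step in the proof of Proposition~\ref{prinfilt2}(d), and your ``Main obstacle'' paragraph already identifies this correctly.
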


\begin{proof} The proof is dual to that of Proposition \ref{prinfilt2}.
\end{proof}

In the following subsections we will apply Propositions \ref{prinfilt2} and \ref{prinideal2} by making specific choices for the indicated filters and ideals.

 \subsection{Differentiation with respect to a principal filter}\label{prinfilt}
\begin{defn}\label{prinfilt1}  For any $p\in S$, the poset $S_{\langle p\rangle}=\langle p\rangle\cup\hat a\bigl(S\setminus \langle p\rangle\bigr)$ is a subposet of $\hat a(S),$ and Propositions  \ref{prel2.5}(a) and \ref{prel.5}(b) imply that $\hat a((p))$ is the principal ideal of $S_{\langle p\rangle}$ generated by $p.$  Hence 
$$S_p=S_{\langle p\rangle}\setminus\hat a((p))=\bigl(\langle p\rangle\setminus \{p\}\bigr)\cup\hat a\bigl(S\setminus[\langle p\rangle\cup(p)]\bigr)$$ 
is a filter of $S_{\langle p\rangle}$ satisfying $\{p\}=\langle p\rangle\setminus S_p.$ 

For the rest of this subsection we assume that  $w(S\setminus \langle p\rangle)\le2.$  Then Proposition \ref{prinfilt2} applies to the filters $R={\langle p\rangle}$ of $S$  and $F=S_p$ of $S_{\langle p\rangle},$ and we  say that the functor $\res_{S_p}^{S_{\langle p\rangle}}E_p\coind_S^{S_{\langle p\rangle}}:S\spaces\to S_p\spaces$ suggested by Proposition \ref{prinfilt2}(b) is the {\em differentiation functor}, and $S_p$ is the {\it derived poset} of $S,$ with respect to the principal filter $\langle p\rangle$.  
\end{defn}

 Recall that the category $S\spaces$ is a {\em $k$-category}, i.e., $\Hom_{S\spaces}(\mathbf U,\mathbf V)$ is a $k$-vector space, for all $\mathbf U,\mathbf V,$ and the composition of morphisms is bilinear (see ~\cite[Section II.1]{ARS}).  Denote by $\mathfrak F(\mathbf U,\mathbf V)$ the subset of $\Hom_{S\spaces}(\mathbf U,\mathbf V)$  consisting of all morphisms that factor through an $S$-space full at $p.$  Then $\mathfrak F$ is a {\em two-sided ideal} in the category $S\spaces$ (a {\em relation} on $S\spaces$ according to  ~\cite{ARS}), i.e.,  $\mathfrak F(\mathbf U,\mathbf V)$ is a $k$-subspace of $\Hom_{S\spaces}(\mathbf U,\mathbf V),$ for all $\mathbf U,\mathbf V,$ and whenever $g\in\mathfrak F(\mathbf U,\mathbf V),\, f\in\Hom_{S\spaces}(\mathbf X,\mathbf U),\, h\in\Hom_{S\spaces}(\mathbf V,\mathbf W),$ we have $hgf\in\mathfrak F(\mathbf X,\mathbf W).$ One defines  $S\spaces/\mathfrak F,$ the {\em quotient category} ({\em factor category} according to  ~\cite{ARS}) of $S\spaces$ modulo the ideal $\mathfrak F,$ as follows.   The objects of $S\spaces/\mathfrak F$ are the same as those of $S\spaces.$  The morphisms from $\mathbf U$ to $\mathbf V$ are the elements of the quotient (factor) space $\Hom_{S\spaces}(\mathbf U,\mathbf V)/\mathfrak F(\mathbf U,\mathbf V),$  and the composition in $S\spaces/\mathfrak F$ is defined for $\mathbf U,\mathbf V,\mathbf W$ in $S\spaces/\mathfrak F$ by $\bigl(h+\mathfrak F(\mathbf V,\mathbf W)\bigr)\bigl(g+\mathfrak F(\mathbf U,\mathbf V)\bigr)=\bigl(hg+\mathfrak F(\mathbf U,\mathbf W)\bigr),$ for all $g\in\Hom_{S\spaces}(\mathbf U,\mathbf V)$ and $h\in\Hom_{S\spaces}(\mathbf V,\mathbf W).$ 

\begin{thm}\label{prinfilt4}
\begin{itemize}

\item[(a)] Denote by $\mathfrak A$ the full subcategory of $S\spaces$ determined by the $S$-spaces with no nonzero direct summand full at $p.$  The restriction  of the functor $\res_{S_p}^{S_{\langle p\rangle}}E_p\coind_S^{S_{\langle p\rangle}}:S\spaces\to S_p\spaces$ to the subcategory $\mathfrak A$ is a representation equivalence of categories $\mathfrak A\to S_p\spaces.$

\item[(b)]  The functor $\res_{S_p}^{S_{\langle p\rangle}}E_p\coind_S^{S_{\langle p\rangle}}$ induces an equivalence of categories $S\spaces/\mathfrak F\cong  S_p\spaces.$

\item[(c)]  For each $S$-space $\mathbf V=\bigl(V,V(s)\bigr)_{s\in S},$ we have $\res_{S_p}^{S_{\langle p\rangle}}E_p\coind_S^{S_{\langle p\rangle}}\mathbf V=\bigl(X,X(t)\bigr)_{t\in S_p}$ where $X=V/V(p),\ X(t)=\bigl(V(t)+V(p)\bigr)/V(p)$ if $t\in S\setminus (p),$ and $X(a\wedge b)=(V(a)\cap V(b)+V(p))/V(p)$ if $a,b\in S\setminus [\langle p\rangle\cup(p)]$ and $\{p,a,b\}$ is an antichain.  If $\mathbf U=\bigl(U,U(s)\bigr)_{s\in S}$ is an $S$-space and $f:\mathbf U\to\mathbf V$ is a mophism given by a $k$-linear map $f:U\to V,$ then the morphism $\res_{S_p}^{S_{\langle p\rangle}}E_p\coind_S^{S_{\langle p\rangle}}f$ is given by the $k$-linear map $\bar f:U/U(p)\to V/V(p)$ where $\bar f(u+U(p))=f(u)+V(p),u\in U.$
\end{itemize}
\end{thm}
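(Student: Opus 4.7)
Part (c) will be verified by unfolding the composite functor step by step. Applying $\coind^{S_{\langle p\rangle}}_S$ to $\mathbf V$ yields an $S_{\langle p\rangle}$-space with ambient $V$, subspace $V(s)$ at each $s\in S$, and subspace $V(a_1)\cap\cdots\cap V(a_m)$ at each antichain $a_1\wedge\cdots\wedge a_m\in\hat a(S\setminus\langle p\rangle)$, as in Subsection \ref{adj}. Applying $E_p$ (Definition \ref{prelimdif2}) quotients each subspace by $V(p)$, and $\res^{S_{\langle p\rangle}}_{S_p}$ retains only the indices in $S_p=(\langle p\rangle\setminus\{p\})\cup\hat a(S\setminus[\langle p\rangle\cup(p)])$. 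The identification $S_p\cap S=S\setminus(p)$ together with the observation that $w(S\setminus\langle p\rangle)\le 2$ forces every non-singleton antichain in $S_p\setminus S$ to be a pair $a\wedge b$ with $\{p,a,b\}$ an antichain in $S$ then yields the displayed formulas; the action on morphisms is immediate from the naturality of each of the three functors.

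For (b), Proposition \ref{prinfilt2}(c) gives that $F\alpha=0$ if and only if $\alpha\in\mathfrak F$, so the induced functor $\bar F:S\spaces/\mathfrak F\to S_p\spaces$ is well-defined and faithful; Proposition \ref{prinfilt2}(d) gives density of $F|\mathfrak A$ and hence of $\bar F$. The substantive task will be fullness of $\bar F$. Given $\phi:F\mathbf U\to F\mathbf V$, the plan is first to extend $\phi$ to a morphism $\tilde\phi:E_p\coind^{S_{\langle p\rangle}}_S\mathbf U\to E_p\coind^{S_{\langle p\rangle}}_S\mathbf V$ in $S_{\langle p\rangle}\spaces$ via the equivalence of Proposition \ref{prel6}(e), which applies because $E_p\coind\mathbf U$ and $E_p\coind\mathbf V$ are both trivial at the complementary ideal $(p)$ of the filter $S_p$ in $S_{\langle p\rangle}$. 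By Proposition \ref{prinfilt2}(a) the proper epimorphism $\hat f_U:\coind\mathbf U\to\ind^{S_{\langle p\rangle}}_{S_p}F\mathbf U$ coincides with $\pi_p(\coind\mathbf U)$ after identifying $\ind^{S_{\langle p\rangle}}_{S_p}F\mathbf U$ with $E_p\coind\mathbf U$, and similarly for $\mathbf V$. Using projectivity of $\res^{S_{\langle p\rangle}}_{\hat a(S\setminus\langle p\rangle)}\coind\mathbf U$ (Proposition \ref{pr2}(b)), I will lift $\res\tilde\phi\circ\res\hat f_U$ along the proper epimorphism $\res\hat f_V$ to obtain a morphism $\alpha_{\hat a}$ in $\hat a(S\setminus\langle p\rangle)\spaces$. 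Since $\hat a(S\setminus\langle p\rangle)$ is an ideal of $S_{\langle p\rangle}$ (Proposition \ref{prel5.5}(a)), Proposition \ref{adj1}(b) then promotes $\alpha_{\hat a}$ to $\alpha':\coind\mathbf U\to\coind\mathbf V$ in $S_{\langle p\rangle}\spaces$ satisfying $\tilde\phi\circ\pi_p(\coind\mathbf U)=\pi_p(\coind\mathbf V)\circ\alpha'$. Full faithfulness of $\coind$ (Proposition \ref{prel6}(b)) writes $\alpha'=\coind\alpha$ for a unique $\alpha:\mathbf U\to\mathbf V$, and Proposition \ref{prelimdif3}(e)(ii) forces $E_p\alpha'=\tilde\phi$, whence $F\alpha=\res\tilde\phi=\phi$ using $\res\ind=1$.

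Part (a) then follows: density of $F|\mathfrak A$ is Proposition \ref{prinfilt2}(d); fullness of $F|\mathfrak A$ is inherited from the fullness of $\bar F$ because $\mathfrak A$ is a full subcategory of $S\spaces$; and $F|\mathfrak A$ is injective on isomorphism classes of objects because, by Proposition \ref{prinfilt2}(b) combined with Proposition \ref{pr3}(c), every $S_p$-space has a unique-up-to-isomorphism preimage in $\mathfrak A$ under the projective-cover construction. Combined with the Krull-Schmidt property of $S\spaces$, this yields the representation equivalence claim. The most delicate point will be the third step of the fullness argument: one must verify that the cokernel identification $E_p\coind\mathbf V\cong\ind^{S_{\langle p\rangle}}_{S_p}F\mathbf V$ supplied by $\hat f_V$ in Proposition \ref{prinfilt2}(a) is compatible with the identification used to produce $\tilde\phi$ from $\phi$, so that Propositions \ref{adj1}(b) and \ref{prelimdif3}(e)(ii) splice together without any mismatch of canonical isomorphisms.
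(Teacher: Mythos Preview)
Your argument is correct and very close to the paper's. The fullness proof is essentially identical: both you and the paper pass to $\hat a(S\setminus\langle p\rangle)\spaces$, use projectivity of $\res^{S_{\langle p\rangle}}_{\hat a(S\setminus\langle p\rangle)}\coind^{S_{\langle p\rangle}}_S\mathbf V$ (your citation should be Proposition~\ref{pr3}(c) rather than \ref{pr2}(b)) to lift along the proper epimorphism, invoke Proposition~\ref{adj1}(b) to promote the lift to $S_{\langle p\rangle}\spaces$, and then Proposition~\ref{prelimdif3}(e)(ii) together with full faithfulness of $\coind$ to finish.

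The one genuine difference is how ``reflects isomorphisms'' is obtained in part~(a). The paper argues directly: given $\gamma$ with $E_p\coind\gamma$ an isomorphism, it uses Proposition~\ref{adj1}(c),(d) to identify $\alpha'=\hat\alpha$, then shows that the restricted morphisms $f,g$ are projective covers via right minimality (Proposition~\ref{prelimdif3}(d) and Theorem~\ref{projectives}(f)), concluding that $\alpha$ and hence $\gamma$ is an isomorphism. You instead deduce injectivity on isomorphism classes from the uniqueness of the projective-cover preimage (Propositions~\ref{prinfilt2}(b),(d) and \ref{pr3}(c)) and then appeal to Krull--Schmidt. That route is valid, but note it is not purely formal: you need that the kernel of $\End(\mathbf V)\to\End(F\mathbf V)$ lies in the Jacobson radical for $\mathbf V\in\mathfrak A$, which follows because any $\alpha$ with $F\alpha=0$ factors through an object full at $p$ (Proposition~\ref{prinfilt2}(c)), and an isomorphism between indecomposable summands of $\mathbf V$ cannot so factor since $\mathbf V\in\mathfrak A$. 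The paper's direct argument avoids this extra step; your ordering (prove (b) first, then derive (a)) is otherwise a harmless reshuffling.
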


\begin{proof}  (a)  The functor is dense according to Proposition  \ref{prinfilt2}(d), so it remains to show that the functor is full and reflects isomorphisms.  

As noted in Definition \ref{prinfilt1},  $S_p$ is a filter  of $S_{\langle p\rangle},$ and $S_{\langle p\rangle}\setminus S_p$ is the principal ideal of $S_{\langle p\rangle}$ generated by $p.$ By Proposition \ref{prel6}(e),  $\res^{S_{\langle p\rangle}}_{S_p}|\mathfrak C:\mathfrak C\to S_p\spaces$ is an equivalence of categories, where $\mathfrak C$ is the full subcategory of $S_{\langle p\rangle}\spaces$ determined by the $S_{\langle p\rangle}$-spaces trivial at $p.$  By  Definition \ref{prelimdif2}, the image of the functor $E_p:S_{\langle p\rangle}\spaces\to S_{\langle p\rangle}\spaces$ is contained in $\mathfrak C.$ Hence it suffices to show that the functor $E_p\coind_S^{S_{\langle p\rangle}}:\mathfrak A\to {S_{\langle p\rangle}}\spaces$  is full and reflects isomorphisms. Multiplying the epimorphism of functors $\pi_p:1_{S_{\langle p\rangle}\spaces}\to E_p$ by the functor $\coind_S^{S_{\langle p\rangle}}$ on the right, we obtain an epimorphism of functors $\pi_p\coind_S^{S_{\langle p\rangle}}:\coind_S^{S_{\langle p\rangle}}\to E_p\coind_S^{S_{\langle p\rangle}}.$

Since $\langle p\rangle$ is a filter of $S_{\langle p\rangle}$  by Proposition \ref{prel5.5}(a), then $\hat a(S\setminus\langle p\rangle)$  is an ideal, so Proposition  \ref{adj1} applies.

We show that the functor $E_p\coind_S^{S_{\langle p\rangle}}:S\spaces\to  {S_{\langle p\rangle}}\spaces$ is full. Let $\mathbf {V}=\bigl(V,V(s)\bigr)_{s\in S},\ \mathbf {Z}=\bigl(Z,Z(s)\bigr)_{s\in S}$ be $S$-spaces and let $\beta:E_p\coind_S^{S_{\langle p\rangle}}\mathbf V\to E_p\coind_S^{S_{\langle p\rangle}}\mathbf Z$ be a morphism in $S_{\langle p\rangle}\spaces.$    We obtain the diagram 

$$
\begin{CD}
\coind_S^{S_{\langle p\rangle}}\mathbf V@>\pi_p\bigl(\coind_S^{S_{\langle p\rangle}}\mathbf V\bigr)>>E_p\coind_S^{S_{\langle p\rangle}}\mathbf V\\
@.@VV{\beta}V\\
\coind_S^{S_{\langle p\rangle}}\mathbf Z@>\pi_p\bigl(\coind_S^{S_{\langle p\rangle}}\mathbf Z\bigr)>>E_p\coind_S^{S_{\langle p\rangle}}\mathbf Z
\end{CD}
$$
in $S_{\langle p\rangle}\spaces.$ As noted in Definition \ref{prelimdif2}, the horizontal arrows are proper epimorphisms.  
 
Applying the functor $\res^{S_{\langle p\rangle}}_{\hat a(S\setminus\langle p\rangle)},$ we obtain the commutative diagram
\begin{equation}\label{prinfilt5} 
\begin{CD}
\res^{S_{\langle p\rangle}}_{\hat a(S\setminus\langle p\rangle)}\coind_S^{S_{\langle p\rangle}}\mathbf V@>f>>\res^{S_{\langle p\rangle}}_{\hat a(S\setminus\langle p\rangle)}E_p\coind_S^{S_{\langle p\rangle}}\mathbf V\\
@V\alpha VV@VV{\res^{S_{\langle p\rangle}}_{\hat a(S\setminus\langle p\rangle)}\beta}V\\
\res^{S_{\langle p\rangle}}_{\hat a(S\setminus\langle p\rangle)}\coind_S^{S_{\langle p\rangle}}\mathbf Z@>g>>\res^{S_{\langle p\rangle}}_{\hat a(S\setminus\langle p\rangle)}E_p\coind_S^{S_{\langle p\rangle}}\mathbf Z
\end{CD}
\end{equation}
in $\hat a(S\setminus\langle p\rangle)\spaces,$  where $f=\res^{S_{\langle p\rangle}}_{\hat a(S\setminus\langle p\rangle)}\pi_p\bigl(\coind_S^{S_{\langle p\rangle}}\mathbf V\bigr),\ g=\res^{S_{\langle p\rangle}}_{\hat a(S\setminus\langle p\rangle)}\pi_p\bigl(\coind_S^{S_{\langle p\rangle}}\mathbf Z\bigr)$ and we denote by the same letters the $k$-linear maps that give these morphisms: $f:V\to V/V(p),\ g: Z\to Z/Z(p).$ The morphism $\alpha$ making the diagram commute exists because $f$ and $g$ are proper epimorphisms by Proposition \ref{cat1}(b) and Remark \ref{adj2}, and  Proposition \ref{pr3}(c) says that the domains of $f$ and $g$ are projective because $w(S\setminus\langle p\rangle)\le2.$

The subspaces $V(p)\subset V$ and $Z(p)\subset Z$ are the kernels of the $k$-linear maps $f:V\to V/V(p)$ and $g:Z\to Z/Z(p),$ respectively,  whence $V(t)=f^{-1}\bigl[\bigl(V(t)+V(p)\bigr)/V(p)\bigr]$ and $Z(t)=f^{-1}\bigl[\bigl(Z(t)+Z(p)\bigr)/Z(p)\bigr]$ for all $t\ge p,\ t\in S_{\langle p\rangle}.$  In the notation of Proposition  \ref{adj1}(a), we have $\hat f=\pi_p\bigl(\coind_S^{S_{\langle p\rangle}}\mathbf V\bigr)$ and $\hat g=\pi_p\bigl(\coind_S^{S_{\langle p\rangle}}\mathbf Z\bigr),$ and Proposition \ref{adj1}(b)  gives the  commutative diagram

\begin{equation}\label{prinfilt4.5} 
\begin{CD}
\coind_S^{S_{\langle p\rangle}}\mathbf V@>\hat f>>E_p\coind_S^{S_{\langle p\rangle}}\mathbf V\\
@V{\alpha'}VV@VV{\beta}V\\
\coind_S^{S_{\langle p\rangle}}\mathbf Z@>\hat g>>E_p\coind_S^{S_{\langle p\rangle}}\mathbf Z
\end{CD}
\end{equation}
in $S_{\langle p\rangle}\spaces,$ where $\res^{S_{\langle p\rangle}}_{\hat a(S\setminus\langle p\rangle)}\alpha'=\alpha.$
By Proposition \ref{prelimdif3}(e) we have $\beta=E_p\alpha',$  and Proposition \ref{prel6}(b) says that $\alpha'=\coind_S^{S_{\langle p\rangle}}\gamma,$ for some morphism $\gamma$ in $S\spaces.$  Thus $\beta=E_p\coind_S^{S_{\langle p\rangle}}\gamma,$ which proves that the functor $E_p\coind_S^{S_{\langle p\rangle}}$ is full.

To show the functor $E_p\coind_S^{S_{\langle p\rangle}}:\mathfrak A\to {S_{\langle p\rangle}}\spaces$ reflects isomorphisms, let $\mathbf V,\mathbf Z\in\mathfrak A$ and let $\gamma:\mathbf V\to\mathbf Z$ be a morphism in $S\spaces$ for which  $\beta=E_p\coind_S^{S_{\langle p\rangle}}\gamma:E_p\coind_S^{S_{\langle p\rangle}}\mathbf V\to E_p\coind_S^{S_{\langle p\rangle}}\mathbf Z$ is an isomorphism in $S_{\langle p\rangle}\spaces.$ Setting $\alpha'=\coind_S^{S_{\langle p\rangle}}\gamma:\coind_S^{S_{\langle p\rangle}}\mathbf V\to\coind_S^{S_{\langle p\rangle}}\mathbf Z,$ we see that $\alpha'$ and $\beta$ just defined make the diagram (\ref{prinfilt4.5}) commute because $\pi_p\coind_S^{S_{\langle p\rangle}}:\coind_S^{S_{\langle p\rangle}}\to E_p\coind_S^{S_{\langle p\rangle}}$ is a natural transformation.  Setting $\alpha=\res^{S_{\langle p\rangle}}_{\hat a(S\setminus\langle p\rangle)}\alpha',$ we get that the diagram (\ref{prinfilt5}) also commutes. Since $\beta$ is an isomorphism, Proposition \ref{adj1}(c) says that $\alpha'=\hat\alpha.$

Since coinduction is a fully  faithful additive functor by Proposition \ref{prel6}(b), each direct summand of $\coind^{S_{\langle p\rangle}}_S\mathbf V$ is isomorphic to $\coind^{S_{\langle p\rangle}}_S\mathbf X,$ where $\mathbf X$ is a direct summand of  $\mathbf V.$  Since  $\mathbf V,\mathbf Z\in\mathfrak A,$ no nonzero direct summand of  $\coind^{S_{\langle p\rangle}}_S\mathbf V$ or $\coind^{S_{\langle p\rangle}}_S\mathbf Z$ is full at $p.$  By Proposition \ref{prelimdif3}(d), $\hat f$ and $\hat g$ are right minimal morphisms.  By Proposition \ref{adj1}(d), $f$ and $g$ are right minimal morphisms, and we already noted that  they are proper epimorphisms. Hence, they are projective covers by Theorem \ref{projectives}(f).  Since $\beta$ is an isomorphism, so is $\alpha,$  and Proposition \ref{adj1}(a) says that $\hat\alpha=\alpha'$ is an isomorphism.  Since coinduction reflects isomorphisms by Proposition  \ref{prel6}(b), $\gamma$ is an isomorphism.  We have proved that the functor $E_p\coind_S^{S_{\langle p\rangle}}$ restricted to $\mathfrak A$ reflects isomorphisms.

(b)  This is a direct consequence of (a) and Proposition \ref{prinfilt2}(c).

(c)  In view of the way the posets $S_{\langle p\rangle}$ and $S_p$ are constructed, this follows immediately from the definition of coinduction and Definition \ref{prelimdif2}.
\end{proof}

Recall ~\cite{GR} that the poset $S$ is   \emph{representation-finite} if the category $kS\modules$ has only finitely many isomorphism classes of indecomposable modules, and $S$ is {\em finitely represented} if the cardinality $\nu(S)$ of the set of isomorphism classes of indecomposable $S$-spaces is finite. 

A characterization of finitely represented posets is given in ~\cite{Kl}, whereas a characterization and description of representation-finite posets in given in ~\cite{Lou} and ~\cite{ZaSh}.

We have the following consequence of the preceding theorem. 

\begin{cor}\label{corfilt} If $p\in S$ satisfies  $w(S\setminus\langle p\rangle)\leq 2,$ then
$$
\nu(S)=\nu(S_p)+\bigl| a\bigl(S\setminus\langle p\rangle\bigr)\bigr|+1.
$$
In particular, $S$ is finitely represented  if and only if so is $S_p.$
\end{cor}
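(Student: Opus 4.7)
The plan is to partition the isomorphism classes of indecomposable $S$-spaces into two pieces: those indecomposables lying in the subcategory $\mathfrak A$ of Theorem~\ref{prinfilt4}(a), and those that are full at $p$. Because an indecomposable cannot decompose nontrivially, an indecomposable $\mathbf V$ lies in $\mathfrak A$ precisely when $\mathbf V$ itself is not full at $p$, so the two pieces are disjoint and exhaust all iso classes of indecomposable $S$-spaces.

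For the first piece, I would invoke Theorem~\ref{prinfilt4}(a), which says that the functor $F=\res_{S_p}^{S_{\langle p\rangle}} E_p \coind_S^{S_{\langle p\rangle}}|\mathfrak A\colon\mathfrak A\to S_p\spaces$ is a representation equivalence, i.e., dense, full, and reflecting isomorphisms. A nonzero object $\mathbf V$ of $\mathfrak A$ satisfies $V(p)\subsetneq V$, so $F\mathbf V$ has nonzero ambient space; hence $F$ preserves indecomposability on $\mathfrak A$, and a standard Krull--Schmidt argument (density for surjectivity on iso classes, fullness together with reflection of isomorphisms for injectivity) shows that $F$ induces a bijection between iso classes of indecomposables in $\mathfrak A$ and in $S_p\spaces$. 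This piece therefore contributes $\nu(S_p)$ to $\nu(S)$.

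For the second piece, I would note that an indecomposable $\mathbf V$ full at $p$ is automatically full at the whole filter $\langle p\rangle$, because $p\le s$ forces $V=V(p)\subseteq V(s)\subseteq V$. Since $\langle p\rangle$ is a filter of $S$, the complement $R=S\setminus\langle p\rangle$ is an ideal, and Proposition~\ref{prel6}(f) applied to $R$ yields an equivalence between the full subcategory of $S$-spaces full at $\langle p\rangle$ and $(S\setminus\langle p\rangle)\spaces$. So iso classes of indecomposable $S$-spaces full at $p$ correspond bijectively to iso classes of indecomposable $(S\setminus\langle p\rangle)$-spaces. Since $w(S\setminus\langle p\rangle)\le 2$, Proposition~\ref{width2} forces every such space to be semisimple, so its indecomposables are the simples, and Proposition~\ref{onedim1} parametrizes them by $\mathcal A(S\setminus\langle p\rangle)=a(S\setminus\langle p\rangle)\cup\{\emptyset\}$. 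Hence this piece contributes $|a(S\setminus\langle p\rangle)|+1$ iso classes.

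Adding the two counts gives the formula, and the ``in particular'' statement follows at once because $|a(S\setminus\langle p\rangle)|+1$ is finite whenever $S$ is. The one nonmechanical step is the Krull--Schmidt bookkeeping for the representation equivalence $F$: confirming that a dense, full functor between Krull--Schmidt $k$-categories that reflects isomorphisms and sends nonzero objects of its domain to nonzero objects induces a bijection on iso classes of indecomposables. I expect this to be routine given the material already assembled in Section~\ref{prel}, so once it is in hand the rest is simply a matter of invoking the cited propositions.
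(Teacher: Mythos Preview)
Your proposal is correct and follows essentially the same route as the paper: partition the indecomposable $S$-spaces into those in $\mathfrak A$ and those full at $p$, use Theorem~\ref{prinfilt4}(a) to identify the first piece with indecomposable $S_p$-spaces, and use Proposition~\ref{prel6}(f) together with Propositions~\ref{width2} and~\ref{onedim1} to count the second piece as $|\mathcal A(S\setminus\langle p\rangle)|=|a(S\setminus\langle p\rangle)|+1$. Your extra care in noting that ``full at $p$'' forces ``full at $\langle p\rangle$'' and in spelling out why the representation equivalence gives a bijection on indecomposables simply makes explicit what the paper leaves implicit.
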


\begin{proof}
Since a representation equivalence of categories establishes a bijection between isomorphism classes of indecomposable objects, Theorem \ref{prinfilt4} implies that $\nu(S)$ equals $\nu(S_p)$ plus the cardinality of the set of isomorphism classes of indecomposable $S$-spaces full at $p$. Since $S\setminus\langle p\rangle$ is an ideal of $S,$ Proposition \ref{prel6}(f) says that the latter cardinality is  $\nu(S\setminus\langle p\rangle)$. By Proposition \ref{width2}, every indecomposable $(S\setminus\langle p\rangle)$-space is simple. In view of Notation \ref{prel2} and using the bijection between antichains and isomorphism classes of simple $(S\setminus\langle p\rangle)$-spaces established by Proposition \ref{onedim1}, we get $\nu(S\setminus\langle p\rangle)=\bigl|\mathcal A(S\setminus\langle p\rangle)\bigr|=\bigl| a(S\setminus\langle p\rangle)\bigr|+1.$
\end{proof}

Recall that the {\em Hasse diagram}  of a poset $S$ is the quiver with the set of vertices $S$ in which there is a single arrow $a\to b$ if and only if $a<b$ and no element  $c\in S $ satisfies $a< c< b;$  there are no other arrows in the Hasse diagram.

\begin{ex}\label{prinfilt6} To illustrate the differentiation with respect to a principal filter, consider the poset $S$ given by the following Hasse diagram.
$$
S:\quad 
\xymatrix@=1.5em{
    a & b & c & d &f\\
    & p\ar[ul]\ar[u]\ar[ur]\\
    & e\ar[u]\ar[uurr] & \\
    & & g\ar[ul]\ar[uuurr]}
$$   
Then the Hasse diagrams of $S_p$ is as follows.
$$
S_p:\quad
\xymatrix@=1.5em{
    a & b & c & d & f\\
    & & & d\wedge f\ar[u]\ar[ur]\\} 
$$
Note that three differentiations with respect to minimal elements (first $g$, then $e$, and finally $p$) are needed to reduce $S$ to the same poset as our $S_p$ above.
\end{ex} 

\subsection{Differentiation with respect to a principal ideal}\label{prinideal}
The constructions that follow are dual to the ones of the previous subsection.

\begin{defn}\label{prinideal1}  For any $p\in S$, the poset $S^{(p)}=(p)\cup\check{a}\bigl(S\setminus(p)\bigr)$ is a subposet of $\check a(S),$ and Propositions \ref{prel2.5}(b) and \ref{prel.5}(a) imply that $\check a(\langle p\rangle)$ is the principal filter of $S^{(p)}$ generated by $p.$  Hence
$$
S^p=S^{(p)}\setminus \check a(\langle p\rangle)=\bigl((p)\setminus\{p\}\bigr)\cup\check{a}\bigl(S\setminus[(p)\cup\langle p\rangle]\bigr)
$$ 
is an ideal of $S^{(p)}$ satisfying $\{p\}=(p)\setminus S^p.$

For the rest of this subsection we assume that  $w(S\setminus(p))\le2.$  Then Proposition \ref{prinideal2} applies to the ideals $R=(p)$ of $S$  and $J=S^p$ of $S^{(p)},$ and we  say that the functor $\res_{S^p}^{S^{(p)}}E^p\ind_S^{S^{(p)}}:S\spaces\to S^p\spaces$ suggested by Proposition \ref{prinideal2}(b) is the {\em differentiation functor}, and $S^p$ is the {\em derived poset} of $S,$ with respect to the principal ideal $(p).$
\end{defn}

For all $\mathbf U,\mathbf V\in S\spaces,$ denote by $\mathfrak T(\mathbf U,\mathbf V)$ the subset of $\Hom_{S\spaces}(\mathbf U,\mathbf V)$  consisting of all morphisms that factor through an $S$-space trivial at $p.$  Then $\mathfrak T$ is a two-sided ideal (relation) in $S\spaces.$ 

\begin{thm}\label{prinideal4}  
\begin{itemize}

\item[(a)] Denote by $\mathfrak B$ the full subcategory of $S\spaces$ determined by the $S$-spaces with no nonzero direct summand trivial at $p.$  The restriction  of the functor $\res_{S^p}^{S^{(p)}}E^p\ind_S^{S^{(p)}}:S\spaces\to S^p\spaces$ to the subcategory $\mathfrak B$ is a representation equivalence of categories $\mathfrak B\to S^p\spaces.$

\item[(b)]  The functor $\res_{S^p}^{S^{(p)}}E^p\ind_S^{S^{(p)}}$ induces an equivalence of categories $S\spaces/\mathfrak T\cong  S^p\spaces.$

\item[(c)]  For each $S$-space $\mathbf V=\bigl(V,V(s)\bigr)_{s\in S},$ we have $\res_{S^p}^{S^{(p)}}E^p\ind_S^{S^{(p)}}\mathbf V=\bigl(X,X(t)\bigr)_{t\in S^p}$ where $X=V(p),\ X(t)=V(t)\cap V(p)$ if $t\in S\setminus \langle p\rangle,$ and $X(a\vee b)=\bigl(V(a)+ V(b)\bigr)\cap V(p)$ if $a,b\in S\setminus [(p)\cup\langle p\rangle]$ and $\{p,a,b\}$ is an antichain.  If $\mathbf U=\bigl(U,U(s)\bigr)_{s\in S}$ is an $S$-space and $f:\mathbf U\to\mathbf V$ is a mophism given by a $k$-linear map $f:U\to V,$ then the morphism $\res_{S_p}^{S_{\langle p\rangle}}E_p\coind_S^{S_{\langle p\rangle}}f$ is given by the $k$-linear map $f|U(p):U(p)\to V(p).$
\end{itemize}
\end{thm}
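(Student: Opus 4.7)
The plan is to dualize the proof of Theorem \ref{prinfilt4}: induction replaces coinduction, the monomorphism of functors $\kappa_p:E^p\to 1_{S\spaces}$ replaces the epimorphism $\pi_p:1_{S\spaces}\to E_p$, injective envelopes replace projective covers, and Propositions \ref{adj1.5} and \ref{prinideal2} replace Propositions \ref{adj1} and \ref{prinfilt2} respectively. Parts (b) and (c) are then straightforward: (b) follows from (a) together with Proposition \ref{prinideal2}(c); (c) is a direct unpacking of the definition of $\ind^{S^{(p)}}_S$ in Subsection \ref{adj} and of $E^p$ in Definition \ref{prelimdif2}, using that for $A=a\vee b\in\check{a}(S\setminus[(p)\cup\langle p\rangle])$ the induced subspace at $A$ equals $V(a)+V(b)$, whose intersection with $V(p)$ gives the stated formula.

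For part (a), density is exactly Proposition \ref{prinideal2}(d). As in the proof of Theorem \ref{prinfilt4}(a), the remaining task reduces to showing that $E^p\ind^{S^{(p)}}_S:\mathfrak B\to S^{(p)}\spaces$ is full and reflects isomorphisms: here $S^p$ is an ideal of $S^{(p)}$ with complement the filter $\check{a}(\langle p\rangle)$, and every $S^{(p)}$-space of the form $E^p\ind^{S^{(p)}}_S\mathbf V$ is full at each element of $\check{a}(\langle p\rangle)$, since for $A=b_1\vee\cdots\vee b_m$ with each $b_i\ge p$ one has $p\le A$ in $S^{(p)}$, so the induced subspace at $A$ contains $V(p)$, which is the ambient space of $E^p\ind^{S^{(p)}}_S\mathbf V$; Proposition \ref{prel6}(f) then does the reduction.

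For fullness of $E^p\ind^{S^{(p)}}_S$, given $\beta:E^p\ind^{S^{(p)}}_S\mathbf V\to E^p\ind^{S^{(p)}}_S\mathbf Z$, I form the diagram in $S^{(p)}\spaces$ using the proper monomorphisms $\kappa_p(\ind^{S^{(p)}}_S\mathbf V)$ and $\kappa_p(\ind^{S^{(p)}}_S\mathbf Z)$, and restrict to the filter $\check{a}(S\setminus(p))$ of $S^{(p)}$. Because $w(S\setminus(p))\le 2$, Proposition \ref{pr3}(c) identifies the restrictions of $\ind^{S^{(p)}}_S\mathbf V$ and $\ind^{S^{(p)}}_S\mathbf Z$ as injective $\check{a}(S\setminus(p))$-spaces, so injectivity produces a lift $\alpha$ of $\beta$ making the restricted square commute. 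Proposition \ref{adj1.5}(b) upgrades $\alpha$ to $\alpha':\ind^{S^{(p)}}_S\mathbf V\to\ind^{S^{(p)}}_S\mathbf Z$ in $S^{(p)}\spaces$ with $\alpha'\kappa_p(\ind^{S^{(p)}}_S\mathbf V)=\kappa_p(\ind^{S^{(p)}}_S\mathbf Z)\beta$; Proposition \ref{prelimdif3}(e)(i) then forces $\beta=E^p\alpha'$, and full faithfulness of induction (Proposition \ref{prel6}(b)) gives $\alpha'=\ind^{S^{(p)}}_S\gamma$ for a unique $\gamma:\mathbf V\to\mathbf Z$, whence $\beta=E^p\ind^{S^{(p)}}_S\gamma$.

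For reflection of isomorphisms, suppose $\mathbf V,\mathbf Z\in\mathfrak B$ and $\beta=E^p\ind^{S^{(p)}}_S\gamma$ is an isomorphism, and set $\alpha'=\ind^{S^{(p)}}_S\gamma$ and $\alpha=\res^{S^{(p)}}_{\check{a}(S\setminus(p))}\alpha'$. Since induction is fully faithful and additive and preserves the subspace at $p$, no nonzero direct summand of $\ind^{S^{(p)}}_S\mathbf V$ or $\ind^{S^{(p)}}_S\mathbf Z$ is trivial at $p$, so by Proposition \ref{prelimdif3}(d) the corresponding $\kappa_p$ morphisms are left minimal; Proposition \ref{adj1.5}(d) transfers left minimality to the restrictions, and Theorem \ref{injectives}(d) identifies those restrictions as injective envelopes. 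Proposition \ref{adj1.5}(c) gives $\alpha'=\check\alpha$; the dual of the uniqueness-of-projective-covers argument used in the proof of Theorem \ref{prinfilt4}(a)---now uniqueness of injective envelopes---forces $\alpha$ to be an isomorphism; Proposition \ref{adj1.5}(a) lifts this to $\check\alpha=\alpha'$ being an isomorphism, and induction reflects isomorphisms (Proposition \ref{prel6}(b)), so $\gamma$ is an isomorphism. The main obstacle is the dualization bookkeeping---keeping straight throughout which morphisms are monomorphisms versus epimorphisms, whether subspaces are full or trivial at $p$, whether $\kappa_p$ or $\pi_p$ is used, and which of Propositions \ref{adj1}, \ref{adj1.5} applies---but once the dictionary is fixed, every step of the original proof carries over without essential modification.
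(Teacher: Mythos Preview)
Your proposal is correct and is precisely the detailed dualization the paper has in mind: the paper's own proof consists of the single sentence ``The proof is dual to that of Theorem \ref{prinfilt4},'' and you have carried out exactly that dualization, invoking Propositions \ref{adj1.5}, \ref{prinideal2}, \ref{prel6}(f), \ref{prelimdif3}(d)--(e), \ref{pr3}(c), and Theorem \ref{injectives}(d) in place of their counterparts in the filter case. The bookkeeping you flag (filters $\leftrightarrow$ ideals, $\kappa_p\leftrightarrow\pi_p$, injective envelopes $\leftrightarrow$ projective covers, full $\leftrightarrow$ trivial at $p$) is handled correctly throughout.
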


\begin{proof}  The proof is dual to that of Theorem \ref{prinfilt4}.
\end{proof}

\begin{cor} If $p\in S$ satisfies $w(S\setminus(p))\leq 2,$ then
$$
\nu(S)=\nu(S^p)+\bigl| a\bigl(S\setminus(p)\bigr)\bigr|+1.
$$
In particular, $S$ is  finitely represented if and only if so is $S^p.$
\end{cor}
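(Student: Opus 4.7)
The plan is to mirror the proof of Corollary \ref{corfilt} in its dual form, using Theorem \ref{prinideal4} in place of Theorem \ref{prinfilt4} and Proposition \ref{prel6}(e) in place of Proposition \ref{prel6}(f). Since a representation equivalence preserves decomposability and induces a bijection on isomorphism classes of indecomposables, Theorem \ref{prinideal4}(a) yields that $\nu(S)$ equals $\nu(S^p)$ plus the number of isomorphism classes of indecomposable $S$-spaces that lie outside $\mathfrak B$, i.e., indecomposables that are isomorphic to a nonzero direct summand of themselves that is trivial at $p$. Because $S\spaces$ is Krull-Schmidt, an indecomposable with this property is itself trivial at $p$, so the count reduces to the number of isomorphism classes of indecomposable $S$-spaces trivial at $p$.

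Next, I would identify these with indecomposable spaces over a smaller poset. By Remark \ref{adj4}, the set of elements at which an $S$-space is trivial is an ideal, so an indecomposable is trivial at $p$ if and only if it is trivial at the whole principal ideal $(p)$. The complement $R = S\setminus(p)$ is a filter of $S$, and Proposition \ref{prel6}(e) then supplies an equivalence of categories between the full subcategory of $S$-spaces trivial at $(p) = S\setminus R$ and $R\spaces = (S\setminus(p))\spaces$, via $\res^S_R$. Consequently the number in question equals $\nu(S\setminus(p))$.

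Finally, I would compute $\nu(S\setminus(p))$ using the width hypothesis. Since $w(S\setminus(p))\le 2$, Proposition \ref{width2} implies every $(S\setminus(p))$-space is semisimple, so every indecomposable is simple; by Proposition \ref{onedim1}, simple spaces are classified up to isomorphism by antichains, giving $\nu(S\setminus(p)) = |\mathcal A(S\setminus(p))| = |a(S\setminus(p))| + 1$. Assembling the three steps yields the formula, and the finite representation type equivalence follows since $|a(S\setminus(p))| + 1$ is always finite. No genuine obstacle is expected here: the argument is formally dual to that of Corollary \ref{corfilt}, and the only care needed is to invoke the \emph{filter} version (part (e)) of Proposition \ref{prel6} rather than the ideal version used before, reflecting the switch from \lq\lq full at $\langle p\rangle$" in the previous corollary to \lq\lq trivial at $(p)$" here.
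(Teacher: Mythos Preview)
Your argument is correct and is precisely the dual of the proof of Corollary \ref{corfilt}, which is exactly what the paper does (it simply writes ``Dual to the proof of Corollary \ref{corfilt}''). Your explicit identification of the needed dual ingredients---Theorem \ref{prinideal4}(a) in place of Theorem \ref{prinfilt4}(a), and Proposition \ref{prel6}(e) in place of (f)---matches the intended dualization.
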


\begin{proof}
Dual to the proof of Corollary \ref{corfilt}.
\end{proof}
                     
\subsection{Differentiation and Duality}\label{connections}
We show that the duality $\D$  commutes with the functors $E^p$ and $E_p.$  By Proposition \ref{prel7},  the duality commutes with restriction, induction, and coinduction. Hence, it  commutes with the differentiation functors with respect to a principal filter and to a principal ideal considered in the subsections \ref{prinfilt} and \ref{prinideal}, respectively. 

\begin{lem}\label{F and D}
Let $p\in S$. Then $\D E_p\cong E^p\D$, i.\,e., the following diagram commutes up to isomorphism.
$$
\begin{CD}S\spaces@>E_p>>S\spaces\\
@V \D VV@VV \D V\\
S\op\spaces@>E^p>>S\op\spaces
\end{CD}
$$
\end{lem}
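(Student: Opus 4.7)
My plan is to exhibit an explicit natural isomorphism between the two functors by using the standard linear-algebra identification of the dual of a quotient with an annihilator. For an $S$-space $\mathbf V=(V,V(s))_{s\in S}$, unpacking definitions gives $\D E_p\mathbf V=\bigl(\D(V/V(p)),\bigl((V(s)+V(p))/V(p)\bigr)^{\perp}\bigr)_{s\in S}$, while $E^p\D\mathbf V=\bigl(V(p)^{\perp},V(s)^{\perp}\cap V(p)^{\perp}\bigr)_{s\in S}$. Both are $S\op$-spaces, and the natural map to use is precomposition with the projection $\pi_{\mathbf V}\colon V\to V/V(p)$, namely $\phi_{\mathbf V}\colon \D(V/V(p))\to V(p)^{\perp}$, $g\mapsto g\circ\pi_{\mathbf V}$. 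This is well known to be a $k$-linear isomorphism onto $V(p)^{\perp}$.

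Next I would check that $\phi_{\mathbf V}$ respects the subspace structures, i.e., that it restricts to a bijection $\bigl((V(s)+V(p))/V(p)\bigr)^{\perp}\to V(s)^{\perp}\cap V(p)^{\perp}$ for every $s\in S$. This is immediate: for $g\in\D(V/V(p))$, the element $g\circ\pi_{\mathbf V}$ lies automatically in $V(p)^{\perp}$, and $g$ annihilates $(V(s)+V(p))/V(p)$ exactly when $g\circ\pi_{\mathbf V}$ vanishes on $V(s)$, that is, when $g\circ\pi_{\mathbf V}\in V(s)^{\perp}$. The converse direction uses that any functional in $V(p)^{\perp}$ factors uniquely through $\pi_{\mathbf V}$. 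Hence $\phi_{\mathbf V}$ is an isomorphism of $S\op$-spaces.

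For naturality, given a morphism $\alpha\colon \mathbf U\to\mathbf V$ in $S\spaces$ underlain by $f\colon U\to V$, the functor $E_p$ produces the induced map $\bar f\colon U/U(p)\to V/V(p)$, and so $\D E_p\alpha$ is given by $g\mapsto g\circ\bar f$. On the other hand, $E^p\D\alpha$ is the restriction of $\D f$ to $V(p)^{\perp}$, i.e.\ $h\mapsto h\circ f$. The square one must check commutes is then the identity $(g\circ\bar f)\circ\pi_{\mathbf U}=g\circ(\pi_{\mathbf V}\circ f)=(g\circ\pi_{\mathbf V})\circ f$, which is just the defining relation $\bar f\circ\pi_{\mathbf U}=\pi_{\mathbf V}\circ f$ of $\bar f$.

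I do not expect a serious obstacle: once the definitions of $E_p$ and $E^p$ (Definition \ref{prelimdif2}) and of $\D$ (Definition \ref{cat4}) are unwound, the proof reduces to the standard fact that taking $k$-linear duals sends short exact sequences of vector spaces to short exact sequences and identifies quotient duals with annihilators. The only mild care needed is in matching the subspace data, which is handled by the routine annihilator computation above.
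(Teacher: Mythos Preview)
Your proof is correct and follows essentially the same approach as the paper's: both unwind the definitions to obtain the same descriptions of $\D E_p\mathbf V$ and $E^p\D\mathbf V$, and both use the isomorphism $\D(V/V(p))\to V(p)^{\perp}$ given by precomposition with the projection $\pi_p(\mathbf V)$. Your version is in fact slightly more explicit about the naturality verification, which the paper leaves as ``easy to check''.
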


\begin{proof}
Let $\mathbf V$ be an $S$-space. Then $E_p\mathbf V$ is the $S$-space $\bigl(X,X(s)\bigr)$ with $X=V/V(p)$ and $X(s)=\bigl(V(s)+V(p)\bigr)/V(p)$, so that $\D E_p\mathbf V$ is the $S^{\mathrm{op}}$-space $\bigl(Y,Y(s)\bigr)$ with $Y=\D \bigl(V/V(p)\bigr)$ and 
$$Y(s)=\bigl[\bigl(V(s)+V(p)\bigr)/V(p)\bigr]^\perp=\{f\in\D \bigl(V/V(p)\bigr)\,|\,f\bigl[\bigl(V(s)+V(p)\bigr)/V(p)\bigr]=0\}.$$ 
On the other hand, $\D\mathbf V$ is the $S^{\mathrm{op}}$-space $(X',X'(s))$ with $X'=\D V$ and $X'(s)=V(s)^\perp$, so that $E^p\D\mathbf V$ is the $S^{\mathrm{op}}$-space $(Y',Y'(s))$ with $Y'=X'(p)=V(p)^\perp$ and $$Y'(s)=X'(s)\cap X'(p)=V(s)^\perp\cap V(p)^\perp=(V(s)+V(p))^\perp.$$ 
Note that the $k$-linear map $\varphi(\mathbf V):\D \bigl(V/V(p)\bigr)\to V(p)^\perp$ given by $[\varphi(\mathbf V)](f)=f\circ \pi_p(\mathbf V)$, where $\pi_p(\mathbf V):V\to V/V(p)$ is the natural projection, is an isomorphism of $k$-spaces. Also observe that $[\varphi(\mathbf V)]\bigl(Y(s)\bigr)=Y'(s)$ for $s\in S$, whence $\varphi(\mathbf V):\D E_p\mathbf V\to E^p\D\mathbf V$ is an isomorphism of $S\op$-spaces. It easy to check that the family $\varphi=\bigl(\varphi(\mathbf V)\bigr)_{\mathbf V\in S\spaces}$ is a natural transformation $\varphi:\D E_p\to E^p\D$. 
 \end{proof}
The following statement  imposes no restrictions on the element $p$ and thus extends ~\cite[Corollary 7.10, p. 85]{S}.
\begin{prop}\label{minmax}
Let $p\in S$. Then the following diagram commutes up to isomorphism.
$$
\begin{CD}S\spaces@>\res_{S_p}^{S_{\langle p\rangle}}E_p\coind_S^{S_{\langle p\rangle}}>>S_p\spaces\\
@V \D VV@VV \D V\\
S\op\spaces@>\res_{(S\op)^p}^{(S\op)^{(p)}}E^p\ind_{S\op}^{(S\op)^{(p)}}>>(S\op)^p\spaces
\end{CD}
$$
\end{prop}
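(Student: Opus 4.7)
The plan is to decompose the differentiation functor on the left-hand side into its three constituents (restriction, $E_p$, coinduction) and push $\D$ through them one at a time, using the three duality-commutation results already available: Proposition~\ref{prel7} (for $\res$ and for $\coind$ turning into $\ind$), and Lemma~\ref{F and D} (for $E_p$ turning into $E^p$). The only nontrivial ingredient beyond these will be a poset-theoretic identification of the two target posets.

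First I would verify the key poset identifications
\[
(S_{\langle p\rangle})^{\mathrm{op}}=(S^{\mathrm{op}})^{(p)}
\qquad\text{and}\qquad
(S_p)^{\mathrm{op}}=(S^{\mathrm{op}})^p,
\]
which will make the final step purely formal. The underlying sets agree on the nose: by definition $(p)_{S^{\mathrm{op}}}=\langle p\rangle_S$ and $\langle p\rangle_{S^{\mathrm{op}}}=(p)_S$, and a set is an antichain in $S$ iff it is an antichain in $S^{\mathrm{op}}$. What has to be checked is that the induced orders are opposite. This reduces, via the remark $\check{\mathcal A}(S)=\hat{\mathcal A}(S^{\mathrm{op}})$ made after Notation~\ref{prel4}, to comparing the defining conditions in Proposition~\ref{prel1}(c) and Proposition~\ref{prel3}(c): the quantifier on the indices $i$ and $j$ is swapped, which is precisely the effect of passing to the opposite poset. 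A direct verification works for both identifications and is the main (though routine) obstacle.

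With the poset identifications in hand, I would chain the three commutations as follows. Starting from $\D\,\res_{S_p}^{S_{\langle p\rangle}}E_p\coind_S^{S_{\langle p\rangle}}$, the first application of Proposition~\ref{prel7} rewrites the outer $\D\res$ as $\res_{(S_p)^{\mathrm{op}}}^{(S_{\langle p\rangle})^{\mathrm{op}}}\D$; then Lemma~\ref{F and D} applied to the poset $S_{\langle p\rangle}$ (which contains $p$) yields an isomorphism of functors $\D E_p\cong E^p\D$, so we may slide $\D$ past $E_p$ at the cost of replacing it by $E^p$; finally, the second half of Proposition~\ref{prel7} rewrites $\D\coind_S^{S_{\langle p\rangle}}$ as $\ind_{S^{\mathrm{op}}}^{(S_{\langle p\rangle})^{\mathrm{op}}}\D$. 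The composite has thus become
\[
\res_{(S_p)^{\mathrm{op}}}^{(S_{\langle p\rangle})^{\mathrm{op}}}\,E^p\,\ind_{S^{\mathrm{op}}}^{(S_{\langle p\rangle})^{\mathrm{op}}}\,\D ,
\]
and substituting $(S_{\langle p\rangle})^{\mathrm{op}}=(S^{\mathrm{op}})^{(p)}$ and $(S_p)^{\mathrm{op}}=(S^{\mathrm{op}})^p$ from the first step produces exactly the right-hand side of the diagram. Since every intermediate equality or isomorphism is natural in the $S$-space being acted on, the resulting isomorphism of functors is natural, which finishes the proof. I expect the only place requiring genuine care is the poset identification step; the rest is a mechanical concatenation of results already established.
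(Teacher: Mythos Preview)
Your proposal is correct and follows exactly the approach of the paper, which dispatches the proposition in one line by invoking Proposition~\ref{prel7} and Lemma~\ref{F and D}. Your write-up simply unpacks what the paper leaves implicit: the poset identifications $(S_{\langle p\rangle})^{\mathrm{op}}=(S^{\mathrm{op}})^{(p)}$ and $(S_p)^{\mathrm{op}}=(S^{\mathrm{op}})^p$, and the threefold sliding of $\D$ past $\res$, $E_p$, and $\coind$. One small remark: Proposition~\ref{prel7} as stated only gives $\D\,\ind\cong\coind\,\D$, so when you invoke it for $\D\,\coind\cong\ind\,\D$ you are tacitly using that $\D^2\cong 1$ to deduce the other direction; this is harmless but worth a word if you write it out.
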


\begin{proof}
This follows from Proposition \ref{prel7} and Lemma \ref{F and D}.
\end{proof}

\end{document}